\numberwithin{equation}{section}
\numberwithin{figure}{section}
\newtheorem{theorem}{Theorem}[section]
\newtheorem{lemma}[theorem]{Lemma}
\newtheorem{corollary}[theorem]{Corollary}
\newtheorem{definition}[theorem]{Definition}
\newtheorem{remark}[theorem]{Remark}
\theoremstyle{remark}
\newcommand\figcaption{\def\@captype{figure}\caption}
\newcommand\tabcaption{\def\@captype{table}\caption}
\DeclareMathAlphabet{\mathpzc}{OT1}{pzc}{m}{it}
\begin{document}
	\newcounter{my}
	\newenvironment{mylabel}
	{
		\begin{list}{(\roman{my})}{
				\setlength{\parsep}{-1mm}
				\setlength{\labelwidth}{8mm}
				\usecounter{my}}
		}{\end{list}}
	
	\newcounter{my2}
	\newenvironment{mylabel2}
	{
		\begin{list}{(\alph{my2})}{
				\setlength{\parsep}{-0mm} \setlength{\labelwidth}{8mm}
				\setlength{\leftmargin}{3mm}
				\usecounter{my2}}
		}{\end{list}}
	
	\newcounter{my3}
	\newenvironment{mylabel3}
	{
		\begin{list}{(\alph{my3})}{
				\setlength{\parsep}{-1mm}
				\setlength{\labelwidth}{8mm}
				\setlength{\leftmargin}{10mm}
				\usecounter{my3}}
		}{\end{list}}

	\title{\bf The log-Sobolev inequality and correlation functions for the renormalization of 1D Ising model \thanks{\noindent{\bf 2020 Mathematics Subject Classification}\quad 60H15;60K35}}
	\author[a]{Cui Kaiyuan
	}
	\author[b]{Gong Fuzhou}
	\affil[a]{Institute of Applied Mathematics, Academy of Mathematics and Systems Science, Chinese Academy of Sciences, Beijing 100190, China,({cuiky@amss.ac.cn})}
	\affil[b]{Institute of Applied Mathematics, Academy of Mathematics and Systems Science, Chinese Academy of Sciences, Beijing 100190, China,({ fzgong@amt.ac.cn})}
	\renewcommand*{\Affilfont}{\small\it}
	\renewcommand\Authands{and}
	\date{}
	\maketitle

	\vspace{-5em}
	
	\begin{center}\large

	\end{center}

	
	\begin{abstract}
		The renormalization group (RG) method is an important tool for studying critical phenomena. In this paper, we employ stochastic analysis techniques to investigate the stochastic partial differential equation (SPDE) derived by regularizing and continuousizing the discrete stochastic equation, which is a variant of stochastic quantization equation of the one dimensional (1D) Ising model. Firstly, we give the regularity estimates for the solution to SPDE. Secondly, we prove the Clark-Ocone-Haussmann formula and derive the log-Sobolev inequality up to the terminal time $T$, as well as obtain a priori form of the renormalization relation. Finally, we verify the correctness of the renormalization procedure based on the partition function, and prove that the two point correlation functions of SPDE on lattices converge to the two point correlation functions of the 1D Ising model at the stable fixed point of the RG transformation as $T\rightarrow +\infty$.\\
		
		\noindent{\bf Keywords:}~Log-Sobolev inequality; Clark-Ocone-Haussmann formula; Stochastic quantization; Renormalization; Correlation function.
	\end{abstract}
	
	\section{Introduction}\label{sec:1}
	The renormalization group (RG) method, formulated by Wilson in 1971, has had a profound influence on both mathematics and physics. Specifically, the fundamental idea behind the real space RG is the 'coarse-graining' procedure for a discrete spin system, which  connects the Gibbs measures of the system at different scales and can be described by a discrete time Markov chain from a probabilistic perspective, see \cite{Morandi2001document} for example. Motivated by the stochastic quantization proposed by Parisi and Wu \cite{PARISI1981document}, we reconstructed the renormalization  procedure using a stochastic equation. Based on the renormalization relation in the one dimensional (1D) Ising model, we derived the local Poincaré inequalities for the two point functions of the Poisson continuous version of the discrete stochastic equation in \cite{Cui2022document}. We also selected the relative entropies of the spin functions in \cite{Cui2024document} as observables, and proved that the renormalization equations grounded in these observables hold, based on the standard renormalization procedure in physics, for example see \cite{Zinn2007document}.
	
	Different from \cite{Cui2022document} and \cite{Cui2024document}, in this paper, we regularize the variant of the stochastic quantization equation by mollifying its sign function, and take the continuity limit in space as well as time to obtain a stochastic partial differential equation (SPDE). For the SPDE, we establish the log-Sobolev inequality and the Poincar\'e inequality up to the terminal time $T$. To prove them, we first derive a Clark-Ocone-Haussmann formula, following a strategy analogous to that used in \cite{Capitaine1997document, Fang1999document, Gong1998document} for proving the log-Sobolev inequalities on path and loop spaces. Guided by the constant in the log-Sobolev inequality which depends on the regularized and coupling parameters of the system as well as the terminal time $T$, and the principle that the constant does not blow up as $T\rightarrow+\infty$, we provide a priori choice of the renormalization relation. Moreover, we prove the correctness of the renormalization procedure, based on the invariance of the partition function and the obtained renormalization relation. Finally, we study the long time behavior of the two point
	correlation functions of SPDE on lattices, and prove that they converge to the two point correlation functions of the 1D Ising model at the stable fixed point of the RG transformation as $T\rightarrow +\infty$ by the Poincar\'e inequalities.
	
	
    One of the key points in investigating statistical mechanics models is proving the related log-Sobolev inequalities. Recently, there are new developments in deriving log-Sobolev inequalities for the models in statistical mechanics and  Euclidean quantum fields via alternative methods. For example, by means of log-Sobolev inequalities, Shen, Zhu and Zhu in \cite{Shen2021document,Shen2024document} proved that the correlations of a large class of observables decay exponentially for lattice Yang-Mills and Yang-Mills-Higgs fields at strong coupling regime. Bauerschmidt and coauthor in \cite{Bauerschmidt2021document} proposed a new approach based on the Pochinski flow in renormalization theory to establish the log-Sobolev inequality for the continuum sine-Gordon model. With the same approach, Bauerschmidt and coauthor in \cite{Bauerschmidt20241document} derived the log-Sobolev inequalities for the $\phi^{4}_{2}$ and $\phi^{4}_{3}$ in the entire high temperature phases, based on the known renormalization relations which are dimension-dependent divergent counterterms and a correlation inequality proven by Ding, Song and Sun \cite{Ding2023document}. Different from the above work, the result in this paper shows that a good estimate for the constant in the log-Sobolev inequality up to the terminal time $T$ can be directly used to determine the renormalization relation. Furthermore, it is worth noting that such an approach of selecting renormalization relation based on the log-Sobolev inequality has the potential to be extended to more complex models.
	
 This paper is organized as follows. In Section \ref{sec:2}, we heuristically derive a SPDE by regularizing and continuousizing the discrete stochastic equation formulated in \cite{Cui2022document,Cui2024document}. Besides, for the reader's convenience, we present some elementary concepts and estimates. In Section \ref{sec:3}, we establish the regularity estimates for the solution to SPDE. In Section \ref{sec:4}, we derive the Clark-Ocone-Haussmann formula and the log-Sobolev inequality up to the terminal time $T$ based on the integration by parts formula. In Section \ref{sec:5}, we prove the correctness of the renormalization procedure based on the partition function and study the long time behavior of the correlation functions. 
	
\section{Preliminaries}\label{sec:2}
\subsection{Model}\label{sec:2.1}
	As introduced in Section \ref{sec:1}, inspired by the idea of stochastic quantization proposed by Parisi and Wu \cite{PARISI1981document}, we reconstructed the renormalization  procedure in  \cite{Cui2022document} and \cite{Cui2024document} using a discrete stochastic equation as follows
	\begin{align}\label{eq.18}
		\phi(y_{i},t+1)&={\rm sgn}(K\tilde{\Delta }\phi(y_{i},t)-(\gamma-1-2K)\phi(y_{i},t)+\xi(y_{i},t)),
	\end{align}
	where $K,\gamma$ are coupling parameters of the equation, $\{\xi(y_{i},t)\}_{i\in\mathbb{Z},t\in\mathbb{Z}^{+}}$ are standard Gaussian random variables that are independent of each other for $t\in\mathbb{Z}^{+},y_{i}=i\in\mathbb{Z}$, and
	\begin{align*}
		\tilde{\Delta }\phi(y_{i},t):=\phi(y_{i}-1,t)+\phi(y_{i}+1,t)-2\phi(y_{i},t).
	\end{align*}
	In this section, we will heuristically regularize and continuousize the above equation to obtain a SPDE. Firstly, approximating Equation~\eqref{eq.18}~by
	\begin{align*}
		\phi(y_{i},t+1)=g_{\varepsilon}(K\tilde{\Delta }\phi(y_{i},t)-(\gamma-1-2K)\phi(y_{i},t)+B(y_{i},t+1)-B(y_{i},t)),
	\end{align*}
	where $\{B(y_{i},t)\}_{y_{i}\in\mathbb{Z}}$ is a family of standard Brownian motions that are independent of each other for different $y_{i}$, and $g_{\varepsilon}(x)$ is a continuous, increasing function converging to ${\rm sgn}(x)$ as the regularized parameter 
	$\varepsilon\rightarrow 0$. Particularly, 
	we take $g_{\varepsilon}(x)$ as follows
	\begin{align*}
		g_{\varepsilon}(x)&=\left\{
		\begin{array}{ll}
			\varepsilon x+1-\varepsilon^{2} &x\geq \varepsilon>0,\\
			\frac{x}{\varepsilon} &-\varepsilon <x< \varepsilon,\\
			\varepsilon x-1+\varepsilon^{2} &x\leq -\varepsilon<0.
		\end{array}\right.
	\end{align*}
	Obviously, $g_{\varepsilon}(x)$ is increasing and
	\begin{align*}
		g^{-1}_{\varepsilon}(x)&=\left\{
		\begin{array}{ll}
			\frac{x}{\varepsilon} +\varepsilon-\frac{1}{\varepsilon}  &x\geq 1,\\
			\varepsilon x &-1 <x< 1,\\
			\frac{x}{\varepsilon} -\varepsilon+\frac{1}{\varepsilon}  &x\leq -1.
		\end{array}\right.
	\end{align*}
    Then we get 
	$$g^{-1}_{\varepsilon}(\phi(y_{i},t+1))=K\tilde{\Delta } \phi(y_{i},t)-(\gamma-1-2K)\phi(y_{i},t)+B(y_{i},t+1)-B(y_{i},t),$$
	and the above equation can be rewritten as
	\begin{align*}
		\phi(y_{i},t+1)-\phi(y_{i},t)&=K\tilde{\Delta } \phi(y_{i},t)-(\gamma-2K)\phi(y_{i},t)+\phi(y_{i},t+1)\\
		&-g^{-1}_{\varepsilon}(\phi(y_{i},t+1))+B(y_{i},t+1)-B(y_{i},t).
	\end{align*}
	Replacing the time step with $h$, we get
	\begin{align*}
		\phi(y_{i},t+h)-\phi(y_{i},t)&=K\tilde{\Delta } \phi(y_{i},t)-(\gamma-2K)\phi(y_{i},t)+\phi(y_{i},t+h)\\
		&-g^{-1}_{\varepsilon}(\phi(y_{i},t+h))+B(y_{i},t+h)-B(y_{i},t).
	\end{align*}
Given the standard mollifier
	\begin{equation*}
		\tilde{J}(x)=\left\{\begin{array}{ll}
			\hat{J} e^{-\frac{1}{1-x^{2}}} & |x|< 1,  \\
			0 & |x|\geq 1,
		\end{array}\right.
	\end{equation*}
	where $\hat{J}>0$ is a normalization constant such that $\int_{\mathbb{R}}\tilde{J}(x)dx=1$.
	Let $U_{\varepsilon,\delta}(x):=-\tilde{J}^{\delta}\ast g^{-1}_{\varepsilon}(x)$, where   $\tilde{J}^{\delta}(x):=\frac{1}{\delta}\tilde{J}(\frac{x}{\delta})$ and $\delta>0$. Let 
	\begin{align*}
			\delta_{1}(x):=&\frac{\hat{J}}{\delta}\int_{-\delta}^{x-1}e^{-\frac{1}{1-(y/\delta)^{2}}}dy, -\delta \leq x-1\leq \delta,\\
			\delta_{2}(x):=&\frac{\hat{J}}{\delta}\int_{-\delta}^{x+1}e^{-\frac{1}{1-(y/\delta)^{2}}}dy, -\delta \leq x+1\leq \delta,
	\end{align*}
	and we can calculate explicitly that 
	\begin{align*}
		U^{\prime}_{\varepsilon,\delta}(x)&=\left\{
		\begin{array}{ll}
			-\frac{1}{\varepsilon} &x\geq 1+\delta,\\
			 -\varepsilon(1-\delta_{1}(x))-\frac{1}{\varepsilon}\delta_{1}(x) &1-\delta <x< 1+\delta ,\\
			-\varepsilon &-1+\delta \leq x\leq  1-\delta ,\\
			-\varepsilon  \delta_{2}(x)-\frac{1}{\varepsilon}(1-\delta_{2}(x)) &-1-\delta <x< -1+\delta ,\\
			-\frac{1}{\varepsilon} &x\leq -1-\delta,
		\end{array}\right.
	\end{align*}
	which means $-1/\varepsilon\leq U^{\prime}_{\varepsilon,\delta}(x)\leq -\varepsilon$ for $\varepsilon\leq 1$.	In this manner, Equation~\eqref{eq.18}~can be regularized as 
	\begin{align*}
		\phi(y_{i},t+h)-\phi(y_{i},t)&=K\tilde{\Delta } \phi(y_{i},t)-(\gamma-2K)\phi(y_{i},t)+\phi(y_{i},t+h)\\
		&+U_{\varepsilon,\delta}(\phi(y_{i},t+h))+B(y_{i},t+h)-B(y_{i},t).
	\end{align*}
	 Furthermore, taking the time step $h\rightarrow 0$ and the spatial continuum limit $\tilde{\Delta }\rightarrow\Delta $, we get the regularized and continuousized version of Equation~\eqref{eq.18}
	\begin{align*}
		\frac{\partial}{\partial t}\phi(y,t)=K\Delta \phi(y,t)-(\gamma-2K)\phi(y,t)+(\phi(y,t)+U_{\varepsilon,\delta}(\phi(y,t))+\dot{\mathcal{W}}^{\rho}(y,t),
	\end{align*}	
	where $\dot{\mathcal{W}}^{\rho}(y,t)$ is a colored noise that approximating the space-time white noise, and the correlation function between spatial points $y_{i}$ and $y_{j}$ converges to the Dirac delta function $\delta(y_{i}-y_{j})$ as $\rho\rightarrow0$.	
	
	Let us consider the model with periodic boundary conditions, and the equation will be defined on a bounded open interval $(-M,M)\subset\mathbb{R}$. Taking the kernel function as follows 
		\begin{align*}
		\rho_{M}(x,y)&:=\frac{1}{\sqrt{8M\pi\rho}}\sum_{n=-\infty}^{+\infty} \left[e^{-\frac{(4Mn+y-x)^{2}}{16M^{2}\rho}}-e^{-\frac{(4Mn+y+x)^{2}}{16M^{2}\rho}}\right], 		
	\end{align*}
	where $\rho>0$ and $x,y\in[-M,M]$. Obviously, the kernel function $\rho_{M}(x,y)$ converges to the Dirac delta function $\delta(x-y)$ as $\rho\rightarrow0$. Denote by $W^{\rho_{M}}(y,t)$ the process satisfying
\begin{align*}
	dW^{\rho_{M}}(y,t)&:=\int_{-M}^{M}\rho_{M}(y,z)W_{M}(dz,dt),
\end{align*}
where $W_{M}(z,t)$ is the Brownian sheet on $[-M,M]\times[0,T]$. Let $C_{c}^{\infty}((-M,M))$ denote the set of all infinitely differentiable real-valued functions on $(-M,M)$ with compact support. For $\hat{u}\in C_{c}^{\infty}((-M,M))$, we consider the following semi-linear SPDE defined on the bounded open interval $(-M,M)\subset\mathbb{R}$
	\begin{equation}\label{eq:2.3}
		\left\{
		\begin{aligned}
			&d Y(y,t)=K\Delta Y(y,t)dt+2K Y(y,t)dt+b(Y(y,t))dt+ dW^{\rho_{M}}(y,t),\ t> 0,\\
			&Y(y,0)=\hat{u}(y),\forall{y\in (-M,M)},\\
			&Y(-M,t)=Y(M,t)=0,
		\end{aligned}
		\right.
	\end{equation}
	where 
	\begin{align}\label{eq:drift}
		b(x)&:=U_{\varepsilon,\delta}(x)+(1-\gamma)x,		
	\end{align}
	and the parameter $\gamma>1$ is arbitrary, as noted in \cite{Cui2022document}.	For $0\leq x,y\leq 1$ and $\rho>0$, let $W^{\rho}(x,t)$ denote the process satisfying
	\begin{align*}
		dW^{\rho}(x,t)&:=\int_{0}^{1}\tilde{\rho}(x,y)W(dy,dt),
	\end{align*}
	where $W(y,t)$ is the Brownian sheet on $[0,1]\times[0,T]$ and
	\begin{align}\label{eq:2.5}
		\tilde{\rho}(x,y):=\frac{1}{\sqrt{4\pi\rho}}\sum_{n=-\infty}^{+\infty}\left[e^{-\frac{(2n+y-x)^{2}}{4\rho}}-e^{-\frac{(2n+y+x)^{2}}{4\rho}}\right], x,y\in[0,1].
	\end{align}
	Then
	\begin{align*}
		E[W^{\rho}(x,t)W^{\rho}(y,t)]=&E\left[\int_{0}^{t}\int_{0}^{1}\tilde{\rho}(x,z)W(dz,ds)\int_{0}^{t}\int_{0}^{1}\tilde{\rho}(y,z)W(dz,ds)\right]\\
		=&t\int_{0}^{1}\tilde{\rho}(x,z)\tilde{\rho}(z,y)dz=\mathcal{K}(x,y)t,
	\end{align*}
	where
		\begin{align}\label{eq:noisecova}
		\mathcal{K}(x,y):=\int_{0}^{1}\tilde{\rho}(x,z)\tilde{\rho}(z,y)dz.
	\end{align}
	Fixing the terminal time $T>0$, under the spatial scaling transformation
	\begin{align*}
		x=\frac{y+M}{2M}\in(0,1),
	\end{align*}
	the equation $X(x,t)=Y(y,t)$ can be rewritten on the interval $(0,1)$ as follows
	\begin{equation}\label{eq:2.4}
		\left\{
		\begin{aligned}
			d X(x,t)&=\frac{K}{4M^{2}}\Delta X(x,t)dt+2K X(x,t)dt+b(X(x,t))dt+ dW^{\rho}(x,t),\\
			X(x,0)&=u(x),\forall{x\in (0,1)},\\
			X(0,t)&=X(1,t)=0,t\in[0,T],
		\end{aligned}
		\right.
	\end{equation}
	where $u(x):=\hat{u}(2Mx-M)$. In this paper, we shall establish quantitative estimates for the solution to SPDE~\eqref{eq:2.4} and investigate its long time behavior using the log-Sobolev inequality and the Poincar\'e inequality. 
	\subsection{Elementary concepts and estimates}\label{sec:2.2}
	In this section, we first provide some useful properties of functions $\tilde{\rho}(x,y)$ and $\mathcal{K}(x,y)$.
	\begin{lemma}(see Lemma 1.3.1 in \cite{Dalang2024book}).\label{lem:equiva}
		The function $\tilde{\rho}(x,y)$ defined in Equation~\eqref{eq:2.5}~has the following equivalent expression
		\begin{align}\label{eq:kerneleq}
			\tilde{\rho}(x,y)=\sum_{k=1}^{+\infty}2\sin(k\pi y)\sin(k\pi x)e^{-k^{2}\pi^{2}\rho}=\frac{1}{\sqrt{4\pi\rho}}\sum_{k=-\infty}^{+\infty}\left[e^{-\frac{(2k+y-x)^{2}}{4\rho}}-e^{-\frac{(2k+y+x)^{2}}{4\rho}}\right],
		\end{align}
		for $0\leq x,y\leq 1$ and $\rho>0$. Then there exist the following estimates
		\begin{equation}\label{eq:color}
			\begin{split}
				&\vert\tilde{\rho}(x_{1},x_{2})\vert \leq \frac{1}{\sqrt{\pi\rho}}e^{-\frac{(x_{2}-x_{1})^{2}}{4\rho}}+\frac{\pi^{\frac{3}{2}}\rho^{\frac{1}{2}}}{6},x_{1},x_{2}\in(0,1),\\
				&\vert\mathcal{K}(x_{1},x_{2}) \vert\leq
				\frac{1}{\sqrt{2\pi\rho}}e^{-\frac{(x_{2}-x_{1})^{2}}{8\rho}}+\frac{\sqrt{2}\pi^{\frac{3}{2}}\rho^{\frac{1}{2}}}{6},x_{1},x_{2}\in(0,1),\\
			&\lim_{\rho\rightarrow 0}\sqrt{4\pi\rho}\vert\tilde{\rho}(x,x)\vert=1, \lim_{\rho\rightarrow 0}\sqrt{8\pi\rho} \vert\mathcal{K}(x,x) \vert=1,x\in(0,1).
		\end{split}
\end{equation}
	\end{lemma}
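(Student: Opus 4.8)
The plan is to read \eqref{eq:kerneleq} as a statement about the heat kernel of the circle $\mathbb{R}/2\mathbb{Z}$ and to reduce everything to elementary Gaussian-tail estimates. Set $\vartheta_\rho(\theta):=\tfrac12+\sum_{k\ge1}e^{-k^2\pi^2\rho}\cos(k\pi\theta)$. Using $2\sin(k\pi x)\sin(k\pi y)=\cos(k\pi(y-x))-\cos(k\pi(y+x))$ the first expression in \eqref{eq:kerneleq} becomes $\tilde{\rho}(x,y)=\vartheta_\rho(y-x)-\vartheta_\rho(y+x)$, while Poisson summation applied to the Gaussian gives $\vartheta_\rho(\theta)=\tfrac1{\sqrt{4\pi\rho}}\sum_{n\in\mathbb{Z}}e^{-(\theta-2n)^2/(4\rho)}$, which reproduces the second expression; this identity is exactly Lemma 1.3.1 of \cite{Dalang2024book}, so I would just cite it. Note that $\vartheta_\rho$ is $2$-periodic, even and strictly positive, and that $\tilde{\rho}$ is the Dirichlet heat kernel on $(0,1)$.

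For the bound on $\tilde{\rho}$, since $\vartheta_\rho\ge0$ the decomposition $\tilde{\rho}(x_1,x_2)=\vartheta_\rho(x_2-x_1)-\vartheta_\rho(x_1+x_2)$ gives $|\tilde{\rho}(x_1,x_2)|\le\max\bigl(\vartheta_\rho(x_2-x_1),\vartheta_\rho(x_1+x_2)\bigr)$. By $2$-periodicity and evenness, $\vartheta_\rho(x_2-x_1)=\vartheta_\rho(|x_2-x_1|)$ and $\vartheta_\rho(x_1+x_2)=\vartheta_\rho(d)$ with $d:=\mathrm{dist}(x_1+x_2,2\mathbb{Z})\in[0,1]$; a one-line check (equivalent to $\max(x_1,x_2)\le1$) shows $|x_2-x_1|\le d$. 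So it suffices to bound $\vartheta_\rho(a)$ for $a\in[0,1]$ by $\tfrac1{\sqrt{\pi\rho}}e^{-b^2/(4\rho)}+\tfrac{\pi^{3/2}}{6}\rho^{1/2}$ whenever $0\le b\le a$. I would split $\vartheta_\rho(a)$ into the $n=0$ Gaussian $\tfrac1{\sqrt{4\pi\rho}}e^{-a^2/(4\rho)}$, the closest reflected term (argument $2-a\ge a\ge b$), and the rest; the first two both fit inside $\tfrac1{\sqrt{\pi\rho}}e^{-b^2/(4\rho)}$ since $\tfrac1{\sqrt{4\pi\rho}}=\tfrac12\tfrac1{\sqrt{\pi\rho}}$, while the remaining reflected terms have arguments $\ge2$, so by $e^{-t}\le1/t$ together with $\sum_{m\ge1}m^{-2}=\pi^2/6$ and $\sum_{m\ge1}(2m-1)^{-2}=\pi^2/8$ their sum is at most $(\tfrac23\pi^2-4)\rho$, and dividing by $\sqrt{4\pi\rho}$ bounds it by $\tfrac{\pi^{3/2}}{6}\rho^{1/2}$. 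This gives the first estimate in \eqref{eq:color}.

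The bound on $\mathcal{K}$ is then free: by orthogonality of $\{\sin(k\pi\cdot)\}$ on $[0,1]$ (the semigroup property of the Dirichlet heat kernel), $\mathcal{K}(x_1,x_2)=\int_0^1\tilde{\rho}(x_1,z)\tilde{\rho}(z,x_2)\,dz=\sum_{k\ge1}2\sin(k\pi x_1)\sin(k\pi x_2)e^{-2k^2\pi^2\rho}$, i.e.\ $\mathcal{K}$ equals $\tilde{\rho}$ with $\rho$ replaced by $2\rho$; substituting $\rho\mapsto2\rho$ into the first estimate of \eqref{eq:color} yields the second one verbatim. For the diagonal limits, put $x_1=x_2=x$ into the image expansion: $\sqrt{4\pi\rho}\,\tilde{\rho}(x,x)=\sum_{n\in\mathbb{Z}}e^{-n^2/\rho}-\sum_{n\in\mathbb{Z}}e^{-(n+x)^2/\rho}$, and as $\rho\to0$ the first sum converges to $1$ (only $n=0$ survives) while the second converges to $0$ (since $x\in(0,1)$ forces $\min_{n\in\mathbb{Z}}|x-n|=\min(x,1-x)>0$), giving $\lim_{\rho\to0}\sqrt{4\pi\rho}\,|\tilde{\rho}(x,x)|=1$; the statement for $\mathcal{K}$ follows from $\sqrt{8\pi\rho}=\sqrt{4\pi(2\rho)}$ and the same substitution.

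I expect the only delicate point to be the near-boundary regime, where $x_1,x_2$ are both close to $0$ (or both close to $1$): there $\vartheta_\rho(x_2-x_1)$ and $\vartheta_\rho(x_1+x_2)$ are each of size $\rho^{-1/2}$ and nearly cancel in $\tilde{\rho}$, so one must not bound $\tilde{\rho}$ termwise in absolute value. Using positivity of $\vartheta_\rho$ to sandwich $\tilde{\rho}$ between $-\vartheta_\rho(x_1+x_2)$ and $\vartheta_\rho(x_2-x_1)$, together with the comparison $|x_2-x_1|\le\mathrm{dist}(x_1+x_2,2\mathbb{Z})$, is precisely what controls this cancellation; after that everything is routine, and obtaining the clean constant $\pi^{3/2}/6$ (rather than merely some $C\rho^{1/2}$) is just a matter of organising the reflected-image sum as above.
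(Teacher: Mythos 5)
Your proposal is correct and follows essentially the same route as the paper: cite Dalang's Lemma 1.3.1 for the identity, isolate the $n=0$ Gaussian in the image sum and bound the reflected tail by $O(\sqrt{\rho})$ via $e^{-t}\leq 1/t$ and $\sum_{n\geq1}n^{-2}=\pi^{2}/6$, get the $\mathcal{K}$ bound from the orthogonality computation and the substitution $\rho\mapsto 2\rho$, and read off the diagonal limits from the image form. Your sandwich of $\tilde{\rho}$ between $-\vartheta_{\rho}(x_{1}+x_{2})$ and $\vartheta_{\rho}(x_{2}-x_{1})$ combined with $|x_{2}-x_{1}|\leq \mathrm{dist}(x_{1}+x_{2},2\mathbb{Z})$ is just a cleaner packaging of the paper's comparison $0<x_{2}-x_{1}\leq 2-x_{1}-x_{2}$ used in its termwise regrouping of the doubly infinite sum.
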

	\begin{proof}
		Consider the homogeneous heat equation on [0,1] with Dirichlet boundary conditions
		\begin{equation}\label{eq:heatdiri}
			\begin{split}
			\left\{
			\begin{aligned}
				&\partial_{t}\tilde{u}(x,t)=\Delta \tilde{u}(x,t),\forall{x\in (0,1)},\\
				&\tilde{u}(0,t)=\tilde{u}(1,t)=0,\\
				&\tilde{u}(x,0)=\tilde{\phi}(x),
			\end{aligned}
			\right.
			\end{split}
		\end{equation}
		where $\tilde{\phi}\in L^{2}([0,1])$ is the initial condition. Then the solution to the above equation can be expanded using the eigenfunctions of the Laplacian operator under Dirichlet boundary conditions
		\begin{align*}
			\tilde{u}(x,t)=&\sum_{k=1}^{+\infty}2\int_{0}^{1}\tilde{\phi}(y)\sin(k\pi y)dy \sin(k\pi x)e^{-k^{2}\pi^{2}t}=\int_{0}^{1}G(t;x,y)\tilde{\phi}(y)dy,
		\end{align*}
		where
		\begin{align*}
			G(t;x,y):=2\sum_{k=1}^{+\infty}\sin(k\pi x)\sin(k\pi y)e^{-k^{2}\pi^{2}t},
		\end{align*}
		is the Green's function of Equation~\eqref{eq:heatdiri}.
		According to Lemma 1.3.1 in \cite{Dalang2024book}, the Green’s function $G(t;x,y)$ has the equivalent expression
		\begin{align*}
			G(t;x,y)=&\sum_{k=1}^{+\infty}2\sin(k\pi y)\sin(k\pi x)e^{-k^{2}\pi^{2}t}
			=\frac{1}{\sqrt{4\pi t}}\sum_{k=-\infty}^{+\infty}\left[e^{-\frac{(2k+y-x)^{2}}{4t}}-e^{-\frac{(2k+y+x)^{2}}{4t}}\right].
		\end{align*}
		Hence for $0\leq x,y\leq 1$ and $\rho>0$
		 \begin{align*}
			\tilde{\rho}(x,y)=\sum_{k=1}^{+\infty}2\sin(k\pi y)\sin(k\pi x)e^{-k^{2}\pi^{2}\rho}=\frac{1}{\sqrt{4\pi\rho}}\sum_{k=-\infty}^{+\infty}\left[e^{-\frac{(2k+y-x)^{2}}{4\rho}}-e^{-\frac{(2k+y+x)^{2}}{4\rho}}\right].
		\end{align*}
		Without loss of generality, let $0< x_{1}< x_{2}< 1$. Then
		\begin{align*}
			\vert \tilde{\rho}(x_{1},x_{2})\vert&\leq\frac{1}{\sqrt{4\pi\rho}}\left(e^{-\frac{(x_{2}-x_{1})^{2}}{4\rho}}-e^{-\frac{(x_{1}+x_{2})^{2}}{4\rho}}\right)+\frac{1}{\sqrt{4\pi\rho}}\sum_{n=1}^{+\infty}\left[e^{-\frac{(2n+x_{2}-x_{1})^{2}}{4\rho}}-e^{-\frac{(2n+x_{1}+x_{2})^{2}}{4\rho}}\right]\\
			&+\frac{1}{\sqrt{4\pi\rho}}\sum_{n=-1}^{-\infty}\left[e^{-\frac{(2n+x_{1}+x_{2})^{2}}{4\rho}}-e^{-\frac{(2n+x_{2}-x_{1})^{2}}{4\rho}}\right]\\
			&\leq \frac{1}{\sqrt{4\pi\rho}}e^{-\frac{(x_{2}-x_{1})^{2}}{4\rho}}+\frac{1}{\sqrt{4\pi\rho}}\sum_{n=1}^{+\infty}e^{-\frac{(2n+x_{2}-x_{1})^{2}}{4\rho}}+\frac{1}{\sqrt{4\pi\rho}}\sum_{n=2}^{+\infty}e^{-\frac{(2n-x_{1}-x_{2})^{2}}{4\rho}}\\
			&+\frac{1}{\sqrt{4\pi\rho}}\left[e^{-\frac{(2-x_{1}-x_{2})^{2}}{4\rho}}-e^{-\frac{(2-x_{2}+x_{1})^{2}}{4\rho}}\right].
		\end{align*}
		Because $0<x_{2}-x_{1}\leq 2-x_{1}-x_{2}$ and $x_{1}+x_{2}<2$, so
		\begin{equation}\label{eq:rhoest}
			\begin{split}
				\vert \tilde{\rho}(x_{1},x_{2})\vert
				&\leq \frac{1}{\sqrt{4\pi\rho}}e^{-\frac{(x_{2}-x_{1})^{2}}{4\rho}}+\frac{1}{\sqrt{4\pi\rho}}\sum_{n=1}^{+\infty}e^{-\frac{n^{2}}{\rho}}+\frac{1}{\sqrt{4\pi\rho}}\sum_{n=2}^{+\infty}e^{-\frac{(n-1)^{2}}{\rho}}\\
				&+\frac{1}{\sqrt{4\pi\rho}}e^{-\frac{(x_{2}-x_{1})^{2}}{4\rho}}-\frac{1}{\sqrt{4\pi\rho}}e^{-\frac{1}{\rho}}\\
				&=\frac{1}{\sqrt{\pi\rho}}e^{-\frac{(x_{2}-x_{1})^{2}}{4\rho}}+\frac{1}{\sqrt{\pi\rho}}\sum_{n=1}^{+\infty}e^{-\frac{n^{2}}{\rho}}-\frac{1}{\sqrt{4\pi\rho}}e^{-\frac{1}{\rho}}\\
				&\leq\frac{1}{\sqrt{\pi\rho}}e^{-\frac{(x_{2}-x_{1})^{2}}{4\rho}}+\frac{1}{\sqrt{\pi\rho}}\sum_{n=1}^{+\infty}\frac{\rho}{n^{2}+\rho}\leq \frac{1}{\sqrt{\pi\rho}}e^{-\frac{(x_{2}-x_{1})^{2}}{4\rho}}+\frac{\pi^{\frac{3}{2}}\rho^{\frac{1}{2}}}{6}.
			\end{split}
		\end{equation}
	By the definition in Equation~\eqref{eq:noisecova}~
		\begin{align*}
			\mathcal{K}(x,y)=\int_{0}^{1}\tilde{\rho}(x,z)\tilde{\rho}(z,y)dz,
		\end{align*}
		then
		\begin{equation}\label{eq:cova}
			\begin{split}
				\mathcal{K}(x,y)=&\int_{0}^{1}\sum_{k=1}^{+\infty}2\sin(k\pi z) \sin(k\pi x)e^{-k^{2}\pi^{2}\rho}\sum_{m=1}^{+\infty}2\sin(m\pi z) \sin(m\pi y)e^{-m^{2}\pi^{2}\rho}dz\\
				=&\sum_{k=1}^{+\infty}\sum_{m=1}^{+\infty}4\int_{0}^{1}\sin(k\pi z) \sin(m\pi z)dz\sin(k\pi x) \sin(m\pi y)e^{-k^{2}\pi^{2}\rho}e^{-m^{2}\pi^{2}\rho}\\
				=&\sum_{k=1}^{+\infty}2\sin(k\pi x) \sin(k\pi y)e^{-2k^{2}\pi^{2}\rho}\\
				=&\frac{1}{\sqrt{8\pi\rho}}\sum_{n=-\infty}^{+\infty}\left[e^{-\frac{(2n+y-x)^{2}}{8\rho}}-e^{-\frac{(2n+y+x)^{2}}{8\rho}}\right],
			\end{split}
		\end{equation}
		where the last equality follows from Equation~\eqref{eq:kerneleq}.
		Replacing $\rho$ by $2\rho$ in~\eqref{eq:rhoest}, we obtain 
		\begin{align*}
			|\mathcal{K}(x_{1},x_{2})|\leq \frac{1}{\sqrt{2\pi\rho}}e^{-\frac{(x_{2}-x_{1})^{2}}{8\rho}}+\frac{\sqrt{2}\pi^{\frac{3}{2}}\rho^{\frac{1}{2}}}{6},
		\end{align*}
		and by Equation~\eqref{eq:cova}, we know that
		\begin{align*}
			\lim_{\rho\rightarrow 0}\sqrt{8\pi\rho} \mathcal{K}(x,x)=1,x\in(0,1).
		\end{align*}
		The rest of the proof can be derived directly.
	\end{proof}
	\begin{lemma}\label{lem:lowerbound}
			Given an integer $n\geq 1$, let $\hat{\mathcal{K}}$ be the $n\times n$ matrix with elements $\hat{\mathcal{K}}_{ij}:=\mathcal{K}(x_{i},x_{j})$ for $1\leq i,j\leq n$ and $x_{i},x_{j}\in(0,1)$. Denote by $\hat{c}(\rho)$ the smallest eigenvalue of $\hat{\mathcal{K}}$ such that 
		\begin{align*}
		\hat{\mathcal{K}}\geq \hat{c}(\rho) \text{I}_{d},
		\end{align*}
		where $\text{I}_{d}$ is the identity matrix, and for two symmetric matrices $A$ and $B$, $A\geq B$ means that $B-A$ is non-negative definite. Then for any $i=1,\cdots,n$, we know that
		\begin{equation}\label{eq:lowerbound}
			\begin{split}
				\hat{c}(\rho)&\geq \frac{1}{\sqrt{8\pi\rho}}-\frac{1}{\sqrt{8\pi\rho}}e^{-\frac{x_{i}^{2}}{8\rho}}+ \frac{1}{\sqrt{8\pi\rho}}\sum_{k\neq 0}\left[e^{-\frac{k^{2}}{8\rho}}-e^{-\frac{(k+x_{i})^{2}}{8\rho}}\right]\\
				&- \frac{1}{\sqrt{8\pi\rho}}\sum_{j\neq i}\sum_{k=-\infty}^{+\infty}\left[e^{-\frac{(2k+x_{j}-x_{i})^{2}}{8\rho}}-e^{-\frac{(2k+x_{j}+x_{i})^{2}}{8\rho}}\right].
			\end{split}
		\end{equation}
	\end{lemma}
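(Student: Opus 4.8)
\section*{Proof proposal}

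The plan is to combine a Gershgorin‑type lower bound for the smallest eigenvalue with the explicit Gaussian series for the entries of $\hat{\mathcal K}$. For a unit vector $v=(v_1,\dots,v_n)\in\mathbb R^n$ I would use $|v_iv_j|\le\tfrac12(v_i^2+v_j^2)$ to estimate
\[
v^{\top}\hat{\mathcal K}v=\sum_i v_i^2\,\mathcal K(x_i,x_i)+\sum_{i\neq j}v_iv_j\,\mathcal K(x_i,x_j)\ \ge\ \sum_i v_i^2\Big(\mathcal K(x_i,x_i)-\sum_{j\neq i}|\mathcal K(x_i,x_j)|\Big),
\]
so that $\hat c(\rho)\ge \mathcal K(x_i,x_i)-\sum_{j\neq i}|\mathcal K(x_i,x_j)|$ for every $i$. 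Because $\tilde\rho(\cdot,\cdot)=G(\rho;\cdot,\cdot)$ is the Dirichlet heat kernel on $[0,1]$, it is nonnegative, hence $\mathcal K(x_i,x_j)=\int_0^1\tilde\rho(x_i,z)\tilde\rho(z,x_j)\,dz\ge0$ and the absolute values can be removed. Substituting \eqref{eq:cova}, i.e.\ $\mathcal K(x_i,x_j)=\frac{1}{\sqrt{8\pi\rho}}\sum_k\big[e^{-(2k+x_j-x_i)^2/(8\rho)}-e^{-(2k+x_j+x_i)^2/(8\rho)}\big]$, turns $-\sum_{j\neq i}\mathcal K(x_i,x_j)$ into the last double sum of \eqref{eq:lowerbound} verbatim.

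It then remains to show that $\mathcal K(x_i,x_i)$ is at least the first three terms on the right of \eqref{eq:lowerbound}. Regrouping the $k=0$ summand, those terms equal $\frac{1}{\sqrt{8\pi\rho}}\sum_{k\in\mathbb Z}\big[e^{-k^2/(8\rho)}-e^{-(k+x_i)^2/(8\rho)}\big]$, which by the image‑sum formula is $2\,G(8\rho;x_i,x_i)$, whereas $\mathcal K(x_i,x_i)=G(2\rho;x_i,x_i)$ by \eqref{eq:cova}; so the diagonal estimate is the semigroup comparison $G(2\rho;x_i,x_i)\ge 2\,G(8\rho;x_i,x_i)$. I would prove this by the method of images. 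Let $q_s$ be the heat kernel on the circle $\mathbb R/\mathbb Z$; then $G(t;x,x)=\tfrac12\big(q_{t/4}(0)-q_{t/4}(x)\big)$, and splitting the defining Gaussian sum of $q_{4s}$ into even‑ and odd‑index parts gives $q_{4s}(u)=\tfrac12\big(q_s(u/2)+q_s(u/2+\tfrac12)\big)$. Since $G(t;x,x)$ is invariant under $x\mapsto1-x$ one may assume $x_i\in(0,\tfrac12]$, and these two identities combine to
\[
G(2\rho;x_i,x_i)-2\,G(8\rho;x_i,x_i)=\tfrac12\Big(\big[q_{\rho/2}(x_i/2)-q_{\rho/2}(x_i)\big]+\big[q_{\rho/2}(\tfrac{x_i+1}{2})-q_{\rho/2}(\tfrac12)\big]\Big).
\]
Both brackets are $\ge0$ because $q_s$ is even about $0$ and about $\tfrac12$ and is nonincreasing on $[0,\tfrac12]$; this monotonicity follows from the maximum principle applied to $\partial_u q_s$ on $(0,\tfrac12)$, which vanishes at the endpoints, solves the heat equation, and is negative there for small $s$ (where $q_s\approx g_s$, the Gaussian density). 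Collecting terms, $\hat c(\rho)\ge \mathcal K(x_i,x_i)-\sum_{j\neq i}\mathcal K(x_i,x_j)\ge 2\,G(8\rho;x_i,x_i)-\sum_{j\neq i}\mathcal K(x_i,x_j)$, which is exactly \eqref{eq:lowerbound}.

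The delicate step is this diagonal comparison: comparing the series $\sum_k2\sin^2(k\pi x_i)e^{-2k^2\pi^2\rho}$ and $\sum_k4\sin^2(k\pi x_i)e^{-8k^2\pi^2\rho}$ term by term fails for small $\rho$ (the $k=1$ terms point the wrong way), so the heat‑kernel folding identity together with the unimodality of $q_s$ genuinely seem to be needed. Everything else — the Gershgorin bound, the positivity of $\tilde\rho$, and matching the off‑diagonal part with \eqref{eq:cova} — is routine bookkeeping given Lemma~\ref{lem:equiva}.
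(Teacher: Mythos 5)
Your proposal is correct, and it is in fact more careful than the paper's own argument. The paper's proof is a one-liner: the Gershgorin circle theorem gives $\hat c(\rho)\ge\mathcal K(x_i,x_i)-\sum_{j\neq i}|\mathcal K(x_i,x_j)|$, and this is then asserted to \emph{equal} the right-hand side of \eqref{eq:lowerbound} by substituting \eqref{eq:cova}. Your quadratic-form derivation of the Gershgorin bound and your removal of the absolute values via positivity of the Dirichlet kernel reproduce exactly this core, and your off-diagonal matching is identical to the paper's. Where you genuinely diverge is the diagonal term: you correctly observe that \eqref{eq:cova} gives $\mathcal K(x_i,x_i)=G(2\rho;x_i,x_i)$, whose image-sum expansion carries exponents $x_i^{2}/(2\rho)$, $k^{2}/(2\rho)$, $(k+x_i)^{2}/(2\rho)$, whereas the first three terms of \eqref{eq:lowerbound} as printed (with $8\rho$ in the exponents) sum to $2G(8\rho;x_i,x_i)$; these coincide only in the limit $\rho\to0$, so the paper's claimed equality does not hold literally — it is consistent with the $2\rho$ exponents actually used later in the diagonal estimate of Theorem \ref{thm:4.8}, suggesting a typo in the lemma. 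Taking the statement at face value, your extra step $G(2\rho;x,x)\ge 2G(8\rho;x,x)$ is precisely what is needed, and your proof of it (folding $q_{4s}(u)=\tfrac12[q_s(u/2)+q_s(u/2+\tfrac12)]$ plus evenness and unimodality of the circle heat kernel) is sound; the only soft spot is the maximum-principle justification that $q_s$ is nonincreasing on $[0,\tfrac12]$, which is a classical fact and can also be obtained more directly, e.g.\ from the monotonicity in $t$ of $\sqrt{4\pi t}\,G(t;x,x)$, the probability that a Brownian bridge from $x$ to $x$ of duration $t$ stays in $(0,1)$, via Brownian scaling. In short: same Gershgorin skeleton as the paper, but you buy a proof of the inequality exactly as stated, at the cost of a nontrivial heat-kernel comparison that the paper's "equality" silently skips.
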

	\begin{proof}
			For any $1\leq i\leq n$, by the Gersgorin Circle Theorem in linear algebra and Equation~\eqref{eq:cova}~above, we get
		\begin{align*}
			\hat{c}(\rho)\geq\mathcal{K}(x_{i},x_{i})-\sum_{j\neq i}|\mathcal{K}(x_{i},x_{j})|
			&= \frac{1}{\sqrt{8\pi\rho}}-\frac{1}{\sqrt{8\pi\rho}}e^{-\frac{x_{i}^{2}}{8\rho}}+ \frac{1}{\sqrt{8\pi\rho}}\sum_{k\neq 0}\left[e^{-\frac{k^{2}}{8\rho}}-e^{-\frac{(k+x_{i})^{2}}{8\rho}}\right]\\
			&- \frac{1}{\sqrt{8\pi\rho}}\sum_{j\neq i}\sum_{k=-\infty}^{+\infty}\left[e^{-\frac{(2k+x_{j}-x_{i})^{2}}{8\rho}}-e^{-\frac{(2k+x_{j}+x_{i})^{2}}{8\rho}}\right].
		\end{align*}
	\end{proof}	
	In what follows, we shall denote by $L^{2}(\mathbb{S}^{1})$ the space defined as
	\begin{align*}
		L^{2}(\mathbb{S}^{1}):=\{\phi\in L^{2}([0,1])\mid \phi(0)=\phi(1)=0\}.
	\end{align*}
Let	$e_{k}(x)=\sqrt{2}\sin(k\pi x)$ for $k\geq 1$, the family $\{e_{k}\}_{k\geq 1}$ is a complete orthonormal basis of $L^{2}(\mathbb{S}^{1})$. Define the operator $Q_{\rho}^{\frac{1}{2}}$ on $L^{2}(\mathbb{S}^{1})$ 
	\begin{align}\label{eq:ope}
		(Q_{\rho}^{\frac{1}{2}}\phi)(x):=\int_{0}^{1}\tilde{\rho}(x,y)\phi(y)dy, \phi(y)\in L^{2}(\mathbb{S}^{1}).
	\end{align}
By Equation~\eqref{eq:kerneleq}, we know that 	
	\begin{align*}
		(Q_{\rho}^{\frac{1}{2}}e_{k})(x)=\int_{0}^{1}\tilde{\rho}(x,y)e_{k}(y)dy=&\int_{0}^{1}\sum_{r=1}^{+\infty}2\sin(r\pi y)\sin(r\pi x)e^{-r^{2}\pi^{2}\rho}\sqrt{2}\sin(k\pi y)dy\\
		=&e^{-k^{2}\pi^{2}\rho}\sqrt{2}\sin(k\pi x)=e^{-k^{2}\pi^{2}\rho}e_{k}(x).
	\end{align*}
	Therefore, $\{e_{k}\}_{k\geq 1}$ are also the eigenfunctions of the operator $Q_{\rho}^{\frac{1}{2}}$, associated with the eigenvalues $\{\lambda_{k}(\rho)=e^{-k^{2}\pi^{2}\rho}\}_{k\geq1}$. Obviously,
	$Q_{\rho}^{\frac{1}{2}}$ is a Hilbert-Schmidt operator on $L^{2}(\mathbb{S}^{1})$ and
	\begin{align*}
		(Q_{\rho}\phi)(x)=(Q^{\frac{1}{2}}_{\rho}Q^{\frac{1}{2}}_{\rho}\phi)(x)&=\int_{0}^{1}\tilde{\rho}(x,z)\int_{0}^{1}\tilde{\rho}(z,y)\phi(y)dydz\\
		&=\int_{0}^{1}\int_{0}^{1}\tilde{\rho}(x,z)\tilde{\rho}(z,y)dz\phi(y)dy=\int_{0}^{1}\mathcal{K}(x,y)\phi(y)dy, \phi(y)\in L^{2}(\mathbb{S}^{1}).
	\end{align*}
	Denote by $\mathcal{H}_{\rho}$ the Cameron-Martin space
	\begin{align*}
		\mathcal{H}_{\rho}:=\{\phi\in L^{2}(\mathbb{S}^{1})\mid\Vert \phi\Vert^{2}_{\mathcal{H}_{\rho}}:=\Vert Q^{-1/2}_{\rho}  \phi\Vert^{2}_{L^{2}}<+\infty\}.
	\end{align*}
		Let $(\Omega,\mathcal{F}_{t},\mathcal{F},P)$ be a filtered probability space satisfying the usual hypothesis. Considering a sequence of independent real Brownian motion $B^{k}(t)$ defined on $(\Omega,\mathcal{F}_{t},\mathcal{F},P)$, it is well known that $W^{\rho}(x,t)$ can be expressed explicitly as a series 
		\begin{align}\label{eq:1.5}
			W^{\rho}(x,t)= \sum_{k=1}^{+\infty}\lambda_{k}(\rho) e_{k}(x) B^{k}(t)=\sum_{k=1}^{+\infty}e^{-k^{2}\pi^{2}\rho} \sqrt{2}\sin(k\pi x)B^{k}(t).
		\end{align}
		The above expansion can facilitate the calculations of stochastic convolutions in subsequent sections.
		
	Moreover, let $\partial_{x}\tilde{\rho}(x,y)$ denote the partial derivative of $\tilde{\rho}(x,y)$ with respect to $x$. Define
	\begin{align}\label{eq:partialrho}
		(Q_{\partial\rho}^{\frac{1}{2}}\phi)(x):=\int_{0}^{1}\partial_{x}\tilde{\rho}(x,y)\phi(y)dy, \phi(y)\in L^{2}(\mathbb{S}^{1}).
	\end{align}
	Obviously, for any positive integer $k\geq 1$, we have
		\begin{align*}
		(Q_{\partial\rho}^{\frac{1}{2}}e_{k})(x)=\sqrt{2}\int_{0}^{1}\partial_{x}\tilde{\rho}(x,y)\sin(k\pi y)dy=&2\sqrt{2}\sum_{l=1}^{+\infty}e^{-l^{2}\pi^{2}\rho}l\pi \cos(l\pi x)\int_{0}^{1}\sin(l\pi y)\sin(k\pi y)dy\\
		=&\sqrt{2}e^{-k^{2}\pi^{2}\rho}k\pi \cos(k\pi x).
	\end{align*}
	Therefore, the operator $Q_{\partial\rho}^{\frac{1}{2}}: L^{2}(\mathbb{S}^{1})\rightarrow L^{2}([0,1])$ is a Hilbert-Schmidt operator with norm
		\begin{align*}
		\Vert Q_{\partial\rho}^{\frac{1}{2}}\Vert_{2}=:&\sqrt{\sum_{k=1}^{+\infty}\Vert Q_{\partial\rho}^{\frac{1}{2}}e_{k}\Vert^{2}_{L^{2}}}.
	\end{align*}
	The following lemma provides the estimates for operators $Q_{\rho}$ and $Q_{\partial\rho}^{\frac{1}{2}}$.
		\begin{lemma}\label{cor:trest}
				For $\rho\in(0,1)$, we have
		\begin{equation}\label{eq:trespt}
			\begin{split}
				\frac{e^{-2\pi^{2}}}{4\pi^{2}+1}\frac{1}{\sqrt{\rho}}&\leq Tr(Q_{\rho})=\sum_{k=1}^{+\infty} e^{-2k^{2}\pi^{2}\rho}\leq\frac{1}{12\rho},\\
				\Vert Q_{\partial\rho}^{\frac{1}{2}}\Vert^{2}_{2}&=\frac{1}{2}\sum_{k=1}^{+\infty} k^{2}\pi^{2}e^{-2k^{2}\pi^{2}\rho}\leq\frac{1}{48\rho^{2}}.
			\end{split}
		\end{equation}
		\end{lemma}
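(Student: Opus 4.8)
The plan is to reduce both statements to the explicit spectral expansions of $Q_\rho$ and $Q_{\partial\rho}^{1/2}$ already recorded above, and then to control the resulting single–index series by elementary pointwise inequalities for $e^{-x}$ together with $\sum_{k\ge1}k^{-2}=\pi^2/6$. The only genuinely non-routine ingredient will be the lower bound on $Tr(Q_\rho)$.

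\textbf{Step 1 (the two identities).} Since $Q_\rho=Q_\rho^{1/2}Q_\rho^{1/2}$ and $\{e_k\}_{k\ge1}$ diagonalizes $Q_\rho^{1/2}$ with eigenvalues $\lambda_k(\rho)=e^{-k^2\pi^2\rho}$, the operator $Q_\rho$ is trace class and $Tr(Q_\rho)=\sum_{k\ge1}\langle Q_\rho e_k,e_k\rangle_{L^2}=\sum_{k\ge1}e^{-2k^2\pi^2\rho}$. For the Hilbert–Schmidt norm I use the formula $(Q_{\partial\rho}^{1/2}e_k)(x)=\sqrt2\,e^{-k^2\pi^2\rho}k\pi\cos(k\pi x)$ derived above; with $\int_0^1\cos^2(k\pi x)\,dx=\tfrac12$ this gives $\|Q_{\partial\rho}^{1/2}e_k\|_{L^2}^2=k^2\pi^2e^{-2k^2\pi^2\rho}$, and summing over $k\ge1$ produces the stated series for $\|Q_{\partial\rho}^{1/2}\|_2^2$.

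\textbf{Step 2 (the upper bounds).} For the trace, apply $e^{-x}\le x^{-1}$ ($x>0$) with $x=2k^2\pi^2\rho$, so $e^{-2k^2\pi^2\rho}\le(2k^2\pi^2\rho)^{-1}$ and hence $Tr(Q_\rho)\le\frac{1}{2\pi^2\rho}\sum_{k\ge1}k^{-2}=\frac{1}{12\rho}$. For the Hilbert–Schmidt norm, apply instead $x^2e^{-x}\le\max_{x\ge0}x^2e^{-x}=4e^{-2}\le1$, i.e. $e^{-x}\le x^{-2}$, again with $x=2k^2\pi^2\rho$; this bounds each term by $k^2\pi^2e^{-2k^2\pi^2\rho}\le\frac{1}{4k^2\pi^2\rho^2}$, and summation against $\sum_{k\ge1}k^{-2}=\pi^2/6$ gives the stated $\rho^{-2}$ bound. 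Both bounds are uniform in $\rho>0$.

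\textbf{Step 3 (the lower bound, the hard part).} Since $x\mapsto e^{-2\pi^2\rho x^2}$ is positive and decreasing on $[1,\infty)$, comparing the series to the integral gives $Tr(Q_\rho)\ge\int_1^\infty e^{-2\pi^2\rho x^2}\,dx$; the substitution $u=\pi\sqrt{2\rho}\,x$ turns this into $\frac{1}{\pi\sqrt{2\rho}}\int_{\pi\sqrt{2\rho}}^{\infty}e^{-u^2}\,du$, and as $\rho\in(0,1)$ we have $\pi\sqrt{2\rho}\le\pi\sqrt2$, so the tail is at least $\int_{\pi\sqrt2}^{\infty}e^{-u^2}\,du$. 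The remaining point is the Gaussian tail inequality $\int_a^\infty e^{-u^2}\,du\ge\frac{a}{2a^2+1}e^{-a^2}$; taking $a=\pi\sqrt2$ (so $a^2=2\pi^2$, $2a^2+1=4\pi^2+1$) yields $Tr(Q_\rho)\ge\frac{1}{\pi\sqrt{2\rho}}\cdot\frac{\pi\sqrt2}{4\pi^2+1}e^{-2\pi^2}=\frac{e^{-2\pi^2}}{4\pi^2+1}\frac{1}{\sqrt\rho}$. To establish the tail inequality I would set $h(u)=\frac{u}{2u^2+1}e^{-u^2}$, note $h(0)=\lim_{u\to\infty}h(u)=0$, and check $h'(u)=e^{-u^2}\frac{1-4u^2-4u^4}{(2u^2+1)^2}\ge-e^{-u^2}$, which is equivalent to the identity $1-4u^2-4u^4+(2u^2+1)^2=2\ge0$; integrating $h'\ge-e^{-u^2}$ over $[a,\infty)$ gives $h(a)\le\int_a^\infty e^{-u^2}\,du$, as needed. (An alternative valid only for small $\rho$ would be the Jacobi theta transformation $\sum_{k\in\mathbb{Z}}e^{-2\pi^2\rho k^2}=(2\pi\rho)^{-1/2}\sum_{k\in\mathbb{Z}}e^{-k^2/(2\rho)}$, but it degenerates near $\rho=1$, whereas the integral-comparison argument is uniform on $(0,1)$.)
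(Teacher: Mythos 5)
Your proposal is correct and follows essentially the same route as the paper: the same spectral identities, the bounds $e^{-x}\le x^{-1}$ and $e^{-x}\le x^{-2}$ for the two upper estimates, and the same series-to-integral comparison plus a Gaussian tail bound (which the paper merely invokes and you additionally verify via the auxiliary function $h(u)=\frac{u}{2u^{2}+1}e^{-u^{2}}$) for the lower bound on $Tr(Q_{\rho})$. No gaps; all constants check out.
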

		\begin{proof}
			According to~\eqref{eq:kerneleq}~and~\eqref{eq:ope}~, we have
			\begin{align*}
		 Tr(Q_{\rho})=\sum_{k=1}^{+\infty}\langle Q_{\rho}e_{k},e_{k}\rangle_{L^{2}}=\sum_{k=1}^{+\infty} e^{-2k^{2}\pi^{2}\rho}\leq\sum_{k=1}^{+\infty}\frac{1}{2k^{2}\pi^{2}\rho}=\frac{1}{12\rho}.
			\end{align*}
		Because $\rho\in(0,1)$, we get
			\begin{align*}
			 Tr(Q_{\rho})=\sum_{k=1}^{+\infty} e^{-2k^{2}\pi^{2}\rho}\geq&\sum_{k=1}^{+\infty}\int_{k}^{k+1}e^{-2x^{2}\pi^{2}\rho}dx=\int_{1}^{+\infty}e^{-2x^{2}\pi^{2}\rho}dx=\frac{1}{2\pi\sqrt{\rho}}\int_{2\pi\sqrt{\rho}}^{+\infty}e^{-\frac{x^{2}}{2}}dx\\
			 \geq&\frac{1}{2\pi\sqrt{\rho}}\int_{2\pi}^{+\infty}e^{-\frac{x^{2}}{2}}dx\geq \frac{1}{\sqrt{\rho}}\frac{e^{-2\pi^{2}}}{4\pi^{2}+1}.
			\end{align*}
			Note that $e^{x}\geq x^{2}$ for $x\in[1,+\infty)$, then
			\begin{align*}
				\Vert Q_{\partial\rho}^{\frac{1}{2}}\Vert^{2}_{2}=&\sum_{k=1}^{+\infty}\langle Q_{\partial\rho}^{\frac{1}{2}}e_{k},Q_{\partial\rho}^{\frac{1}{2}}e_{k}\rangle_{L^{2}}
				=\frac{1}{2}\sum_{k=1}^{+\infty}k^{2}\pi^{2} e^{-2k^{2}\pi^{2}\rho}\leq\sum_{k=1}^{+\infty}\frac{k^{2}\pi^{2}}{8k^{4}\pi^{4}\rho^{2}}=\frac{1}{48\rho^{2}}.
			\end{align*}
		\end{proof}
	
	\section{Regularity for the solution to SPDE}\label{sec:3}
	In this section, we shall analyze the regularity of the solution to SPDE~\eqref{eq:2.4}. The following lemma is a fundamental tool in analysis.
	\begin{lemma}(Gronwall inequality; see Theorem 1 in \cite{Dragomir2003document}).\label{lem:3.2}
		For fixed $T >0$, let $g(t)$ and $h(t)$ be real continuous functions defined on $[0,T]$. If there is a continuous function $l(t)\geq 0$ such that for $t\in[0,T]$
		\begin{align*}
			g(t)\leq h(t)+\int_{0}^{t}l(s)g(s)ds,
		\end{align*}
		then
		\begin{align*}
			g(t)\leq h(t)+\int_{0}^{t}h(\tau)l(\tau)e^{\int_{\tau}^{t}l(s)ds}d\tau.
		\end{align*}
	\end{lemma}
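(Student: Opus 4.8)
The plan is to reduce the integral inequality to a first-order linear differential inequality and integrate it with the standard integrating factor. First I would set $G(t) := \int_0^t l(s) g(s)\, ds$, which is continuously differentiable on $[0,T]$ since $l$ and $g$ are continuous, with $G(0) = 0$ and $G'(t) = l(t) g(t)$. Multiplying the hypothesis $g(t) \le h(t) + G(t)$ through by $l(t) \ge 0$ (this is the one place the sign condition on $l$ is needed) gives the pointwise differential inequality
\[
G'(t) \le l(t) h(t) + l(t) G(t), \qquad t \in [0,T].
\]

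Next I would introduce the integrating factor $\mu(t) := \exp\!\bigl(-\int_0^t l(s)\, ds\bigr)$, which is smooth, strictly positive, and satisfies $\mu'(t) = -l(t)\mu(t)$. Multiplying the preceding inequality by $\mu(t) > 0$ and collecting terms yields
\[
\frac{d}{dt}\bigl(\mu(t) G(t)\bigr) = \mu(t)\bigl(G'(t) - l(t) G(t)\bigr) \le \mu(t) l(t) h(t).
\]
Integrating from $0$ to $t$ and using $G(0) = 0$ gives
\[
\mu(t) G(t) \le \int_0^t \mu(\tau)\, l(\tau)\, h(\tau)\, d\tau.
\]

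Finally, dividing by $\mu(t)$ and noting that $\mu(t)/\mu(\tau) = \exp\!\bigl(\int_\tau^t l(s)\, ds\bigr)$, I obtain
\[
G(t) \le \int_0^t h(\tau)\, l(\tau)\, \exp\!\Bigl(\int_\tau^t l(s)\, ds\Bigr) d\tau,
\]
and plugging this back into $g(t) \le h(t) + G(t)$ produces exactly the asserted bound. I do not expect any real obstacle: the argument is elementary, and the only thing to watch is the bookkeeping of the integrating factor so that the exponent in the final estimate comes out as $\int_\tau^t l(s)\, ds$ and not $\int_0^t$ or $\int_0^\tau$. An alternative, if one wanted to avoid the differential formulation altogether, would be to iterate the inequality $g \le h + \int_0^{\cdot} l\, g$ into itself $n$ times and sum the resulting series of iterated kernels, which converges to the same exponential factor; but the integrating-factor route above is shorter and cleaner given that $G$ is automatically $C^1$ here.
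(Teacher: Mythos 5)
Your argument is correct. Note that the paper does not prove this lemma at all---it is quoted directly from Theorem 1 of the cited Gronwall-inequality monograph---so there is no in-paper proof to compare against; your integrating-factor derivation is the standard one and closes the gap cleanly: setting $G(t)=\int_0^t l(s)g(s)\,ds$, using $l\ge 0$ to pass from $g\le h+G$ to $G'\le lh+lG$, and integrating $\frac{d}{dt}\bigl(\mu(t)G(t)\bigr)\le \mu(t)l(t)h(t)$ with $\mu(t)=\exp\bigl(-\int_0^t l(s)\,ds\bigr)$ gives exactly the stated bound, with no extra hypotheses (in particular no sign condition on $g$ or $h$) beyond those in the lemma. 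One bookkeeping slip: after dividing by $\mu(t)$ the relevant ratio is $\mu(\tau)/\mu(t)=\exp\bigl(\int_\tau^t l(s)\,ds\bigr)$, not $\mu(t)/\mu(\tau)$ as you wrote; your final displayed inequality is nevertheless the correct one, so this is only a typo. Your alternative remark about iterating the inequality (Picard-type iteration of the kernel) would also work and is the route usually taken when $g$ is merely bounded measurable rather than continuous, but here continuity makes the differential argument the shortest path.
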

	
	\begin{theorem}\label{thm:3.2}
		Suppose that $X(x,t)$ is the solution to SPDE~\eqref{eq:2.4}, and let $X(t):=X(\cdot,t)$. Then we have the following estimate
		\begin{align*}
			E[\Vert X(t) \Vert^{2}_{L^{2}}]\leq \Vert u\Vert _{L^{2}}^{2}e^{C_{1}t}+\frac{Tr(Q_{\rho})}{C_{1}}(e^{C_{1}t}-1), \ t\in(0,T),
		\end{align*}
		where
		\begin{align*}
			C_{1}:=2(1-\gamma+\frac{1}{\varepsilon}+2K).
		\end{align*}
	\end{theorem}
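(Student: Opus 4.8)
The plan is to run a standard $L^2$-energy estimate and to close it with the Gronwall inequality of Lemma~\ref{lem:3.2}. Since $U_{\varepsilon,\delta}$ has derivative confined to $[-1/\varepsilon,-\varepsilon]$, the drift $b$ in \eqref{eq:drift} is globally Lipschitz, so SPDE~\eqref{eq:2.4} is well posed and $t\mapsto E\|X(t)\|_{L^2}^2$ is finite; I would make the It\^o computation below rigorous either through the spectral Galerkin truncations of $X$ (project onto the first $n$ modes $e_1,\dots,e_n$, where the ordinary It\^o formula applies, then let $n\to\infty$) or by directly invoking the It\^o formula for the square of the $L^2$-norm of a mild solution.

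First I would apply It\^o's formula to $\|X(t)\|_{L^2}^2$. The only contribution to the quadratic variation is the stochastic convolution $\int_0^{\cdot}dW^{\rho}$, whose bracket is $Tr(Q_\rho)\,t$ (finite by Lemma~\ref{cor:trest}), giving
\begin{align*}
\|X(t)\|_{L^2}^2 = {}& \|u\|_{L^2}^2+\int_0^t\Big(\tfrac{2K}{4M^2}\langle X(s),\Delta X(s)\rangle_{L^2}+4K\|X(s)\|_{L^2}^2+2\langle X(s),b(X(s))\rangle_{L^2}+Tr(Q_\rho)\Big)\,ds\\
& {}+2\int_0^t\langle X(s),dW^{\rho}(s)\rangle_{L^2}.
\end{align*}
Then I would bound the drift terms one at a time. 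Integration by parts with the Dirichlet boundary condition gives $\langle X,\Delta X\rangle_{L^2}=-\|\partial_xX\|_{L^2}^2\le0$, so (using $K>0$) the Laplacian term is non-positive and may be discarded. Since $U_{\varepsilon,\delta}$ is odd, hence $U_{\varepsilon,\delta}(0)=0$, the derivative bound $|U'_{\varepsilon,\delta}|\le1/\varepsilon$ yields $|U_{\varepsilon,\delta}(r)|\le|r|/\varepsilon$ and therefore
\[
r\,b(r)=r\,U_{\varepsilon,\delta}(r)+(1-\gamma)r^2\le\Big(\tfrac1\varepsilon+1-\gamma\Big)r^2,\qquad r\in\mathbb{R},
\]
so that $\langle X,b(X)\rangle_{L^2}\le(\tfrac1\varepsilon+1-\gamma)\|X\|_{L^2}^2$. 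Taking expectations — the stochastic integral being a mean-zero martingale, once again justified at the Galerkin level using $\sup_{[0,T]}E\|X\|_{L^2}^2<\infty$ — I would obtain
\[
E\|X(t)\|_{L^2}^2\le\|u\|_{L^2}^2+Tr(Q_\rho)\,t+C_1\int_0^tE\|X(s)\|_{L^2}^2\,ds,\qquad C_1=2\Big(1-\gamma+\tfrac1\varepsilon+2K\Big).
\]

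To finish, I would apply Lemma~\ref{lem:3.2} with $g(t)=E\|X(t)\|_{L^2}^2$, $h(t)=\|u\|_{L^2}^2+Tr(Q_\rho)\,t$ and $l\equiv C_1$, and compute $\int_0^th(\tau)C_1e^{C_1(t-\tau)}\,d\tau$ explicitly; adding $h(t)$ back, the two elementary integrals combine to leave exactly $\|u\|_{L^2}^2e^{C_1t}+\frac{Tr(Q_\rho)}{C_1}(e^{C_1t}-1)$, which is the claimed bound. The only genuinely delicate step is the first one — the rigorous It\^o formula for $\|X(t)\|_{L^2}^2$ and the mean-zero property of the noise integral — which is handled routinely by the Galerkin scheme together with a uniform second-moment bound; everything after that is elementary. (I am using $K>0$ and, for Lemma~\ref{lem:3.2} to apply verbatim, $C_1>0$; should $C_1\le0$ one argues instead with the integrating factor $e^{-C_1t}$ and the stated formula still holds.)
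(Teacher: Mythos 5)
Your proof is correct and follows essentially the same route as the paper: It\^o's formula for $\Vert X(t)\Vert_{L^2}^2$, discarding the nonpositive Dirichlet--Laplacian term, bounding $\langle X,b(X)\rangle_{L^2}\le(1-\gamma+\tfrac1\varepsilon)\Vert X\Vert_{L^2}^2$ (the paper leaves the bound $|U_{\varepsilon,\delta}(r)|\le|r|/\varepsilon$ implicit, while you justify it via oddness of $U_{\varepsilon,\delta}$), taking expectations, and closing with the Gronwall lemma, which yields exactly the stated constant $C_1$ and bound. Your added remarks on Galerkin justification of the It\^o step and on the sign of $C_1$ (the paper simply chooses $\varepsilon$ small so that $C_1>0$) only make the same argument more careful.
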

	\begin{proof}
		By It\^{o} formula, 
		\begin{align*}
			d\Vert X(t)\Vert _{L^{2}}^{2}&=2\langle X(t), dX(t)\rangle_{L^{2}}+Tr(Q_{\rho})dt.
		\end{align*}
		Then
		\begin{align*}
			\Vert X(t)\Vert _{L^{2}}^{2}
			&=\Vert u\Vert _{L^{2}}^{2}+Tr(Q_{\rho})t+2\int_{0}^{t}\langle X(s), b(X(s))\rangle_{L^{2}}ds+4K\int_{0}^{t}\Vert X(s)\Vert _{L^{2}}^{2}ds\\
			&+2\frac{K}{4M^{2}}\int_{0}^{t}\langle X(s), \Delta X(s)\rangle_{L^{2}}ds+2\int_{0}^{t}\langle X(s), dW^{\rho}(s)\rangle_{L^{2}}.
		\end{align*}
		Note that
		\begin{align*}
			2\frac{K}{4M^{2}}\int_{0}^{t}\langle X(s), \Delta X(s)\rangle_{L^{2}}ds=-\frac{K}{2M^{2}}\int_{0}^{t}\Vert \partial_{x} X(s)\Vert _{L^{2}}^{2}ds.
		\end{align*}
		Then
		\begin{align*}
			\Vert X(t)\Vert _{L^{2}}^{2}
			&=\Vert u\Vert _{L^{2}}^{2}+Tr(Q_{\rho})t+2\int_{0}^{t}\langle X(s), b(X(s))\rangle_{L^{2}}ds+4K\int_{0}^{t}\Vert X(s)\Vert _{L^{2}}^{2}ds\\
			&-\frac{K}{2M^{2}}\int_{0}^{t}\Vert \partial_{x} X(s)\Vert _{L^{2}}^{2}ds+2\int_{0}^{t}\langle X(s), dW^{\rho}(s)\rangle_{L^{2}}\\
			&\leq \Vert u\Vert _{L^{2}}^{2}+Tr(Q_{\rho})t+2\int_{0}^{t}\langle X(s), b(X(s))\rangle_{L^{2}}ds+4K\int_{0}^{t}\Vert X(s)\Vert _{L^{2}}^{2}ds\\
			&+2\int_{0}^{t}\langle X(s), dW^{\rho}(s)\rangle_{L^{2}}
			,
		\end{align*}
		and
		\begin{align*}
			E[\Vert X(t)\Vert _{L^{2}}^{2}]
			&\leq \Vert u\Vert _{L^{2}}^{2}+Tr(Q_{\rho})t+2\int_{0}^{t}E[\langle X(s), b(X(s))\rangle_{L^{2}}]ds+4K\int_{0}^{t}E[\Vert X(s)\Vert _{L^{2}}^{2}]ds\\
			&\leq \Vert u\Vert _{L^{2}}^{2}+Tr(Q_{\rho})t+\int_{0}^{t}2(1-\gamma  + 1/\varepsilon+2K)E[\Vert X(s)\Vert _{L^{2}}^{2}]ds\\
			&=\Vert u\Vert _{L^{2}}^{2}+Tr(Q_{\rho})t+\int_{0}^{t}C_{1}E[\Vert X(s)\Vert _{L^{2}}^{2}]ds.
		\end{align*}
		Since the regularized parameter $\varepsilon$ will tend to 0, we can choose $\varepsilon>0$
		to be sufficiently small such that $C_{1}>0$. From the Gronwall's inequality in Lemma \ref{lem:3.2}, we know that
		\begin{align*}
			E[\Vert X(t)\Vert _{L^{2}}^{2}]
			&\leq \Vert u\Vert _{L^{2}}^{2}+Tr(Q_{\rho})t+C_{1}\int_{0}^{t}E[\Vert X(s)\Vert _{L^{2}}^{2}]ds\\
			&\leq \Vert u\Vert _{L^{2}}^{2}e^{C_{1}t}+\frac{Tr(Q_{\rho})}{C_{1}}(e^{C_{1}t}-1), \ t\in(0,T).
		\end{align*}

	\end{proof}
	To establish the estimates for $\partial_{x}X(x,t)$, we first derive the equation it satisfies.
		\begin{theorem}\label{thm:61}
		Suppose that $X(x,t)$ is the solution to SPDE~\eqref{eq:2.4}~. Then for $t\in[0,T]$, the map $x\longmapsto X(x,t)$ is smooth with respect to $x$ almost surely, and the partial derivative  $v(x,t):=\partial_{x}X(x,t)$ satisfies the following SPDE	\begin{align}\label{eq:6.2}
			\left\{
			\begin{aligned}
				dv(x,t)&=\frac{K}{4M^{2}}\Delta  v(x,t)dt+ 2K v(x,t)dt+b^{\prime}(X(x,t))v(x,t)dt+dW^{\partial\rho}(x,t),\\
				v(x,0)&=\partial_{x}u(x),\forall{x\in (0,1)},
			\end{aligned}
			\right.
		\end{align}
		where
		\begin{align*}
			dW^{\partial\rho}(x,t):=\int_{0}^{1} \partial_{x}\tilde{\rho}(x,y)dW(y,t). 
		\end{align*}

	\end{theorem}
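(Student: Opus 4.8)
The plan is to obtain the spatial regularity of $X$ by a parabolic bootstrap, and then to differentiate the equation in $x$, the latter carried out most cleanly against compactly supported test functions. Throughout write $A:=\frac{K}{4M^{2}}\Delta+2K$ for the shifted, rescaled Dirichlet Laplacian on $(0,1)$, so that $Ae_{k}=\mu_{k}e_{k}$ with $\mu_{k}:=2K-\frac{K}{4M^{2}}k^{2}\pi^{2}\le 2K$, and the mild formulation of \eqref{eq:2.4} reads
\begin{align*}
X(\cdot,t)=e^{tA}u+\int_{0}^{t}e^{(t-s)A}b(X(\cdot,s))\,ds+\int_{0}^{t}e^{(t-s)A}\,dW^{\rho}(s).
\end{align*}
The key structural input, inherited from the construction in Section \ref{sec:2.1}, is that the drift $b$ in \eqref{eq:drift} is $C^{\infty}(\mathbb{R})$, odd, with $b(0)=0$, and with all derivatives of order $\ge1$ bounded: $U_{\varepsilon,\delta}=-\tilde{J}^{\delta}\ast g^{-1}_{\varepsilon}$ is a mollification of a piecewise-linear odd function, $(g^{-1}_{\varepsilon})'$ is bounded, and $(g^{-1}_{\varepsilon})''$ is a finite combination of Dirac masses at $\pm1$, so for $m\ge2$ the function $U^{(m)}_{\varepsilon,\delta}$ is, up to a constant, a difference of translates of $(\tilde{J}^{\delta})^{(m-2)}$ and hence bounded; consequently $b$ is globally Lipschitz, $b'$ is Lipschitz, and $\phi\mapsto b(\phi)$ maps each Sobolev space into itself with at most linear growth of the norm.

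For the first step, that $x\mapsto X(x,t)$ is a.s.\ smooth, I would work in the scale $H^{m}$ spanned by $\{e_{k}\}$, on which $\|e^{tA}\|_{H^{m}\to H^{m}}\le e^{2Kt}$. The initial term $e^{tA}u$ is smooth because $u=\hat{u}(2M\cdot-M)\in C_{c}^{\infty}((0,1))$; the stochastic convolution is $Z(\cdot,t)=\sum_{k\ge1}e^{-k^{2}\pi^{2}\rho}\big(\int_{0}^{t}e^{(t-s)\mu_{k}}\,dB^{k}(s)\big)e_{k}(\cdot)$, whose one-dimensional Gaussian coefficients have variance at most $te^{4Kt}$, so the damping factor $e^{-k^{2}\pi^{2}\rho}$ together with the summability of $\sum_{k}k^{2m}e^{-2k^{2}\pi^{2}\rho}$ (of the kind estimated in Lemma \ref{cor:trest}) and a Kolmogorov/Borel--Cantelli argument give $Z\in C([0,T];H^{m})$ a.s.\ for every $m$. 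A Picard iteration started at $X^{(0)}=e^{tA}u+Z$ converges in $C([0,T];H^{1})$ by the Lipschitz bound on $b$, an $H^{1}$ energy estimate, and Gronwall's inequality (Lemma \ref{lem:3.2}); inserting the resulting $H^{1}\hookrightarrow C^{0}$ bound into the composition (Moser) estimates for $b$ and iterating on $m$ upgrades this to $X\in C([0,T];H^{m})$ a.s.\ for all $m$, whence $x\mapsto X(x,t)$ is smooth a.s. The boundary compatibility keeping one in the Dirichlet scale ($b(0)=0$, $Z(0,t)=0$, and so on) is routine because all the data is built from the $e_{k}$.

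For the second step, identifying the equation for $v:=\partial_{x}X$, I would first check from \eqref{eq:1.5} and \eqref{eq:partialrho} that $\partial_{x}W^{\rho}(x,t)=\sum_{k\ge1}\sqrt{2}\,k\pi e^{-k^{2}\pi^{2}\rho}\cos(k\pi x)B^{k}(t)=W^{\partial\rho}(x,t)$, the termwise differentiation being licensed by the super-exponential decay of the coefficients. Then, testing the weak form of \eqref{eq:2.4} against $-\partial_{x}\psi$ for $\psi\in C_{c}^{\infty}((0,1))$ and integrating by parts (no boundary terms, since $\psi$ has compact support) converts $\langle X,\Delta(-\partial_{x}\psi)\rangle$ into $\langle v,\Delta\psi\rangle$, $\langle 2KX,-\partial_{x}\psi\rangle$ into $\langle 2Kv,\psi\rangle$, $\langle b(X(\cdot,s)),-\partial_{x}\psi\rangle$ into $\langle\partial_{x}[b(X(\cdot,s))],\psi\rangle=\langle b'(X(\cdot,s))v(\cdot,s),\psi\rangle$ (chain rule, legitimate by the smoothness from the first step), and $\langle dW^{\rho},-\partial_{x}\psi\rangle$ into $\langle dW^{\partial\rho},\psi\rangle$, while the datum becomes $\langle\partial_{x}u,\psi\rangle$; this is exactly the weak form of \eqref{eq:6.2}. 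Equivalently one may differentiate the mild formula directly, noting that $\partial_{x}$ intertwines the Dirichlet semigroup $e^{tA}$ with the Neumann semigroup on mean-zero functions (since $\partial_{x}e_{k}=k\pi\cos(k\pi x)$), which is the correct reading of $\Delta$ in \eqref{eq:6.2} and the reason no boundary condition is attached there.

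The main obstacle I anticipate is the interplay between differentiating across the stochastic integral and the boundary conditions: $v$ is not Dirichlet, so a naive ``$\partial_{x}$ commutes with everything'' manipulation must be tracked through the passage from the Dirichlet to the Neumann realization of the Laplacian on the mean-zero subspace (note $\int_{0}^{1}\partial_{x}X(x,t)\,dx=X(1,t)-X(0,t)=0$). Routing the computation through compactly supported test functions removes this difficulty, and the quantitative bounds of Section \ref{sec:2.2}, especially the summability of $\sum_{k}k^{2m}e^{-2k^{2}\pi^{2}\rho}$, are precisely what make the termwise differentiation of the noise and the interchange of $\partial_{x}$ with the time integrals rigorous; the remainder is a routine fixed-point-and-bootstrap argument.
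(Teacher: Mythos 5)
Your proposal is correct and its overall skeleton matches the paper's proof: both first establish spatial smoothness by estimating the stochastic convolution in the $H^{m}$-scale (exactly the computation with $\sum_{k}(1+k^{2})^{m}e^{-2k^{2}\pi^{2}\rho}$ that the paper performs, the paper then delegating well-posedness and smoothness to an argument analogous to Theorem 7.8 of \cite{Hairer1998document}, where you instead sketch a self-contained Picard iteration plus Sobolev bootstrap), and both then identify the equation for $v=\partial_{x}X$ through the weak formulation against $\psi\in C_{c}^{\infty}((0,1))$. The one genuine divergence is in that identification step: the paper forms the spatial difference quotients $\hat{\nabla}_{h}X=(X(\cdot+h,t)-X(\cdot,t))/h$, writes the weak form satisfied by the translated solution $X_{h}$ (with kernel $\tilde{\rho}(\cdot+h,y)$), and passes to the limit $h\to0$ by dominated convergence, whereas you dualize the derivative onto the test function, testing against $-\partial_{x}\psi$ and integrating by parts, and alternatively differentiate the mild formula using the intertwining $\partial_{x}e_{k}=k\pi\cos(k\pi\cdot)$ between the Dirichlet and Neumann realizations. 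The two mechanisms are equivalent here; your route avoids the limit interchange for the difference quotients and makes explicit why no boundary condition accompanies \eqref{eq:6.2}, while the paper's difference-quotient argument needs slightly less structural information about $b$ (only the already-proved smoothness of $X$ and dominated convergence) and does not require the observation that $b$, built from $U_{\varepsilon,\delta}$ and the linear term, is globally Lipschitz with bounded higher derivatives — a fact you use (correctly) for the composition and chain-rule steps, and which indeed holds since $(g_{\varepsilon}^{-1})'$ is bounded and the higher derivatives of $U_{\varepsilon,\delta}$ are translates of derivatives of the mollifier supported near $\pm1$. Your "routine" dismissal of the Dirichlet-scale boundary compatibility in the bootstrap is the only point argued at a lower level of detail than the rest, but it is no less rigorous than the paper's own citation-based treatment of the same issue, and it does go through because $b(0)=0$ and $b^{(j)}(0)=0$ for $j\ge2$.
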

	\begin{proof}Firstly, we investigate the regularity of the solution to SPDE~\eqref{eq:2.4}. Let $S(t)$ be the semi-group corresponding to the Laplacian operator $\frac{K}{4M^{2}}\Delta$ with Dirichlet boundary conditions. We consider the stochastic convolution firstly
		\begin{align*}
			W_{L}^{\rho}(x,t):=\int_{0}^{t}S(t-s)dW^{\rho}(x,s)=\sum_{k=1}^{+\infty}\int_{0}^{t} e^{-(t-s)\frac{K\pi^{2}}{4M^{2}}k^{2}}e^{-k^{2}\pi^{2}\rho}\sqrt{2}\sin(k\pi x)dB_{s}^{k}.
		\end{align*}
		For any integer $m\geq 1$,
		\begin{align*}
		E\left[\sum_{k=1}^{+\infty}(1+k^{2})^{m}\vert\int_{0}^{t} e^{-(t-s)\frac{K\pi^{2}}{4M^{2}}k^{2}}e^{-k^{2}\pi^{2}\rho}dB_{s}^{k}\vert^{2}\right]
		\leq&\sum_{k=1}^{+\infty}(1+k^{2})^{m}\int_{0}^{t}e^{-(t-s)\frac{K\pi^{2}}{2M^{2}}k^{2}}e^{-2k^{2}\pi^{2}\rho}ds\\
		\leq&\sum_{k=1}^{+\infty}\frac{2M^{2}(1+k^{2})^{m}}{k^{2}\pi^{2}K}e^{-2k^{2}\pi^{2}\rho}<+\infty,
	    \end{align*}
	    then $W_{L}^{\rho}(\cdot,t)\in H^{m}([0,1])$ almost surely, where $ H^{m}([0,1])$ denotes the Sobolev space consisting of all functions that have weak derivatives up to order $m$. Hence by the Sobolev embedding theorem, the stochastic convolution $W_{L}^{\rho}(\cdot,t)\in C^{\hat{\gamma}}([0,1]),\hat{\gamma}<m-\frac{1}{2}$, where $C^{\hat{\gamma}}([0,1])$ is the space of $\hat{\gamma}$-H\"{o}lder continuous functions. Following an argument analogous to the Theorem 7.8 in \cite{Hairer1998document}, we can conclude that Equation~\eqref{eq:2.4} has a globally unique solution, and the solution is smooth with respect to the spatial variable $x$. Let $C_{c}^{\infty}((0,1))$ denote the set of all infinitely differentiable real-valued functions on $(0,1)$ with compact support. Taking a test function $\phi(x)\in C_{c}^{\infty}((0,1))$, then the solution to Equation~\eqref{eq:2.4}~satisfies
		\begin{align*}
			\langle X(t),\phi\rangle_{L^{2}}&=\langle X(0),\phi\rangle_{L^{2}}+\frac{K}{4M^{2}}\int_{0}^{t} \langle \Delta X(s),\phi\rangle_{L^{2}}ds+2K\int_{0}^{t} \langle X(s),\phi\rangle_{L^{2}}ds\\
			&+\int_{0}^{t}\langle b(X(s)),\phi\rangle_{L^{2}}ds+\int_{0}^{t}\langle \phi,dW^{\rho}(s)\rangle_{L^{2}}\\
			&=\langle X(0),\phi\rangle_{L^{2}}+\frac{K}{4M^{2}}\int_{0}^{t} \langle X(s),\Delta \phi\rangle_{L^{2}}ds+2K\int_{0}^{t} \langle X(s),\phi\rangle_{L^{2}}ds\\
			&+\int_{0}^{t}\langle b(X(s)),\phi\rangle_{L^{2}}ds+\int_{0}^{t}\int_{0}^{1} \langle \phi,\tilde{\rho}(\cdot,y)\rangle_{L^{2}}dW(y,s),\text{a.s.}.
		\end{align*}
		For small $h$, let $X_{h}(t):=X(\cdot+h,t)$. We also have
		\begin{align*}
			\langle X_{h}(t),\phi\rangle_{L^{2}}&=\langle X_{h}(0),\phi\rangle_{L^{2}}+\frac{K}{4M^{2}}\int_{0}^{t} \langle X_{h}(s),\Delta \phi\rangle_{L^{2}}ds+\int_{0}^{t}\langle b(X_{h}(s)),\phi\rangle_{L^{2}}ds+2K\int_{0}^{t} \langle X_{h}(s),\phi\rangle_{L^{2}}ds\\
			&+\int_{0}^{t}\int_{0}^{1} \langle \phi,\tilde{\rho}(\cdot+h,y)\rangle_{L^{2}}dW(y,s),\text{a.s.}.
		\end{align*}
		Let 
		\begin{align*}
			\hat{\nabla}_{h}X(x,t):=\frac{X(x+h,t)-X(x,t)}{h},   
		\end{align*}
		then
		\begin{align*}
			\langle \hat{\nabla}_{h}X(t),\phi\rangle_{L^{2}}&=\langle \hat{\nabla}_{h}X(0),\phi\rangle_{L^{2}}+\frac{K}{4M^{2}}\int_{0}^{t} \langle \hat{\nabla}_{h}X(s),\Delta \phi\rangle_{L^{2}}ds\\
			&+2K\int_{0}^{t} \langle \hat{\nabla}_{h}X(s), \phi\rangle_{L^{2}}ds+\int_{0}^{t}\langle \frac{b(X_{h}(s))-b(X(s))}{h},\phi\rangle_{L^{2}}ds\\
			&+\int_{0}^{t}\int_{0}^{1} \langle \phi,\frac{\tilde{\rho}(\cdot+h,y)-\tilde{\rho}(\cdot,y)}{h}\rangle_{L^{2}}dW(y,s),\text{a.s.}.
		\end{align*}
		By the regularity of the solution and Lebesgue's dominated convergence theorem, we know that
		\begin{align*}
		\lim\limits_{h\rightarrow 0}	\hat{\nabla}_{h}X(x,t)=\lim\limits_{h\rightarrow 0}\frac{X(x+h,t)-X(x,t)}{h}=\partial_{x} X(x,t),\text{a.s.},  
		\end{align*}
		and
		\begin{align*}
			\langle \partial_{x} X(t),\phi\rangle_{L^{2}}&=\langle \partial_{x} X(0),\phi\rangle_{L^{2}}+\frac{K}{4M^{2}}\int_{0}^{t} \langle \Delta\partial_{x} X(s), \phi\rangle_{L^{2}}ds\\
			&+2K\int_{0}^{t} \langle \partial_{x} X(s), \phi\rangle_{L^{2}}ds+\int_{0}^{t}\langle b^{\prime}(X(s))\partial_{x}X(s),\phi\rangle_{L^{2}}ds\\
			&+\int_{0}^{t}\int_{0}^{1} \langle \phi,\partial_{x}\tilde{\rho}(\cdot,y)\rangle_{L^{2}}dW(y,s),\text{a.s.}.
		\end{align*}
		Then 
		\begin{align*}
			d\partial_{x}X(x,t)&=\frac{K}{4M^{2}}\Delta  \partial_{x} X(x,t)dt+ 2K\partial_{x}X(x,t)dt+
			b^{\prime}(X(x,t))\partial_{x}X(x,t)dt+dW^{\partial\rho}(x,t).
		\end{align*}
	\end{proof}	
	Now, we can establish estimates for $\partial_{x}X(x,t)$.
	\begin{theorem}\label{thm:diffestima}
	Suppose that $X(x,t)$ is the solution to SPDE~\eqref{eq:2.4}~and $\partial_{x}X(x,t)$ satisfies Equation~\eqref{eq:6.2}. Then we obtain the following estimates for $t\in[0,T]$
		\begin{align*}
			E[\Vert\partial_{x} X(t)\Vert _{L^{2}}^{2}]
			&\leq \Vert\partial_{x} u\Vert _{L^{2}}^{2}+\Vert Q_{\partial\rho}^{\frac{1}{2}}\Vert^{2}_{2}t,\\
			E[\Vert \partial_{x}X(t)\Vert _{L^{2}}^{4}]
			&\leq\mathcal{R}_{1}(t):=\Vert\partial_{x} u\Vert _{L^{2}}^{4}+\left(8K+6\Vert Q_{\partial\rho}^{\frac{1}{2}}\Vert^{2}_{2}\right)\left(\Vert\partial_{x} u\Vert _{L^{2}}^{2}t+\frac{\Vert Q_{\partial\rho}^{\frac{1}{2}}\Vert^{2}_{2}t^{2}}{2}\right).
		\end{align*}

	\end{theorem}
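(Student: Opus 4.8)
The plan is to run the energy method: apply It\^{o}'s formula to $t\mapsto\|v(t)\|_{L^{2}}^{2}$ and then, bootstrapping, to $t\mapsto\|v(t)\|_{L^{2}}^{4}$ for the $L^{2}(\mathbb{S}^{1})$-valued process $v(t):=\partial_{x}X(\cdot,t)$ solving Equation~\eqref{eq:6.2}, exploiting the dissipativity of both the Laplacian and the nonlinear drift. Before computing I would record three facts. \emph{(i) Dissipativity of the Laplacian:} integration by parts gives $\langle v(s),\Delta v(s)\rangle_{L^{2}}=-\|\partial_{x}v(s)\|_{L^{2}}^{2}\le0$, the boundary term $[\partial_{x}X\,\partial_{x}^{2}X]_{0}^{1}$ vanishing because, differentiating $X(0,t)=X(1,t)=0$ in $t$ and using Equation~\eqref{eq:2.4} together with $b(0)=U_{\varepsilon,\delta}(0)=0$ and the vanishing of every mode $e_{k}$ of $W^{\rho}$ at $x\in\{0,1\}$, one gets $\partial_{x}^{2}X(0,t)=\partial_{x}^{2}X(1,t)=0$. \emph{(ii) Dissipativity of the drift:} since $b(x)=U_{\varepsilon,\delta}(x)+(1-\gamma)x$ and $-1/\varepsilon\le U'_{\varepsilon,\delta}(x)\le-\varepsilon$ for $\varepsilon\le1$, we have $b'(X(x,s))=U'_{\varepsilon,\delta}(X(x,s))+1-\gamma\le1-\gamma<0$, and (for $\gamma$ large enough, say $\gamma\ge2K+1$, which is compatible with the renormalization relation) even $2K+b'(X(x,s))\le2K+1-\gamma-\varepsilon\le0$, so $\langle v(s),(2K+b'(X(s)))v(s)\rangle_{L^{2}}\le0$. \emph{(iii) Noise:} the It\^{o} correction of $\int_{0}^{\cdot}dW^{\partial\rho}$ has trace $\|Q_{\partial\rho}^{1/2}\|_{2}^{2}$, and the covariance operator $\mathrm{Cov}_{\partial\rho}$ of $W^{\partial\rho}$ satisfies $\langle\mathrm{Cov}_{\partial\rho}w,w\rangle_{L^{2}}\le\|Q_{\partial\rho}^{1/2}\|_{2}^{2}\|w\|_{L^{2}}^{2}$ (operator norm at most trace), both quantities finite by Lemma~\ref{cor:trest}.

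For the first estimate, It\^{o}'s formula reads
\begin{align*}
\|v(t)\|_{L^{2}}^{2}&=\|\partial_{x}u\|_{L^{2}}^{2}+\|Q_{\partial\rho}^{1/2}\|_{2}^{2}t-\frac{K}{2M^{2}}\int_{0}^{t}\|\partial_{x}v(s)\|_{L^{2}}^{2}ds\\
&\quad+2\int_{0}^{t}\langle v(s),(2K+b'(X(s)))v(s)\rangle_{L^{2}}ds+2\int_{0}^{t}\langle v(s),dW^{\partial\rho}(s)\rangle_{L^{2}}.
\end{align*}
By (i) and (ii) the two drift integrals are nonpositive; localizing with $\tau_{n}:=\inf\{t:\|v(t)\|_{L^{2}}>n\}$ turns the stochastic integral into a mean-zero martingale, so taking expectations on $[0,t\wedge\tau_{n}]$ and letting $n\to\infty$ by Fatou gives $E[\|v(t)\|_{L^{2}}^{2}]\le\|\partial_{x}u\|_{L^{2}}^{2}+\|Q_{\partial\rho}^{1/2}\|_{2}^{2}t$.

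For the fourth moment I would apply It\^{o}'s formula to $\|v(t)\|_{L^{2}}^{4}$: the first-order terms contribute $4\|v(t)\|_{L^{2}}^{2}$ times the bracket above, whose Laplacian and nonlinear pieces are nonpositive while the linear reaction piece is bounded by $8K\|v(t)\|_{L^{2}}^{4}$, and the second-order It\^{o} correction equals $2\|v(t)\|_{L^{2}}^{2}\|Q_{\partial\rho}^{1/2}\|_{2}^{2}+4\langle\mathrm{Cov}_{\partial\rho}v(t),v(t)\rangle_{L^{2}}\le6\|Q_{\partial\rho}^{1/2}\|_{2}^{2}\|v(t)\|_{L^{2}}^{2}$ by (iii). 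After the same localization and taking expectations one reaches a differential inequality of the form $\frac{d}{dt}E[\|v(t)\|_{L^{2}}^{4}]\le(8K+6\|Q_{\partial\rho}^{1/2}\|_{2}^{2})E[\|v(t)\|_{L^{2}}^{2}]$; substituting the first estimate $E[\|v(s)\|_{L^{2}}^{2}]\le\|\partial_{x}u\|_{L^{2}}^{2}+\|Q_{\partial\rho}^{1/2}\|_{2}^{2}s$ and integrating in $s$ (equivalently, applying Gronwall's inequality, Lemma~\ref{lem:3.2}) produces exactly $\mathcal{R}_{1}(t)$.

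The two It\^{o} expansions and the localization that upgrades the stochastic integrals to true martingales are routine. The step I expect to demand the most care is showing that the Laplacian term is genuinely dissipative even though $v=\partial_{x}X$ carries no Dirichlet boundary condition: this hinges on the vanishing of $\partial_{x}^{2}X$ at $x\in\{0,1\}$, which uses simultaneously the boundary condition on $X$, the vanishing of the colored noise $W^{\rho}$ there, and $b(0)=0$. A secondary, more clerical pitfall is the $\langle\mathrm{Cov}_{\partial\rho}v,v\rangle_{L^{2}}$ piece of the second-order It\^{o} term in the quartic computation, whose omission would corrupt the constant in $\mathcal{R}_{1}$.
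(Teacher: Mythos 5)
Your proposal is essentially the paper's own argument: It\^o's formula for $\Vert \partial_x X(t)\Vert_{L^2}^2$ and then for its square, nonpositivity of the Laplacian term after integration by parts, the bound $b'\le 1-\gamma-\varepsilon$ combined with the smallness condition $1-\gamma-\varepsilon+2K\le 0$, a quadratic-variation/trace bound giving the coefficient $6\Vert Q_{\partial\rho}^{1/2}\Vert_2^2$, and finally insertion of the second-moment estimate and integration in time. Your explicit check that $\partial_{xx}X(0,t)=\partial_{xx}X(1,t)=0$ (so the integration by parts carries no boundary term) is a detail the paper passes over silently, and is a welcome addition.

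One caveat concerns the quartic step. You correctly identify the reaction contribution in the expansion of $\Vert v(t)\Vert_{L^2}^4$ as $8K\Vert v(t)\Vert_{L^2}^4$, but your differential inequality then reads $\frac{d}{dt}E[\Vert v(t)\Vert_{L^2}^4]\le (8K+6\Vert Q_{\partial\rho}^{1/2}\Vert_2^2)\,E[\Vert v(t)\Vert_{L^2}^2]$, in which this term has silently lost two powers of $\Vert v\Vert_{L^2}$; as written that step is a non sequitur (one cannot bound $8K\Vert v\Vert_{L^2}^4$ by $8K\Vert v\Vert_{L^2}^2$, and keeping the quartic term would instead force a Gronwall factor $e^{8Kt}$, which is not $\mathcal{R}_1(t)$). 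The gap is easily bridged with material you already have: by your own point (ii), $2K+b'(X)\le 0$, so $8K Y_t\Vert v\Vert_{L^2}^2+4Y_t\langle v,b'(X)v\rangle_{L^2}\le 4(2K+1-\gamma-\varepsilon)Y_t\Vert v\Vert_{L^2}^2\le 0$, whence $\frac{d}{dt}E[\Vert v\Vert_{L^2}^4]\le 6\Vert Q_{\partial\rho}^{1/2}\Vert_2^2 E[\Vert v\Vert_{L^2}^2]\le (8K+6\Vert Q_{\partial\rho}^{1/2}\Vert_2^2)E[\Vert v\Vert_{L^2}^2]$, and integrating against the first estimate yields $\mathcal{R}_1(t)$; you should state this absorption explicitly. (For what it is worth, the paper's proof contains the very same slip, writing $8K\Vert\partial_x X(t)\Vert_{L^2}^2$ where the expansion of $dY_t^2$ produces $8K\Vert\partial_x X(t)\Vert_{L^2}^4$, and its stated bound survives for exactly the reason just given.)
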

	
	\begin{proof}
		By It\^{o} formula and Equation~\eqref{eq:6.2}, we have
		\begin{align*}
			d\Vert \partial_{x}X(t)\Vert _{L^{2}}^{2}&=2\langle \partial_{x}X(t), d(\partial_{x} X(t))\rangle_{L^{2}}+\Vert Q_{\partial\rho}^{\frac{1}{2}}\Vert^{2}_{2}dt,
		\end{align*}
		where 
		\begin{align*}
		&E\left[\langle \sum_{k=1}^{+\infty}\int_{0}^{1} \partial_{x}\tilde{\rho}(\cdot,y)\sin(k\pi y)dydB^{k}(t),\sum_{k=1}^{+\infty}\int_{0}^{1} \partial_{x}\tilde{\rho}(\cdot,y)\sin(k\pi y)dydB^{k}(t)\rangle_{L^{2}}^{2}\right]\\
			=&\sum_{k=1}^{+\infty}\langle \int_{0}^{1} \partial_{x}\tilde{\rho}(\cdot,y)\sin(k\pi y)dy,\int_{0}^{1} \partial_{x}\tilde{\rho}(\cdot,y)\sin(k\pi y)dy\rangle_{L^{2}}^{2}dt=\sum_{k=1}^{+\infty}e^{-2k^{2}\pi^{2}\rho}\frac{k^{2}\pi^{2}}{2}dt=\Vert Q_{\partial\rho}^{\frac{1}{2}}\Vert^{2}_{2}dt.
		\end{align*}
		Note that
		\begin{align*}
			\frac{K}{2M^{2}}\int_{0}^{t}\langle  \partial_{x} X(s), \Delta  \partial_{x} X(s)\rangle_{L^{2}}^{2}ds=-\frac{K}{2M^{2}}\int_{0}^{t}\Vert  \partial_{xx} X(s)\Vert _{L^{2}}^{2}ds.
		\end{align*}
		Then
		\begin{align*}
			\Vert \partial_{x} X(t)\Vert _{L^{2}}^{2}
			&=\Vert\partial_{x} u\Vert _{L^{2}}^{2}+\Vert Q_{\partial\rho}^{\frac{1}{2}}\Vert^{2}_{2}t+2\int_{0}^{t}\langle 
			\partial_{x} X(s),b^{\prime}(X(s))\partial_{x} X(s)\rangle_{L^{2}}ds\\
			&+2\frac{K}{4M^{2}}\int_{0}^{t}\langle \partial_{x} X(s), \Delta \partial_{x} X(s) \rangle_{L^{2}}ds+4K\int_{0}^{t}\Vert \partial_{x} X(s)\Vert _{L^{2}}^{2}ds\\
			&+2\int_{0}^{t}\langle \partial_{x} X(s), dW^{\partial\rho}(s)\rangle_{L^{2}}ds\\
			&\leq\Vert \partial_{x} u\Vert _{L^{2}}^{2}+\Vert Q_{\partial\rho}^{\frac{1}{2}}\Vert^{2}_{2}t+2\int_{0}^{t}\langle \partial_{x} X(s),  b^{\prime}(X(s))\partial_{x} X(s)\rangle_{L^{2}}ds\\
			&+4K\int_{0}^{t}\Vert \partial_{x} X(s)\Vert _{L^{2}}^{2}ds+2\int_{0}^{t}\langle\partial_{x} X(s), dW^{\partial\rho}(s)\rangle_{L^{2}}.
		\end{align*}
		Taking the expectation, we get
		\begin{align*}
			E[\Vert\partial_{x} X(t)\Vert _{L^{2}}^{2}]
			&\leq \Vert\partial_{x} u\Vert _{L^{2}}^{2}+\Vert Q_{\partial\rho}^{\frac{1}{2}}\Vert^{2}_{2}t+4K\int_{0}^{t}E[\Vert\partial_{x} X(s)\Vert _{L^{2}}^{2}]ds\\
			&+\int_{0}^{t}2(1-\gamma -\varepsilon)E[\langle \partial_{x} X(s), \partial_{x} X(s)\rangle_{L^{2}}]ds\\
			&\leq \Vert\partial_{x} u\Vert _{L^{2}}^{2}+\Vert Q_{\partial\rho}^{\frac{1}{2}}\Vert^{2}_{2}t+C_{2}\int_{0}^{t}E[\Vert\partial_{x} X(s)\Vert _{L^{2}}^{2}]ds,
		\end{align*}
		where
		\begin{align*}
			C_{2}:=2(1-\gamma -\varepsilon+2K).
		\end{align*}
	Since the coupling parameter $\gamma>1$ is arbitrary, and both the regularized parameter $\varepsilon$ and the coupling parameter $K$ tend to 0, it follows that $C_{2}\leq 0$. Therefore, we know that
		\begin{align*}
			E[\Vert\partial_{x} X(t)\Vert _{L^{2}}^{2}]
			&\leq \Vert\partial_{x} u\Vert _{L^{2}}^{2}+\Vert Q_{\partial\rho}^{\frac{1}{2}}\Vert^{2}_{2}t, \ t\in(0,T).
		\end{align*}
		Furthermore, let $Y_{t}:=\Vert\partial_{x} X(t)\Vert _{L^{2}}^{2}$, then
		\begin{align*}
			dY_{t}^{2}&=2 Y_{t} dY_{t}+d[Y]_{t}=4Y_{t}\langle \partial_{x}X(t), d\partial_{x} X(t)\rangle_{L^{2}}+2\Vert Q_{\partial\rho}^{\frac{1}{2}}\Vert^{2}_{2}Y_{t}dt+d[Y]_{t}\\
			&=4Y_{t}\langle \partial_{x} X(t), b^{\prime}(X(t))\partial_{x} X(t)\rangle_{L^{2}}dt+8K\Vert \partial_{x} X(t)\Vert _{L^{2}}^{2}dt+d[Y]_{t}\\
			&+\frac{K}{M^{2}}Y_{t}\langle \partial_{x} X(t), \Delta \partial_{x} X(t)\rangle_{L^{2}}dt+2\Vert Q_{\partial\rho}^{\frac{1}{2}}\Vert^{2}_{2}Y_{t}dt+4Y_{t}\langle\partial_{x} X(t), dW^{\partial\rho}(t)\rangle_{L^{2}}\\
			&\leq 4\left(1-\gamma -\varepsilon\right)Y_{t}\Vert\partial_{x} X(t)\Vert _{L^{2}}^{2}dt+4Y_{t}\langle \partial_{x} X(t), dW^{\partial\rho}(t)\rangle_{L^{2}}\\
			&+\left(8K+2\Vert Q_{\partial\rho}^{\frac{1}{2}}\Vert^{2}_{2}\right)\Vert \partial_{x} X(t)\Vert _{L^{2}}^{2}dt+d[Y]_{t}, 
		\end{align*}
		where
		\begin{align*}
			[Y]_{t}
			&=4\sum_{k=1}^{+\infty}e^{-2k^{2}\pi^{2}\rho}k^{2}\pi^{2}\int_{0}^{t}\int_{0}^{1}\left(\partial_{x} X(x,s) \cos(k\pi x)\right)^{2}dxds\\
			&\leq 2\sum_{k=1}^{+\infty}k^{2}\pi^{2}e^{-2k^{2}\pi^{2}\rho}\int_{0}^{t}\Vert \partial_{x} X(s) \Vert_{L^{2}}^{2}ds=4\Vert Q_{\partial\rho}^{\frac{1}{2}}\Vert^{2}_{2}\int_{0}^{t}\Vert \partial_{x} X(s) \Vert_{L^{2}}^{2}ds.
		\end{align*}
		Then
		\begin{align*}
			E[\Vert \partial_{x} X(t)\Vert _{L^{2}}^{4}]
			&\leq\Vert\partial_{x} u\Vert _{L^{2}}^{4}+\left(8K+2\Vert Q_{\partial\rho}^{\frac{1}{2}}\Vert^{2}_{2}\right)\int_{0}^{t}E[\Vert \partial_{x} X(s)\Vert _{L^{2}}^{2}]ds\\
			&+E[[Y]_{t}]+4\left(1-\gamma -\varepsilon\right)\int_{0}^{t}E[\Vert\partial_{x} X(s)\Vert _{L^{2}}^{4}]ds\\
			&\leq\Vert\partial_{x} u\Vert _{L^{2}}^{4}+\left(8K+6\Vert Q_{\partial\rho}^{\frac{1}{2}}\Vert^{2}_{2}\right)\int_{0}^{t}E[\Vert \partial_{x} X(s)\Vert _{L^{2}}^{2}]ds\\
			&+4\left(1-\gamma -\varepsilon\right)\int_{0}^{t}E[\Vert\partial_{x} X(s)\Vert _{L^{2}}^{4}]ds\leq \mathcal{R}_{1}(t),
		\end{align*}
		where
		\begin{align*}
			\mathcal{R}_{1}(t)&:=\Vert\partial_{x} u\Vert _{L^{2}}^{4}+\left(8K+6\Vert Q_{\partial\rho}^{\frac{1}{2}}\Vert^{2}_{2}\right)\left(\Vert\partial_{x} u\Vert _{L^{2}}^{2}t+\frac{\Vert Q_{\partial\rho}^{\frac{1}{2}}\Vert^{2}_{2}t^{2}}{2}\right).
		\end{align*}
	This completes the proof.
	\end{proof}
	\section{Clark-Ocone-Haussmann formula and the log-Sobolev inequality}\label{sec:4}
	In this section, we aim to determine the renormalization relation via the log-Sobolev inequality up to the terminal time $T$ for the solution to SPDE~\eqref{eq:2.4}. To this end, we shall derive the Clark-Ocone-Haussmann formula for cylindrical functions on the solution in Section \ref{sec:4.1}. Having the Clark-Ocone-Haussmann formula in hand, we prove the log-Sobolev inequality and the Poincar\'e inequality 
	in Section \ref{sec:4.2}. In particular, the constant in the log-Sobolev inequality depends on the regularized and coupling parameters in SPDE~\eqref{eq:2.4}~as well as the terminal time $T$.  Based on the principle that the constant in the log-Sobolev inequality remains finite as $T\rightarrow+\infty$, the renormalization relation will be determined. Before proceeding to the details, we introduce the following definitions for convenience. Let
	\begin{align*} \mathcal{L}:=\{l:[0,T]\rightarrow L^{2}([0,1])\mid l\text{ is continuous}\},
	\end{align*}
	and
	\begin{align}\label{eq:space}
		L_{a}^{2}:=\{j:\Omega\times[0,T]\rightarrow\mathbb{R}\mid j(\cdot,t)\in\mathcal{F}_{t}\ \text{for all}\ t\in[0,T]\ \text{and} \ E[\int_{0}^{T}|j(t)|^{2}dt]<+\infty\},
	\end{align}
		where $j(\cdot,t)\in\mathcal{F}_{t}$ means that $j(\cdot,t)$ is measurable with respect to $\mathcal{F}_{t}$. For convenience, we will sometimes omit the dot (i.e., 
	$\cdot$) notation.
	 Let $\tilde{\Omega}:=\Omega\times[0,1]\times[0,T]$, we set
	 \begin{align*}
	 	\mathcal{H}_{\rho}\times L^{2}:=\{r\in\mathcal{L}\mid \Vert r\Vert^{2}_{\mathcal{H}_{\rho}\times L^{2}}:= \int_{0}^{T}\Vert r(t)\Vert^{2}_{\mathcal{H}_{\rho}}dt<+\infty\},
	 \end{align*}
	 and
	\begin{align*}
		\tilde{\mathcal{A}}\mathcal{H}_{\rho}:=\{k:\tilde{\Omega}\rightarrow\mathbb{R}\mid k(x,t)\in \mathcal{F}_{t}\ \text{for all} \ x,t\ \text{and}\ k(\omega) \in \mathcal{H}_{\rho}\times L^{2}\ \text{for all} \ \omega\}.
	\end{align*}
		\begin{definition} 
		 A function $F:\mathcal{L}\rightarrow\mathbb{R}$ is said to be a smooth cylindrical function if it is in the form
		\begin{align}\label{eq:cylfun}
			F(l):=f(l(x_{1},t_{1}),\cdots,l(x_{n},t_{n})),
		\end{align}
		where $t_{1},\cdots,t_{n}\in [0,T]$, $x_{1},\cdots,x_{n}\in (0,1)$ and $f$ is a smooth real-valued function on $\mathbb{R}^{n}$. Define
		\begin{align*}
			\mathcal{H}_{\rho}\times \mathbb{H}^{1}:=\{l\in\mathcal{L}\mid \Vert l\Vert^{2}_{\mathcal{H}_{\rho}\times \mathbb{H}^{1}}:= \int_{0}^{T}\Vert\dot{l}(t)\Vert^{2}_{\mathcal{H}_{\rho}}dt<+\infty\},
		\end{align*}
		where the dot is used to denote the derivative with respect to time $t$.
		 For $h\in \mathcal{H}_{\rho}\times \mathbb{H}^{1}$, the directional derivative of $F$ along $h$ is defined as 
		\begin{align*}
			D_{h}F(l):=\lim_{r\rightarrow 0}\frac{F(l+rh)-F(l)}{r}.
		\end{align*}
	\end{definition} 
	We denote by $\text{Cyl}(\mathcal{L})$ the set of all smooth cylindrical functions on $ \mathcal{L}$. For a cylindrical function $F\in \text{Cyl}(\mathcal{L})$ that satisfies
	\begin{align*} F(X)=f(X(x_{1},t),X(x_{2},t),\cdots ,X(x_{n},t)),
	\end{align*}
	where $X(x,t)$ is the solution to SPDE~\eqref{eq:2.4}~and $f$ is a smooth real-valued function on $\mathbb{R}^{n}$. By definition, the directional derivative of $F$ along $h$ is
	\begin{align*}
		D_{h}F(X)=&\lim_{r\rightarrow 0}\frac{F(X+rh)-F(X)}{r}\\
		=&\lim_{r\rightarrow 0}\frac{f(X(x_{1},t)+rh(x_{1},t),\cdots ,X(x_{n},t)+rh(x_{n},t))-f(X(x_{1},t),\cdots ,X(x_{n},t))}{r}\\
		=&\sum_{i=1}^{n}\nabla_{i}fh(x_{i},t),
	\end{align*}
	where 
	\begin{align*}
		\nabla_{i}f:=\nabla_{i}f(X(x_{1},t),X(x_{2},t),\cdots ,X(x_{n},t)).
	\end{align*}
 On the one hand, 
	\begin{align*}
		D_{h}F(X)&=\sum_{i=1}^{n}\int_{0}^{T}\nabla_{i}fI_{[0,t]}(s)\dot{h}(x_{i},s)ds:=\sum_{i=1}^{n}\int_{0}^{T}(D_{i}F)_{s}^{\cdot}\dot{h}(x_{i},s)ds,
	\end{align*}
	where
	\begin{align*}
		D_{i}F(s):=\nabla_{i}f\cdot s\land t,(D_{i}F)_{s}^{\cdot}:=(	D_{i}F(s))^{\cdot}=\nabla_{i}fI_{[0,t]}(s).
	\end{align*}
	On the other hand, by the Riesz representation theorem, we define
	$DF$ as the unique element in $\mathcal{H}_{\rho}\times \mathbb{H}^{1}$ such that
	\begin{align*}
		D_{h}F(X)&=\langle DF,h \rangle_{\mathcal{H}_{\rho}\times \mathbb{H}^{1}}
		=\sum_{i=1}^{n}\int_{0}^{T}(D_{i}F)_{s}^{\cdot}\dot{h}(x_{i},s)ds.
	\end{align*}
	Then we have
	\begin{align*}
		DF(x,s)
		&=\sum_{i=1}^{n}\mathcal{K}(x,x_{i})D_{i}F(s)=\sum_{i=1}^{n}\mathcal{K}(x,x_{i})\nabla_{i}f\cdot s\land t,
	\end{align*}
	and
	\begin{align*}
		\Vert DF\Vert_{\mathcal{H}_{\rho}\times \mathbb{H}^{1}}^{2}
		&=\int_{0}^{T}\int_{0}^{1}Q^{-1/2}_{\rho} ((DF(s,x))^{\cdot})Q^{-1/2}_{\rho} ((DF(s,x))^{\cdot})dxds\\
		&=\int_{0}^{T}\int_{0}^{1}Q^{-1/2}_{\rho} (\sum_{i=1}^{n}\mathcal{K}(x,x_{i})(D_{i}F)_{s}^{\cdot})Q^{-1/2}_{\rho} (\sum_{j=1}^{n}\mathcal{K}(x,x_{j})(D_{j}F)_{s}^{\cdot})dxds\\
		&=\sum_{i=1}^{n}\sum_{j=1}^{n}\int_{0}^{T}(D_{i}F)_{s}^{\cdot}(D_{j}F)_{s}^{\cdot}\int_{0}^{1}Q^{-1/2}_{\rho} (\mathcal{K}(x_{i},x))Q^{-1/2}_{\rho} (\mathcal{K}(x,x_{j}))dxds\\
		&=\sum_{i=1}^{n}\sum_{j=1}^{n}\int_{0}^{T}(D_{i}F)_{s}^{\cdot}(D_{j}F)_{s}^{\cdot}\int_{0}^{1}\rho(x_{i},x)\rho(x,x_{j})dxds\\
		&=\sum_{i=1}^{n}\sum_{j=1}^{n}\mathcal{K}(x_{i},x_{j})\int_{0}^{T}(D_{i}F)_{s}^{\cdot}(D_{j}F)_{s}^{\cdot}ds=t\sum_{i=1}^{n}\sum_{j=1}^{n}\mathcal{K}(x_{i},x_{j})\nabla_{i}f\nabla_{j}f .
	\end{align*}

	\subsection{Integration by parts formula and Clark-Ocone-Haussmann formula}\label{sec:4.1}
	In this section, we will provide an integration by parts formula via the \emph{pull-back formula} of Equation~\eqref{eq:2.4}. For any $r\in(-\hat{\epsilon},\hat{\epsilon})$, let $W^{\rho}(r)(x,t)$ be a perturbation of $W^{\rho}(x,t)$ that satisfies
	\begin{align*}
		W^{\rho}(r)(x,t)=W^{\rho}(x,t)+r\int_{0}^{t}k(x,s)ds, k\in\tilde{\mathcal{A}}\mathcal{H}_{\rho},
	\end{align*}
	and let $\{X^{r}(x,t)\}_{(x,t)\in [0,1]\times [0,T]}$ be the perturbation of solution $\{X(x,t)\}_{(x,t)\in [0,1]\times [0,T]}$ that satisfies
	\begin{align}\label{eq:pur}
		d X^{r}(x,t)=\frac{K}{4M^{2}}\Delta X^{r}(x,t)+2K X^{r}(x,t)+b(X^{r}(x,t))dt+ dW^{\rho}(r)(x,t).
	\end{align}
	\begin{definition}\label{def:tanpro} 
		Let $ \mathcal{PH}$ be the function space that satisfies 
	\begin{align*}
		\mathcal{PH}:=\{h:\tilde{\Omega}\rightarrow\mathbb{R}\mid\dot{h}(x)\in	L_{a}^{2} \ \text{for all}\ x\ \text{and}\ \partial_{xx}h(\omega,x,t)=C h(\omega,x,t)\ \text{for all}\ \omega,x,t\}.
	\end{align*}
	Note that the functions in space $\mathcal{PH}$ are the eigenfunctions of Laplacian operator  with respect to variable $x$ under Dirichlet boundary conditions. Specifically, they can take the following forms: for any positive integer $m$, $\tilde{h}_{m}(x,t)=\sin{(m\pi x)}h_{m}(t)$, where $h_{m}(t)$ belongs to the adapted Cameron-Martin space of the 1D Wiener process, in fact, $C=C(m)=(m\pi)^{2}$.
\end{definition} 
	\begin{lemma}(Pull-back formula).\label{lem:23}
		If the solution to Equation~\eqref{eq:pur}~satisfies 
		\begin{enumerate}[{\rm (i)}]
			\item $(X^{r}(x,t))_{t\in[0,T]}\in \mathcal{L}$ for any $r$, and $X^{r}(x,t)|_{r=0}=X(x,t)$,
			\item $\frac{d}{dr}X^{r}(x,t)|_{r=0}$ exists, and $\frac{d}{dr}X^{r}(x,t)|_{r=0}=h(x,t)$ for $h\in\mathcal{PH}$.
		\end{enumerate}
		Then
		\begin{align*}
			\dot{h}(x,t)=\frac{K}{4M^{2}}\Delta h(x,t)+2K h(x,t) +b^{\prime}(X(x,t))h(x,t)+
			k(x,t),
		\end{align*}
		where
		\begin{align*}
		b^{\prime}(X(x,t))= U^{\prime}_{\varepsilon,\delta}(X(x,t))+(1-\gamma).
 		\end{align*}
	\end{lemma}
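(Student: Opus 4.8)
The plan is to obtain the equation for $h$ by differentiating the integral form of the perturbed equation~\eqref{eq:pur} with respect to the parameter $r$ and evaluating at $r=0$. Writing~\eqref{eq:pur} in integral form and using $W^{\rho}(r)(x,t)=W^{\rho}(x,t)+r\int_{0}^{t}k(x,s)ds$, one has
\begin{align*}
X^{r}(x,t)&=u(x)+\frac{K}{4M^{2}}\int_{0}^{t}\Delta X^{r}(x,s)ds+2K\int_{0}^{t}X^{r}(x,s)ds\\
&\quad+\int_{0}^{t}b(X^{r}(x,s))ds+W^{\rho}(x,t)+r\int_{0}^{t}k(x,s)ds.
\end{align*}
First I would subtract the identity at $r=0$, which equals $X(x,t)$ by hypothesis~(i), divide by $r$ and send $r\rightarrow0$ using hypothesis~(ii). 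The noise term $W^{\rho}(x,t)$ is independent of $r$ and cancels, the two linear integrals and the perturbation integral pass to the limit directly, and for the drift term the mean value theorem gives $r^{-1}\bigl(b(X^{r}(x,s))-b(X(x,s))\bigr)=b'(\theta_{r}(x,s))\,r^{-1}(X^{r}(x,s)-X(x,s))$ with $\theta_{r}(x,s)$ between $X(x,s)$ and $X^{r}(x,s)$, which tends to $b'(X(x,s))h(x,s)$. This would produce the integral identity
\begin{align*}
h(x,t)&=\frac{K}{4M^{2}}\int_{0}^{t}\Delta h(x,s)ds+2K\int_{0}^{t}h(x,s)ds+\int_{0}^{t}b'(X(x,s))h(x,s)ds+\int_{0}^{t}k(x,s)ds,
\end{align*}
and differentiating in $t$ would give the asserted equation, with $b'=U'_{\varepsilon,\delta}+(1-\gamma)$ since $b(x)=U_{\varepsilon,\delta}(x)+(1-\gamma)x$ by~\eqref{eq:drift}.

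The steps requiring care are the interchanges of the limit $r\rightarrow0$ with the time integrals and with the spatial Laplacian. Since $h\in\mathcal{PH}$, Definition~\ref{def:tanpro} gives $\partial_{xx}h(x,t)=C\,h(x,t)$, so the Laplacian term is a scalar multiple of $h$ and inherits its adaptedness and square-integrability; this removes the need to establish independent spatial regularity of $\partial_{r}X^{r}|_{r=0}$, and I would exploit it throughout. For the remaining passages to the limit I would invoke dominated convergence using the a priori bounds of Section~\ref{sec:3}: Theorems~\ref{thm:3.2},~\ref{thm:61} and~\ref{thm:diffestima} supply the moment bounds for $X$ and $\partial_{x}X$ that are uniform on $[0,T]$, while the smoothness of $U_{\varepsilon,\delta}$ (a mollification of the piecewise linear $g_{\varepsilon}^{-1}$) together with the bound $-1/\varepsilon\le U'_{\varepsilon,\delta}\le-\varepsilon$ recorded in Section~\ref{sec:2.1} makes $b$ globally Lipschitz with bounded derivative, so the drift difference quotients are dominated uniformly for small $r$ and $b'(\theta_{r}(x,s))\rightarrow b'(X(x,s))$ follows from continuity of $U'_{\varepsilon,\delta}$ and hypothesis~(i).

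The hard part will be this last limit in the nonlinear drift term, since it couples the $r$-continuity of the perturbed solutions, the regularity of $b'$, and the uniform moment estimates; once these are in place, the rest of the argument is term-by-term differentiation of the equation. I would emphasise that the differentiability of $r\mapsto X^{r}$ itself --- which would otherwise be the crux --- is assumed in hypothesis~(ii), so the content of the lemma is only the identification of the limiting equation satisfied by $h$. An equivalent route, which I would keep as a fallback, is to work with the mild formulation $X^{r}(t)=S(t)u+\int_{0}^{t}S(t-s)\bigl(2KX^{r}(s)+b(X^{r}(s))\bigr)ds+\int_{0}^{t}S(t-s)dW^{\rho}(r)(s)$, where $S(t)$ is the semigroup of $\frac{K}{4M^{2}}\Delta$ with Dirichlet boundary conditions; differentiating in $r$ there yields the mild form of the same equation, and the smoothing properties of $S(t)$ make the interchanges transparent.
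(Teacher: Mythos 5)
Your proposal is correct and follows essentially the same route as the paper: the paper's proof simply differentiates Equation~\eqref{eq:pur} with respect to $r$ at $r=0$ and identifies $\frac{d}{dr}X^{r}|_{r=0}=h$ and $\frac{d}{dr}W^{\rho}(r)|_{r=0}=\int_{0}^{\cdot}k\,ds$, exactly your computation. The only difference is that you carry out the differentiation in integral form and justify the limit interchanges (mean value theorem, boundedness and continuity of $b'$, dominated convergence), which the paper leaves formal; this is added rigor rather than a different argument.
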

	\begin{proof}
		By differentiating Equation~\eqref{eq:pur}~with respect to $r$ at $r = 0$, we get
		\begin{align*}
			d\frac{dX^{r}(x,t)}{dr}|_{r=0}&=\frac{K}{4M^{2}}\Delta \frac{dX^{r}(x,t)}{dr}|_{r=0}dt+2K \frac{dX^{r}(x,t)}{dr}|_{r=0}dt\\
			&+b^{\prime}(X(x,t))\frac{dX^{r}(x,t)}{dr}|_{r=0}dt	
			+d\frac{dW^{\rho}(r)(x,t)}{dr}|_{r=0},
		\end{align*}
		which means
		\begin{align*}
			\dot{h}(x,t)dt=\frac{K}{4M^{2}}\Delta h(x,t)dt+2K h(x,t) +b^{\prime}(X(x,t))h(x,t)dt+
			k(x,t)dt.
		\end{align*}
		The proof is completed. 
	\end{proof}
The following integration by parts formula is more or less known, see Theorem 5.2.1 in \cite{Zhu2022document} for example. For the reader's convenience, we will provide a sketch of the proof here. 
	\begin{theorem}\label{thm:4.3}
		Let $F\in\text{Cyl}(\mathcal{L})$ be a smooth cylindrical function that satisfies
		\begin{align*}
			F(X)=f(X(x_{1},T), X(x_{2},T), \cdots ,X(x_{n},T)),
		\end{align*}
		where $X(x,t)$ is the solution to SPDE~\eqref{eq:2.4}~and $f$ is a smooth real-valued function on $\mathbb{R}^{n}$. Then the integration by parts formula can be obtained
		\begin{align*}
			E[D_{h}F(X)]=E[F(X)\int_{0}^{T}\langle k(\cdot,t),dW^{\rho}(t)\rangle_{\mathcal{H}_{\rho}}],
		\end{align*}
		where $h\in\mathcal{PH}$ and $k(x,t)$ is determined by
		\begin{align*}
			\dot{h}(x,t)&=\frac{K}{4M^{2}}\Delta h(x,t)+2K h(x,t)+b^{\prime}(X(x,t))h(x,t)+k(x,t).
		\end{align*}
	\end{theorem}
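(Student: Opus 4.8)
The plan is to establish the integration by parts formula by a standard change-of-variables argument: use the perturbed family $\{X^{r}\}$ as a flow on the underlying Wiener space, differentiate at $r=0$, and exploit the Girsanov/Cameron--Martin theorem to transfer the $r$-derivative onto the exponential martingale generated by the perturbation drift $k$. First I would fix $h\in\mathcal{PH}$ of the form $\tilde h_{m}(x,t)=\sin(m\pi x)h_{m}(t)$, and let $k(x,t)$ be defined by the pull-back relation of Lemma~\ref{lem:23}, namely
\begin{align*}
	k(x,t)=\dot h(x,t)-\frac{K}{4M^{2}}\Delta h(x,t)-2K h(x,t)-b^{\prime}(X(x,t))h(x,t).
\end{align*}
Because $h$ is an eigenfunction of the Dirichlet Laplacian in $x$, the term $\Delta h$ is a bounded multiple of $h$, and by the regularity estimates of Section~\ref{sec:3} together with the bound $-1/\varepsilon\le U'_{\varepsilon,\delta}\le-\varepsilon$, the process $k$ lies in $\tilde{\mathcal A}\mathcal H_{\rho}$, so the stochastic integral $\int_{0}^{T}\langle k(\cdot,t),dW^{\rho}(t)\rangle_{\mathcal H_{\rho}}$ and the corresponding Doléans exponential are well defined.

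Next I would introduce, for $r\in(-\hat\epsilon,\hat\epsilon)$, the shifted noise $W^{\rho}(r)$ and the density
\begin{align*}
	\mathcal E_{r}:=\exp\Bigl(-r\int_{0}^{T}\langle k(\cdot,t),dW^{\rho}(t)\rangle_{\mathcal H_{\rho}}-\frac{r^{2}}{2}\int_{0}^{T}\Vert k(\cdot,t)\Vert^{2}_{\mathcal H_{\rho}}dt\Bigr),
\end{align*}
so that under $\mathcal E_{r}\,dP$ the process $W^{\rho}(r)$ has the law of $W^{\rho}$ under $P$; this requires checking Novikov's condition, which follows from the $L^{2}$- and $L^{4}$-moment estimates on $X$ and $\partial_{x}X$ in Theorems~\ref{thm:3.2} and~\ref{thm:diffestima}. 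Since $F(X^{r})$ is a functional of $W^{\rho}(r)$ through $X^{r}$, the invariance $E[F(X^{r})\mathcal E_{r}]=E[F(X)]$ holds for all $r$. Differentiating this identity in $r$ at $r=0$ and using $\frac{d}{dr}F(X^{r})|_{r=0}=D_{h}F(X)$ (which holds by the chain rule and property (ii) of the pull-back lemma, with $\frac{d}{dr}X^{r}|_{r=0}=h$), together with $\frac{d}{dr}\mathcal E_{r}|_{r=0}=-\int_{0}^{T}\langle k(\cdot,t),dW^{\rho}(t)\rangle_{\mathcal H_{\rho}}$, yields
\begin{align*}
	E[D_{h}F(X)]=E\Bigl[F(X)\int_{0}^{T}\langle k(\cdot,t),dW^{\rho}(t)\rangle_{\mathcal H_{\rho}}\Bigr],
\end{align*}
which is the claim.

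The main obstacle is justifying the interchange of differentiation and expectation, i.e.\ showing that $r\mapsto F(X^{r})\mathcal E_{r}$ is differentiable in $L^{1}(P)$ with the stated derivative, and that the flow map $r\mapsto X^{r}(x,t)$ is $r$-differentiable with derivative $h$ as asserted in hypothesis (ii) of Lemma~\ref{lem:23}. For the first point I would control $\sup_{|r|<\hat\epsilon}|\partial_{r}(F(X^{r})\mathcal E_{r})|$ by an integrable envelope, using that $f$ is smooth with controlled growth (or by a cutoff/truncation reducing to bounded $f$), the moment bounds of Section~\ref{sec:3}, and uniform integrability of the exponential family $\{\mathcal E_{r}\}$; for the second point, one differentiates the mild formulation of \eqref{eq:pur} in $r$, obtaining a linear SPDE for $\partial_{r}X^{r}$ whose well-posedness and continuity in $r$ follow from the same semigroup estimates used in the proof of Theorem~\ref{thm:61}, and one checks that at $r=0$ this linear equation coincides with the pull-back equation for $h$. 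Since the paper explicitly states this is a sketch of an essentially known result, it suffices to indicate these steps and refer to Theorem~5.2.1 in \cite{Zhu2022document} for the routine technical details.
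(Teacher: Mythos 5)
Your overall route is the paper's: define $k$ by the pull-back relation of Lemma~\ref{lem:23}, shift the noise, invoke Girsanov to obtain the invariance $E[F(X)]=E[\mathcal{E}_{r}F(X^{r})]$, and differentiate at $r=0$. The one step that does not hold as written is your justification that $\mathcal{E}_{r}$ is a true martingale: Novikov's criterion requires $E\bigl[\exp\bigl(\tfrac{r^{2}}{2}\int_{0}^{T}\Vert k(\cdot,t)\Vert^{2}_{\mathcal{H}_{\rho}}dt\bigr)\bigr]<\infty$, i.e.\ \emph{exponential} integrability of the $\mathcal{H}_{\rho}$-norm of $k$, whereas Theorems~\ref{thm:3.2} and~\ref{thm:diffestima} provide only second and fourth moments of $L^{2}$-norms of $X$ and $\partial_{x}X$. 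Moreover $\Vert\phi\Vert_{\mathcal{H}_{\rho}}=\Vert Q_{\rho}^{-1/2}\phi\Vert_{L^{2}}$ with $Q_{\rho}^{-1/2}$ unbounded, so the pointwise bound $-1/\varepsilon\leq U^{\prime}_{\varepsilon,\delta}\leq-\varepsilon$ controls $\Vert b^{\prime}(X(\cdot,t))h(\cdot,t)\Vert_{L^{2}}$ but not its $\mathcal{H}_{\rho}$-norm; for the same reason your claim that $k\in\tilde{\mathcal{A}}\mathcal{H}_{\rho}$ ``follows from the regularity estimates of Section~\ref{sec:3}'' is not yet substantiated.

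The paper sidesteps exactly this difficulty by localization: it introduces the stopping time $\tilde{\tau}_{\tilde{C}}=\inf\{t\in[0,T]\mid\Vert k(\cdot,t)\Vert^{2}_{\mathcal{H}_{\rho}}>\tilde{C}\}$, builds the Girsanov density from the stopped drift $k(\cdot,s\wedge\tilde{\tau}_{\tilde{C}})$ (so Novikov is immediate, the drift being bounded by $\tilde{C}$), derives the integration by parts identity with the stopped $k$ and the correspondingly perturbed direction $\hat{h}$, and only then lets $\tilde{C}\rightarrow+\infty$. To complete your argument you should either insert this truncation-and-limit step or genuinely prove the exponential integrability you assert. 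The remaining ingredients of your sketch --- the invariance identity via uniqueness in law, the differentiation at $r=0$, and the flagged issues of differentiating under the expectation and of the $r$-differentiability of $X^{r}$ --- coincide with the paper's proof, which likewise treats them only in sketch form with a reference to Theorem~5.2.1 of \cite{Zhu2022document}.
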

	\begin{proof}
		By Definition \ref{def:tanpro} and Lemma \ref{lem:23}, we know that for any $h\in\mathcal{PH}$, the following equation holds
		\begin{align*}
			k(x,t)=\dot{h}(x,t)-\frac{K}{4M^{2}}\Delta h(x,t)-2K h(x,t) -b^{\prime}(X(x,t))h(x,t).
		\end{align*}
		For any positive constant  $\tilde{C}\in\mathbb{R}$, we define the $\mathcal{F}_{t}$-stopping time
	\begin{align*}
		\tilde{\tau}_{\tilde{C}}:=\inf\{t\in[0,T]\mid\Vert k(\cdot,t)\Vert_{\mathcal{H}_{\rho}}^{2}> \tilde{C} \}.
	\end{align*}
	Denote 
	\begin{align*}
		\tilde{Z}_{t}(r)=:\exp\left[-r\int_{0}^{t}\langle k(\cdot,s\wedge\tilde{\tau}_{\tilde{C}}),dW^{\rho}(s)\rangle_{\mathcal{H}_{\rho}}-\frac{r^{2}}{2}\int_{0}^{t}\Vert k(\cdot,s\wedge\tilde{\tau}_{\tilde{C}})\Vert^{2}_{\mathcal{H}_{\rho}}ds\right].
	\end{align*}
	By Novikov’s criterion, we know that for each $r\in(-\hat{\epsilon},\hat{\epsilon})$, the process $\tilde{Z}_{t}(r)$ satisfies $E[\tilde{Z}_{T}(r)]=1$. According to Theorem 10.14 in \cite{Da2014book}, we know that under the probability $d\tilde{Q}=\tilde{Z}_{T}(r)dP$,
		 the process
		 \begin{align*}
		 	\tilde{W}^{\rho}(r)(x,t):=W^{\rho}(x,t)+r\int_{0}^{t}k(x,s\wedge\tilde{\tau}_{\tilde{C}})ds, t\in[0,T],
		 \end{align*}
		 is a Wiener process, with $Q_{\rho}$ as the covariance operator. Let $\tilde{X}^{r}(x,t)$ be the solution to~\eqref{eq:pur}~with $\tilde{W}^{\rho}(r)(x,t)$. According to the uniqueness of distribution for SPDE~\eqref{eq:2.4}, we have $E[F(X)]=E[\tilde{Z}_{T}(r)F(\tilde{X}^{r})]$. Taking the derivative with respect to $r$ at $r=0$, we get
		  \begin{align*}
		  0=&E[\frac{d}{dr}\tilde{Z}_{T}(r)|_{r=0}F(X)]+E[\frac{d}{dr}F(\tilde{X}^{r})|_{r=0}]
		  =-E[F(X)\int_{0}^{T}\langle k(\cdot,s\wedge\tilde{\tau}_{\tilde{C}}),dW^{\rho}(s)\rangle_{\mathcal{H}_{\rho}}]+E[D_{\hat{h}}F],
		  \end{align*}
		  where $\hat{h}(x,t)$ satisfies
		  \begin{align*}
		  	\dot{\hat{h}}(x,t)=\frac{K}{4M^{2}}\Delta \hat{h}(x,t)+2K \hat{h}(x,t) +b^{\prime}(X(x,t))\hat{h}(x,t)+k(x,t\wedge\tilde{\tau}_{\tilde{C}}).
		  \end{align*}
		  Consequently, by taking the limit as $\tilde{C}\rightarrow+\infty$, we obtain the integration by parts formula for any $h\in\mathcal{PH}$ 
		  \begin{align*}
		 	E[D_{h}F]=E[F(X)\int_{0}^{T}\langle k(\cdot,s),dW^{\rho}(s)\rangle_{\mathcal{H}_{\rho}}].
		 \end{align*}
	\end{proof}

Now, we are going to establish the Clark-Ocone-Haussmann formula for cylindrical functions. To prepare for this, we first introduce the following definitions related to the operators to be used in deriving the Clark-Ocone-Haussmann formula. Consider the function space
\begin{align*}
	L_{a,n}^{2}:=
	\{j:\Omega\times[0,T]\rightarrow\mathbb{R}^{n}\mid j(t)\in\mathcal{F}_{t}\ \text{for all}\ t\in[0,T]\ \text{and} \ E[\int_{0}^{T}\Vert j(t)\Vert_{\mathbb{R}^{n}}^{2}dt]<+\infty\}.
\end{align*}	
Define the operator $\mathcal{A}$ on $L_{a,n}^{2}$
\begin{align}\label{eq:operator}
	(\mathcal{A} r)(t):=r(t)-A(t)\int_{0}^{t}r(s)ds, r\in L_{a,n}^{2},
\end{align}
where $A(t)$ is an $n\times n$ symmetric matrix with elements 
\begin{align}\label{eq:matrix}
	A_{ij}(t):=\langle  b^{\prime}\left(X\left( t\right)\right)e_{l_{j}},e_{l_{i}}\rangle_{L^{2}}-\frac{(l_{i}\pi)^{2}}{4M^{2}}K\tilde{\delta}_{ij}+2K\tilde{\delta}_{ij}, 1\leq i,j\leq n.
\end{align}
Here, $l_{i},l_{j}$ are positive integers, and $\tilde{\delta}_{ij}$ denotes the Kronecker function, i.e. $\tilde{\delta}_{ij}=1$ if $i=j$ and 0 otherwise. For $s\in[0,T]$, let $\hat{M}(s)$ denote the $n\times n$ matrix satisfying the following equation
	\begin{align}\label{eq:elemsol}
	\frac{d}{ds}\hat{M}(s)=A(s)\hat{M}(s),
\end{align}
with the initial condition $\hat{M}(0)=\text{I}_{d}$, where $\text{I}_{d}$ is the identity matrix. Let
\begin{align*}
	l(t):=(\mathcal{A} r)(t)=r(t)-A(t)\int_{0}^{t}r(s)ds,
\end{align*}
then $r(t)=l(t)+A(t)\int_{0}^{t}r(s)ds$. Since $\hat{M}(s)$ satisfies Equation~\eqref{eq:elemsol}, we have \begin{align*}
	\int_{0}^{t}r(s)ds=\hat{M}(t)\int_{0}^{t}\hat{M}^{-1}(s)l(s)ds,
\end{align*}
and
\begin{align}\label{eq:opeinvers}
	(\mathcal{A}^{-1}l)(t)=r(t)=l(t)+A(t)\hat{M}(t)\int_{0}^{t}\hat{M}^{-1}(s)l(s)ds,
\end{align}
which means the operator $\mathcal{A}$ is invertible. 
 Now, we shall determine the explicit expression of the operator $	(\mathcal{A}^{\star})^{-1}$, where $\mathcal{A}^{\star}$ is the dual operator of $\mathcal{A}$ in $L_{a,n}^{2}$. Note that from Equations~\eqref{eq:operator},~\eqref{eq:matrix},~\eqref{eq:elemsol},~\eqref{eq:opeinvers}, the matrices $A(t),\hat{M}(t)$ and the operators $\mathcal{A},\mathcal{A}^{\star},\mathcal{A}^{-1}$, introduced as above, depend on the choice of $l_{1},\cdots,l_{n}$.
	\begin{lemma}\label{lem:opera}
		 For the operator $\mathcal{A}$ defined in~\eqref{eq:operator}, the operator $	(\mathcal{A}^{\star})^{-1}$ exists and for any $l\in L_{a,n}^{2}$	
		\begin{align}\label{eq:inveroperator}
			((\mathcal{A}^{\star})^{-1}l)(s)=&l(s)+E\left[(\hat{M}^{*})^{-1}(s)\int_{s}^{T}\hat{M}^{*}(\tau)A(\tau)l(\tau)d\tau|\mathcal{F}_{s}\right],
		\end{align}
		where $\hat{M}(t)$ satisfies Equation~\eqref{eq:elemsol}~and $\hat{M}^{*}(t)$ denotes the transpose of the matrix $\hat{M}(t)$. For any positive integer $N$, we define the $\mathcal{F}_{t}$-stopping time
		\begin{align*}
			\tau_{N}:=\inf\{t\in[0,T]\mid\Vert \partial_{x}X(t)\Vert_{L^{2}}> N \}.
		\end{align*}
		For $s\leq T\wedge\tau_{N}$, let 
		$$J(s):=\hat{M}(T\wedge\tau_{N})\hat{M}^{-1}(s),J(T\wedge\tau_{N})=\text{I}_{d},$$
		where $\text{I}_{d}$ is the identity matrix. Then for $s\leq T\wedge\tau_{N}$, there are  $l_{1}\leq l_{2}\cdots\leq l_{n}$ such that for any $y\in\mathbb{R}^{n}$
		\begin{align}\label{eq:opestimat}
			\Vert J^{*}(s)y\Vert^{2}_{\mathbb{R}^{n}}\leq&e^{(1-\gamma-\varepsilon+4K)(T\wedge\tau_{N}-s)}\Vert y\Vert^{2}_{\mathbb{R}^{n}}.
		\end{align}
	\end{lemma}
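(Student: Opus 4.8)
```latex
\noindent\textbf{Proof proposal.} The plan is to treat the three assertions of the lemma in turn, each being a fairly direct consequence of linear-algebra manipulations combined with the structure of the matrix $A(t)$.

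\emph{Step 1: existence and formula for $(\mathcal{A}^{\star})^{-1}$.} First I would compute $\mathcal{A}^{\star}$ explicitly. Since for $r,l\in L_{a,n}^{2}$ we have $\langle \mathcal{A} r,l\rangle=E\int_{0}^{T}\langle r(t)-A(t)\int_{0}^{t}r(s)ds,\,l(t)\rangle_{\mathbb{R}^{n}}dt$, interchanging the order of integration in the double integral (Fubini, justified by the $L^2$-integrability built into $L_{a,n}^{2}$ and the boundedness of $A$ on the stopped interval) moves $\int_{0}^{t}$ to $\int_{s}^{T}$, and using the tower property to make the integrand $\mathcal{F}_{s}$-adapted yields $(\mathcal{A}^{\star}l)(s)=l(s)-E[\int_{s}^{T}A^{*}(\tau)l(\tau)d\tau\mid\mathcal{F}_{s}]$. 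Then I would verify directly that the operator on the right-hand side of~\eqref{eq:inveroperator} is a two-sided inverse: plugging it into $\mathcal{A}^{\star}$ and differentiating the resulting conditional-expectation term in $s$, the ODE~\eqref{eq:elemsol} for $\hat{M}$ (equivalently, $\frac{d}{ds}(\hat{M}^{*})^{-1}=-(\hat{M}^{*})^{-1}A^{*}$) makes the terms telescope. This is the routine but bookkeeping-heavy part; care is needed because $A(t)$ is only symmetric, not constant, so $\hat{M}^{*}\neq\hat{M}^{-1}$ in general and the adjoint genuinely involves the transpose.

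\emph{Step 2: the stopped matrix $J(s)$ and the exponential bound~\eqref{eq:opestimat}.} Here the key is the choice of $l_{1}\le\cdots\le l_{n}$: by taking the spatial modes large (and ordered), the diagonal contribution $-\frac{(l_i\pi)^2}{4M^2}K$ in~\eqref{eq:matrix} is strongly negative, which will be used to absorb the off-diagonal and the $2K$ terms. I would estimate $\frac{d}{ds}\Vert J^{*}(s)y\Vert_{\mathbb{R}^n}^2$. Since $\frac{d}{ds}J(s)=-J(s)A(s)$ (from $J(s)=\hat M(T\wedge\tau_N)\hat M^{-1}(s)$ and $\frac{d}{ds}\hat M^{-1}=-\hat M^{-1}A$), one gets $\frac{d}{ds}\Vert J^{*}(s)y\Vert^2=-2\langle A(s)J^{*}(s)y,\,J^{*}(s)y\rangle_{\mathbb{R}^n}$ after using symmetry of $A$. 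Now I bound the quadratic form $\langle A(s)z,z\rangle$ from below: the term $\langle b'(X(s))e_{l_j},e_{l_i}\rangle$ contributes, via $b'=U'_{\varepsilon,\delta}+(1-\gamma)$ and the sign bound $-1/\varepsilon\le U'_{\varepsilon,\delta}\le -\varepsilon$ established in Section~\ref{sec:2}, something controlled by $(1-\gamma-\varepsilon)$ plus a Hilbert–Schmidt-type error that is killed by the negative diagonal once the $l_i$ are chosen large enough (this is where $\tau_N$ and the $\partial_x X$ bound enter, since $b'(X(s))$ as an operator depends on $X(s)$); the $2K\tilde\delta_{ij}$ term contributes $2K$, and the $-\frac{(l_i\pi)^2}{4M^2}K$ term is $\le 0$ and can only help. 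Altogether $\langle A(s)z,z\rangle\ge -\tfrac12(1-\gamma-\varepsilon+4K)\Vert z\Vert^2$ for suitable $l_i$, whence $\frac{d}{ds}\Vert J^{*}(s)y\Vert^2\le (1-\gamma-\varepsilon+4K)\Vert J^{*}(s)y\Vert^2$; integrating from $s$ to $T\wedge\tau_N$ with terminal value $\Vert y\Vert^2$ (since $J(T\wedge\tau_N)=\mathrm{I}_d$) and applying Gronwall (Lemma~\ref{lem:3.2}) backward in time gives~\eqref{eq:opestimat}.

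\emph{Main obstacle.} I expect the delicate point to be the lower bound on $\langle A(s)z,z\rangle$: one must show that the off-diagonal entries $\langle b'(X(s))e_{l_j},e_{l_i}\rangle$ for $i\ne j$, which are genuinely random (depending on $X(s)$) and only controlled through $\Vert\partial_x X(s)\Vert_{L^2}\le N$ on $\{s\le\tau_N\}$, can be dominated by the gap between the negative diagonal eigenvalue contributions $-\frac{(l_i\pi)^2 K}{4M^2}$ for an appropriate increasing choice of $l_1,\dots,l_n$ — essentially a quantitative diagonal-dominance argument for a random symmetric matrix. Everything else (Fubini for the adjoint, the telescoping verification of the inverse, and the Gronwall step) is routine once this spectral estimate is in place.
```
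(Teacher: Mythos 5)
Your Step 1 is fine and is essentially the paper's route: the paper obtains \eqref{eq:inveroperator} by dualizing the explicit inverse \eqref{eq:opeinvers} (Fubini plus the tower property), and your formula for $\mathcal{A}^{\star}$ agrees with the one recorded later in the proof of Theorem \ref{thm:finiteresp}; verifying that the right-hand side of \eqref{eq:inveroperator} is a two-sided inverse is equivalent bookkeeping. The genuine problem is the direction of the spectral estimate in Step 2. Since $\gamma>1$ and $\varepsilon,K$ are small, $1-\gamma-\varepsilon+4K<0$, so \eqref{eq:opestimat} is a decay estimate backward in time from $J^{*}(T\wedge\tau_{N})=\text{I}_{d}$. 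From $\frac{d}{ds}\Vert J^{*}(s)y\Vert^{2}_{\mathbb{R}^{n}}=-2\langle A(s)J^{*}(s)y,J^{*}(s)y\rangle_{\mathbb{R}^{n}}$, what is needed is the \emph{upper} bound $\langle A(s)z,z\rangle_{\mathbb{R}^{n}}\le\frac{1-\gamma-\varepsilon+4K}{2}\Vert z\Vert^{2}_{\mathbb{R}^{n}}$ (quantitative negative definiteness of $A(s)$ on $[0,T\wedge\tau_{N}]$), which gives $\frac{d}{ds}\Vert J^{*}(s)y\Vert^{2}_{\mathbb{R}^{n}}\ge-(1-\gamma-\varepsilon+4K)\Vert J^{*}(s)y\Vert^{2}_{\mathbb{R}^{n}}$, hence $s\mapsto e^{(1-\gamma-\varepsilon+4K)s}\Vert J^{*}(s)y\Vert^{2}_{\mathbb{R}^{n}}$ is nondecreasing and \eqref{eq:opestimat} follows by comparing $s$ with $T\wedge\tau_{N}$; this is exactly the paper's computation \eqref{eq:stopoperestimete}. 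You instead assert the lower bound $\langle A(s)z,z\rangle\ge-\frac{1}{2}(1-\gamma-\varepsilon+4K)\Vert z\Vert^{2}$, i.e. $A(s)\ge\frac{\gamma+\varepsilon-1-4K}{2}\text{I}_{d}>0$. That is false here: since $b^{\prime}\le 1-\gamma-\varepsilon$ pointwise, the diagonal entries of \eqref{eq:matrix} satisfy $A_{ii}(s)\le 1-\gamma-\varepsilon+2K-\frac{(l_{i}\pi)^{2}K}{4M^{2}}<0$ in the relevant regime, and your own device of making this diagonal very negative drives the smallest eigenvalue to $-\infty$. Moreover, even granting your inequality, the resulting bound $\frac{d}{ds}\Vert J^{*}(s)y\Vert^{2}\le(1-\gamma-\varepsilon+4K)\Vert J^{*}(s)y\Vert^{2}$, integrated backward from the terminal value $\Vert y\Vert^{2}$, yields only the lower bound $\Vert J^{*}(s)y\Vert^{2}\ge e^{-(1-\gamma-\varepsilon+4K)(T\wedge\tau_{N}-s)}\Vert y\Vert^{2}\ge\Vert y\Vert^{2}$, which is the opposite of \eqref{eq:opestimat}. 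So this is a genuine directional error, not a cosmetic sign slip.

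Separately, your mechanism for choosing the $l_{i}$ differs from the paper's. The paper never exploits the term $-\frac{(l_{i}\pi)^{2}K}{4M^{2}}$ (it is simply dropped as nonpositive); instead it integrates by parts, $\int_{0}^{1}b^{\prime}(X(x,s))\cos(k\pi x)dx=-\frac{1}{k\pi}\int_{0}^{1}b^{\prime\prime}(X(x,s))\partial_{x}X(x,s)\sin(k\pi x)dx$, bounds this by $\frac{\hat{J}(1-\varepsilon^{2})}{k\pi\varepsilon\delta}\Vert\partial_{x}X(s)\Vert_{L^{2}}\le\frac{\hat{J}(1-\varepsilon^{2})N}{k\pi\varepsilon\delta}$ on $[0,T\wedge\tau_{N}]$ (this is where $\tau_{N}$ actually enters), and then chooses widely separated modes, $|l_{i}-l_{j}|\ge\frac{8N(n+1)\hat{J}}{|1-\gamma-\varepsilon|\varepsilon\delta}$, so that every off-diagonal entry and the oscillatory part of each diagonal entry is at most $\frac{|1-\gamma-\varepsilon|}{4\pi(n+1)}$; diagonal dominance then gives $\langle A(s)z,z\rangle\le\left(\frac{1-\gamma-\varepsilon}{2}+2K\right)\Vert z\Vert^{2}$. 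Your alternative of taking all $l_{i}$ so large that the Laplacian eigenvalue term dominates would also deliver this upper bound, since off-diagonal entries are bounded by $\Vert b^{\prime}\Vert_{\infty}\le\frac{1}{\varepsilon}+\gamma-1$ via Cauchy--Schwarz and a Gershgorin bound suffices once $\frac{(l_{1}\pi)^{2}K}{4M^{2}}\ge n\Vert b^{\prime}\Vert_{\infty}$ (using $K>0$; for this step $\tau_{N}$ and the integration by parts would not even be needed). So your choice of modes is salvageable and arguably simpler, but the proof as written fails at the spectral estimate and at the Gronwall step, both of which must be reversed as described above.
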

	\begin{proof}
		Firstly, we calculate the operator $(\mathcal{A}^{\star})^{-1}$. Note that $(\mathcal{A}^{\star})^{-1}=(\mathcal{A}^{-1})^{\star}$. By Equation~\eqref{eq:opeinvers}, we know that for any $r,l\in L_{a,n}^{2}$,
		\begin{align*}
			E[\int_{0}^{T}\langle((\mathcal{A}^{\star})^{-1}r)(s),l(s)\rangle_{\mathbb{R}^{n}} ds]&=E[\int_{0}^{T}\langle((\mathcal{A}^{-1})^\star r)(s),l(s)\rangle_{\mathbb{R}^{n}} ds]=E[\int_{0}^{T}\langle r(s),(\mathcal{A}^{-1}l)(s)\rangle_{\mathbb{R}^{n}} ds]\\
			&=E\left[\int_{0}^{T}\langle r(s),\left(l(s)+A(s)\hat{M}(s)\int_{0}^{s}\hat{M}^{-1}(\tau)l(\tau)d\tau\right)\rangle_{\mathbb{R}^{n}}ds\right] \\
			&=E\left[\int_{0}^{T}\langle r(s),l(s)\rangle_{\mathbb{R}^{n}}ds\right]\\
			&+E\left[\int_{0}^{T}\langle (\hat{M}^{*})^{-1}(s)\int_{s}^{T}\hat{M}^{*}(\tau)A(\tau)r(\tau)d\tau, l(s)\rangle_{\mathbb{R}^{n}}ds\right],
		\end{align*}
		which implies
		\begin{align*}
			((\mathcal{A}^{\star})^{-1}r)(s)=&r(s)+E\left[(\hat{M}^{*})^{-1}(s)\int_{s}^{T}\hat{M}^{*}(\tau)A(\tau)r(\tau)d\tau|\mathcal{F}_{s}\right].
		\end{align*}
		Moreover, by the definition of $A(s)$, for $1\leq i,j\leq n$
	\begin{align*}
	A_{ij}(s)=\langle  b^{\prime}\left(X\left( s\right)\right)e_{l_{j}},e_{l_{i}}\rangle_{L^{2}}-\frac{(l_{i}\pi)^{2}}{4M^{2}}K\tilde{\delta}_{ij}+2K\tilde{\delta}_{ij},
\end{align*}
and
		\begin{align*}
			\langle  b^{\prime}\left(X\left( s\right)\right)e_{l_{j}},e_{l_{i}}\rangle_{L^{2}}=&2\int_{0}^{1} b^{\prime}\left(X\left(x, s\right)\right)\sin(l_{j}\pi x)\sin(l_{i}\pi x)dx\\
			=&\int_{0}^{1} b^{\prime}\left(X\left(x, s\right)\right)\cos((l_{i}-l_{j})\pi x)dx-\int_{0}^{1} b^{\prime}\left(X\left(x, s\right)\right)\cos((l_{i}+l_{j})\pi x)dx.
		\end{align*}
		 We know that for $i=j$	
		\begin{align*}
			A_{ii}(s)=&\int_{0}^{1} b^{\prime}\left(X\left(x, s\right)\right)dx-\int_{0}^{1} b^{\prime}\left(X\left(x, s\right)\right)\cos(2l_{i}\pi x)dx-\frac{(l_{i}\pi)^{2}}{4M^{2}}K+2K\\
			\leq& 1-\gamma-\varepsilon-\frac{(l_{i}\pi)^{2}}{4M^{2}}K+2K-\int_{0}^{1} b^{\prime}\left(X\left(x, s\right)\right)\cos(2l_{i}\pi x)dx.
		\end{align*}	
		However, for $k> 0$	
		\begin{align*}
			\int_{0}^{1} b^{\prime}\left(X\left(x, s\right)\right)\cos(k\pi x)dx=& \frac{1}{k\pi}	\int_{0}^{1} b^{\prime}\left(X\left(x, s\right)\right)d\sin(k\pi x)\\
			=&\frac{1}{k\pi}b^{\prime}\left(X\left(x, s\right)\right)\sin(k\pi x)|_{x=0}^{x=1}-\frac{1}{k\pi}\int_{0}^{1} b^{\prime\prime}\left(X\left(x, s\right)\right)\partial_{x}X\left(x,s\right)\sin(k\pi x)dx\\
			=&-\frac{1}{k\pi}\int_{0}^{1} b^{\prime\prime}\left(X\left(x, s\right)\right)\partial_{x}X\left(x,s\right)\sin(k\pi x)dx.
		\end{align*}
		Then
		\begin{align*}
			\left|\int_{0}^{1} b^{\prime}\left(X\left(x, s\right)\right)\cos(k\pi x)dx\right|=& \frac{1}{k\pi}	\left|\int_{0}^{1} b^{\prime\prime}\left(X\left(x, s\right)\right)\partial_{x}X\left(x,s\right)\sin(k\pi x)dx\right|\\
			=& \frac{\hat{J}}{k\pi}\frac{1-\varepsilon^{2}}{\varepsilon\delta}	\left|\int_{0}^{1}\textbf{I}_{o_{\delta}(1)\cup o_{\delta}(-1)}(X(x,s)) e^{-\frac{1}{1-\left(\frac{X(x,s)-1}{\delta}\right)^{2}}}\partial_{x}X\left(x,s\right)\sin(k\pi x)dx\right|\\
			\leq&\frac{\hat{J}}{k\pi}\frac{1-\varepsilon^{2}}{\varepsilon\delta}\Vert \partial_{x}X(s)\Vert_{L^{2}},
		\end{align*}
		where $o_{\delta}(x_{0}):=\{x\in\mathbb{R}| -\delta<x-x_{0}<\delta\}$ for $x_{0}\in\mathbb{R}$ and $\textbf{I}_{A}$ denotes the indicator function for $A\subset\mathbb{R}$
		\begin{equation*}
			\textbf{I}_{A}(x):=\begin{cases}
				1 &x\in A,\\
				0& x\notin A.
			\end{cases}
		\end{equation*}
		Recall that for any positive integer $N$,
		\begin{align*}
			\tau_{N}=\inf\{t\in[0,T]\mid\Vert \partial_{x}X(t)\Vert_{L^{2}}> N \}.
		\end{align*}
		Fixing the parameters $\varepsilon,\delta,\gamma,\rho$ and $N$, we can choose positive integers $l_{1}<\cdots<l_{n}$ such that for any $i<j$,
		\begin{align*}
			|l_{i}-l_{j}|\geq \frac{8N(n+1)\hat{J}}{|1-\gamma-\varepsilon|\varepsilon\delta}.
		\end{align*}
		Then for $s\in [0,T\wedge\tau_{N}]$, we have 
		\begin{align*}
			|A_{ij}(s)|\leq \frac{|1-\gamma-\varepsilon|}{4\pi(n+1)},\quad i\neq j.
		\end{align*}
		For $i=j$
		\begin{align*}
			A_{ii}(s)\leq 1-\gamma-\varepsilon-\frac{(l_{i}\pi)^{2}}{4M^{2}}K+2K+ \frac{|1-\gamma-\varepsilon|\varepsilon\delta}{4N(n+1)}\frac{1}{\pi}\frac{1-\varepsilon^{2}}{\varepsilon\delta}N.
		\end{align*}
		Thus, we know that for $s\in [0,T\wedge\tau_{N}]$
		\begin{align*}
			A(s)\leq \left( 1-\gamma-\varepsilon+2K+(2n-1) \frac{|1-\gamma-\varepsilon|}{4\pi(n+1)}\right)\text{I}_{d}\leq\left( \frac{1-\gamma-\varepsilon}{2}+2K\right)\text{I}_{d},
		\end{align*}
		in the order of the non-negative definiteness. Because for any $s\leq T\wedge\tau_{N}$, 
		$$J(s)=\hat{M}(T\wedge\tau_{N})\hat{M}^{-1}(s),J(T\wedge\tau_{N})=\text{I}_{d},$$
		and
		\begin{align*}
		0=\frac{d}{ds}(\hat{M}(s)\hat{M}^{-1}(s))=&(\frac{d}{ds}\hat{M}(s))\hat{M}^{-1}(s)+\hat{M}(s)\frac{d}{ds}(\hat{M}^{-1}(s))\\
		=&A(s)+\hat{M}(s)\frac{d}{ds}(\hat{M}^{-1}(s)),
		\end{align*}
		we have
		\begin{align*}
			\frac{d}{ds}J(s)=&\hat{M}(T\wedge\tau_{N})\frac{d}{ds}(\hat{M}^{-1}(s))=-\hat{M}(T\wedge\tau_{N})\hat{M}^{-1}(s)A(s)=-J(s)A(s),
		\end{align*}
		and
			\begin{align*}
			\frac{d}{ds}J^{*}(s)=&-A(s)J^{*}(s).
		\end{align*}
		Therefore, for any $y\in\mathbb{R}^{n}$
		\begin{equation}\label{eq:stopoperestimete}
			\begin{split}
				&\frac{d}{ds}(e^{(1-\gamma-\varepsilon+4K)s}\Vert J^{*}(s)y\Vert^{2}_{\mathbb{R}^{n}})\\
				=&(1-\gamma-\varepsilon+4K)e^{-(1-\gamma-\varepsilon+4K)s}\Vert J^{*}(s)y\Vert^{2}_{\mathbb{R}^{n}}+2e^{-(1-\gamma-\varepsilon+4K)s}\langle \frac{d}{ds}J^{*}(s)y,J^{*}(s)y\rangle_{\mathbb{R}^{n}}\\
				=&(1-\gamma-\varepsilon+4K)e^{-(1-\gamma-\varepsilon+4K)s}\Vert J^{*}(s)y\Vert^{2}_{\mathbb{R}^{n}}-2e^{-(1-\gamma-\varepsilon+4K)s}\langle A(s)J^{*}(s)y,J^{*}(s)y\rangle_{\mathbb{R}^{n}}\\
				=&2e^{-(1-\gamma-\varepsilon+4K)s}\langle (\frac{1-\gamma-\varepsilon+4K}{2}\text{I}_{d}-A(s))J^{*}(s)y,J^{*}(s)y\rangle_{\mathbb{R}^{n}}\geq 0.
			\end{split}
		\end{equation}	
		Finally, we get
		\begin{align*}
			\Vert J^{*}(s)y\Vert^{2}_{\mathbb{R}^{n}}\leq&e^{(1-\gamma-\varepsilon+4K)(T\wedge\tau_{N}-s)}\Vert y\Vert^{2}_{\mathbb{R}^{n}}.
		\end{align*}
	\end{proof}
Building on the previous preparation, we are now in a position to derive the Clark-Ocone-Haussmann formula. 
	\begin{theorem}\label{thm:finiteresp}
		Suppose that $X(x,t)$ denotes the solution to SPDE~\eqref{eq:2.4}, and let $F\in\text{Cyl}(\mathcal{L})$ be a cylindrical function that satisfies
		\begin{align}\label{eq:cyl}
			F(X)=f(X(x_{1},T), X(x_{2},T), \cdots ,X(x_{n},T)),
		\end{align}
		where $f$ is a smooth real-valued function on $\mathbb{R}^{n}$. Then $F(X)$ can be represented as
		\begin{align}\label{eq:3}
			F(X)=E[F]+\sum_{i=1}^{n}\int_{0}^{T}  H^{F}(x_{l_{i}},s)dB^{l_{i}}(s),
		\end{align}
		where $H^{F}(x_{l_{i}},s)$ denotes the $i$-th component of  $\vec{H}^{F}(s)=(H^{F}(x_{l_{1}},s),\cdots,H^{F}(x_{l_{n}},s))^{T}$. Here, $\vec{H}^{F}(s)$ is an $\mathcal{F}_{t}$-predictable process determined by
				\begin{align*}
				\vec{H}^{F}(s)
				=&\mathbf{Q}^{-1}(\mathcal{A}^{\star})^{-1} E[\vec{D}^{F}(s)|\mathcal{F}_{s}],
			\end{align*}
		and $\mathbf{Q}^{-1},\vec{D}^{F}(s)$ will be defined in~\eqref{eq:vecdefq}.
	\end{theorem}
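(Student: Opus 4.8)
The plan is to obtain~\eqref{eq:3} by combining the martingale representation theorem with the integration by parts formula of Theorem~\ref{thm:4.3}, the role of the operators $\mathcal{A},\mathcal{A}^{\star}$ and the matrices $A(t),\hat{M}(t)$ being to invert the feedback that a perturbation of the driving noise produces through the linearized drift $b'(X)$. Since $F(X)\in L^{2}(\Omega,\mathcal{F}_{T},P)$ and, by~\eqref{eq:1.5}, $\mathcal{F}_{T}$ is generated by the independent Brownian motions $\{B^{k}\}_{k\ge1}$, the martingale representation theorem already provides predictable coefficients $H_{k}$ with $F(X)=E[F]+\sum_{k\ge1}\int_{0}^{T}H_{k}(s)\,dB^{k}(s)$ and $\sum_{k}E\int_{0}^{T}H_{k}(s)^{2}\,ds<+\infty$; the content of the theorem is to identify these coefficients on the block of modes $l_{1}<\cdots<l_{n}$ supplied by Lemma~\ref{lem:opera} and to put them in the closed form $\vec{H}^{F}(s)=\mathbf{Q}^{-1}(\mathcal{A}^{\star})^{-1}E[\vec{D}^{F}(s)\mid\mathcal{F}_{s}]$.

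For the identification I would test $F(X)$ against stochastic integrals. Fix a predictable $r\in L_{a,n}^{2}$ and let $h\in\mathcal{PH}$ be the spatially finite-mode perturbation $h(x,t)=\sum_{j=1}^{n}e_{l_{j}}(x)h_{j}(t)$ with $h_{j}(t)=\int_{0}^{t}r_{j}(s)\,ds$, so $h(x,0)=0$. Projecting the relation $\dot h=\tfrac{K}{4M^{2}}\Delta h+2Kh+b'(X)h+k$ of Lemma~\ref{lem:23} onto $e_{l_{i}}$ and using $\Delta e_{l_{i}}=-(l_{i}\pi)^{2}e_{l_{i}}$, the block projection $\vec k$ of $k$ satisfies $\dot{\vec h}(t)=A(t)\vec h(t)+\vec k(t)$ with $A(t)$ exactly the matrix~\eqref{eq:matrix}, i.e. $\vec k=\mathcal{A}r$ and $r=\mathcal{A}^{-1}\vec k$. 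On one side, the chain-rule computation carried out before Section~\ref{sec:4.1} (with $t=T$) gives $D_{h}F(X)=\sum_{i=1}^{n}\nabla_{i}f\,h(x_{i},T)=\int_{0}^{T}\langle\vec{D}^{F}(s),r(s)\rangle_{\mathbb{R}^{n}}\,ds$ for a suitable $\mathcal{F}_{T}$-measurable vector $\vec{D}^{F}$ built from $\nabla f$ and the spatial evaluation data, so that, $r$ being adapted, $E[D_{h}F]=\int_{0}^{T}E[\langle E[\vec{D}^{F}(s)\mid\mathcal{F}_{s}],r(s)\rangle_{\mathbb{R}^{n}}]\,ds$. On the other side, expanding $\langle k(\cdot,t),dW^{\rho}(t)\rangle_{\mathcal{H}_{\rho}}=\sum_{m\ge1}\lambda_{m}^{-1}k_{m}(t)\,dB^{m}(t)$ through~\eqref{eq:1.5} and inserting the martingale representation of $F$, the right-hand side of Theorem~\ref{thm:4.3} becomes $\sum_{m\ge1}\lambda_{m}^{-1}E[\int_{0}^{T}H_{m}(s)k_{m}(s)\,ds]$, which on the block $\{l_{1},\dots,l_{n}\}$ is a bilinear pairing of $\vec k=\mathcal{A}r$ with $(H_{l_{i}})_{i}$ carrying the weight $\mathbf{Q}$. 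Equating the two sides for every predictable $r$, hence for every $\vec k$ by invertibility of $\mathcal{A}$, stripping the integral against $r$, and taking the adjoint using the explicit formula~\eqref{eq:inveroperator} for $(\mathcal{A}^{\star})^{-1}$, yields $\mathbf{Q}\,\vec{H}^{F}(s)=(\mathcal{A}^{\star})^{-1}E[\vec{D}^{F}(s)\mid\mathcal{F}_{s}]$, which is the asserted identity.

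The main obstacle is that $A(t)$, and hence $\mathcal{A}$, $\hat{M}(t)$ and $(\mathcal{A}^{\star})^{-1}$, contains $b'(X(t))$, whose off-diagonal Fourier coefficients are controlled --- via the integration by parts in $x$ in the proof of Lemma~\ref{lem:opera} --- only in terms of $\|\partial_{x}X(t)\|_{L^{2}}$, which is not bounded uniformly in $\omega$. I would handle this exactly as in Lemma~\ref{lem:opera}: localize with the stopping times $\tau_{N}=\inf\{t:\|\partial_{x}X(t)\|_{L^{2}}>N\}$, on which $l_{1}<\cdots<l_{n}$ can be taken so widely separated that the off-diagonal entries of $A$ are negligible, $\mathcal{A},\mathcal{A}^{\star}$ are boundedly invertible and~\eqref{eq:opestimat} holds; carry out the identification on $[0,T\wedge\tau_{N}]$; and then let $N\to\infty$, using the moment bounds $E[\|\partial_{x}X(t)\|_{L^{2}}^{2}]$ and $E[\|\partial_{x}X(t)\|_{L^{2}}^{4}]\le\mathcal{R}_{1}(t)$ of Theorem~\ref{thm:diffestima} both to get $P(\tau_{N}<T)\to0$ and to pass to the limit in $L^{2}$. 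A secondary point requiring care is that modes $m\notin\{l_{1},\dots,l_{n}\}$ must not enter~\eqref{eq:3}; this is forced by restricting the admissible test directions to the chosen block and obtaining the representation as the limit of the associated finite-dimensional approximations, whose limit one checks does not depend on the block.
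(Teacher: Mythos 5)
Your overall route is the paper's: perturbations in $\mathcal{PH}$ fed through Lemma~\ref{lem:23} and the integration by parts formula of Theorem~\ref{thm:4.3}, projection onto the block $\{l_{1},\dots,l_{n}\}$ chosen in Lemma~\ref{lem:opera} (producing exactly the matrix $A$ of~\eqref{eq:matrix} and the operator $\mathcal{A}$), and a duality argument against arbitrary adapted $r=\vec{\dot h}$ yielding $(\mathcal{A}^{\star}\vec{\hat H}^{F})(s)=E[\vec{D}^{F}(s)\mid\mathcal{F}_{s}]$, hence $\vec{H}^{F}=\mathbf{Q}^{-1}(\mathcal{A}^{\star})^{-1}E[\vec{D}^{F}\mid\mathcal{F}_{s}]$. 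Two deviations are cosmetic: your $h=\sum_{j}e_{l_{j}}h_{j}$ is not literally in $\mathcal{PH}$ (the definition requires a single eigenmode), but by linearity one argues mode by mode exactly as the paper does; and the $\tau_{N}$-localization you import from Lemma~\ref{lem:opera} is not needed for the representation itself --- the paper uses the stopping time only for the estimate~\eqref{eq:opestimat} and later in the log-Sobolev proof.

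The genuine gap is in the identification step. You rightly start from the full martingale representation $F=E[F]+\sum_{k\geq1}\int_{0}^{T}H_{k}\,dB^{k}$, but the direction $k$ produced by a block-supported $h$ is \emph{not} block-supported: $k=\dot h-\frac{K}{4M^{2}}\Delta h-2Kh-b^{\prime}(X)h$, and $b^{\prime}(X)h$ has nonzero components $\langle b^{\prime}(X)e_{l_{j}},e_{m}\rangle_{L^{2}}$ for every $m$. Consequently $E[F\int_{0}^{T}\langle k,dW^{\rho}\rangle_{\mathcal{H}_{\rho}}]=\sum_{m\geq1}\lambda_{m}^{-1}E[\int_{0}^{T}H_{m}\langle k,e_{m}\rangle_{L^{2}}ds]$ contains, besides the block pairing you keep, cross terms of the form $-\sum_{m\notin\{l_{i}\}}\lambda_{m}^{-1}E[\int_{0}^{T}H_{m}\sum_{j}\langle b^{\prime}(X)e_{l_{j}},e_{m}\rangle_{L^{2}}h_{j}\,ds]$, which involve the unknown non-block coefficients $H_{m}$ and which you drop without justification. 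Your proposed remedy --- that restricting test directions to the block ``forces'' modes outside $\{l_{1},\dots,l_{n}\}$ not to enter~\eqref{eq:3} --- does not work: since $X(x_{i},T)$ depends on the whole noise, a general cylindrical $F$ has nonvanishing $H_{m}$ for $m$ outside the block, block test directions give no control over them, and they contaminate the very identity from which you want to read off $\vec{H}^{F}$. The paper sidesteps this by invoking the martingale representation directly in the finite form~\eqref{eq:3} (only the block Brownian motions appear) and substituting it into~\eqref{eq:martin}, so no cross terms arise in its computation; whether that invocation is itself fully justified is the paper's burden, but your version, which keeps the honest infinite representation, must either prove the cross terms vanish or otherwise reconcile them with~\eqref{eq:3}, and as written it does neither.
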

	\begin{proof}
		By martingale representation theorem for Brownian motion, there exists a predictable process $\vec{H}^{F}(t)$ such that Equation~\eqref{eq:3}~holds. Below we calculate the explicit expression of $\vec{H}^{F}(t)$. On the one hand, by Lemma \ref{lem:23} for any $\tilde{h}_{m}\in \mathcal{PH}$, we know that $\tilde{h}_{m}(x,t)=\sin(m\pi x)h_{m}(t)$ and 
		\begin{align*}
			\sin(m\pi x)\dot{h}_{m}(t)&=\dot{\tilde{h}}_m\left(x, t\right)\\
			&=\frac{K}{4 M^2} \partial_{xx} \tilde{h}_m\left(x, t\right)+2 K \tilde{h}_m\left(x, t\right)+ b^{\prime}\left(X\left(x, t\right)\right) \tilde{h}_m\left(x, t\right)+ k_{m}\left(x, t\right)\\
			&=\sin(m\pi x)\left(-\frac{Km^{2}\pi^{2}}{4M^2}h_{m}(t)+2Kh_{m}\left( t\right)+ b^{\prime}\left(X\left(x, t\right)\right) h_{m}\left( t\right)\right)+ k_{m}\left(x, t\right).
		\end{align*}
		Then by Theorem \ref{thm:4.3}, we know that
		\begin{equation}\label{eq:martin}
			\begin{split}
			E\left[D_{\tilde{h}_{m}} F\right]&=E[F(X)\int_{0}^{T}\langle k_{m}(\cdot,s),dW^{\rho}(s)\rangle_{\mathcal{H}_{\rho}}]\\
			&=E\left[F(X) \int_0^T\dot{h}_{m}(t)\langle\frac{e_{m}}{\sqrt{2}},dW^{\rho}(t)\rangle_{\mathcal{H}_{\rho}}\right]\\
			&-E\left[F(X) \int_0^Th_{m}(t)\langle  b^{\prime}\left(X\left(t\right)\right)\frac{e_{m}}{\sqrt{2}},dW^{\rho}(t)\rangle_{\mathcal{H}_{\rho}}\right]\\
			& -K  E\left[F(X)\int_0^T \left(-\frac{m^{2}\pi^{2}}{4 M^2}+2\right)h_{m}(t)\langle\frac{e_{m}}{\sqrt{2}} ,dW^{\rho}(t)\rangle_{\mathcal{H}_{\rho}}\right].
			\end{split}
		\end{equation}
	Recall that $Q_{\rho}^{-1}\sin(m\pi x)=e^{2m^{2}\pi^{2}\rho}\sin(m\pi x)$, we have
\begin{align*}
	\frac{1}{\sqrt{2}}\langle e_{m},dW^{\rho}(t)\rangle_{\mathcal{H}_{\rho}}=\frac{1}{\sqrt{2}}\langle Q_{\rho}^{-1}e_{m},dW^{\rho}(t)\rangle_{L^{2}}=\frac{1}{\sqrt{2}}\langle e^{2m^{2}\pi^{2}\rho}e_{m},dW^{\rho}(t)\rangle_{L^{2}}=\frac{1}{\sqrt{2}}e^{m^{2}\pi^{2}\rho}dB^{m}(t),
\end{align*}
and
\begin{align*}
	&\langle Q_{\rho}^{-1}\left(b^{\prime}\left(X\left( t\right)\right)e_{m}\right),e_{k}\rangle_{L^{2}}
	=e^{2k^{2}\pi^{2}\rho}\langle b^{\prime}\left(X\left( t\right)\right)e_{m},e_{k}\rangle_{L^{2}}.
\end{align*}
	Let $m$ take values in $\{l_{1},\cdots,l_{n}\}$, as selected in Lemma \ref{lem:opera}. We define the following vectors 
		\begin{equation}\label{eq:vecdefq}
			\begin{split}
			\vec{\dot{h}}(s):=&(\dot{h}_{l_{1}}(s),\cdots,\dot{h}_{l_{n}}(s))^{T}, \\
			\vec{H}^{F}(s):=&(H^{F}(x_{l_{1}},s),\cdots,H^{F}(x_{l_{n}},s))^{T}:=(H_{1}^{F}(s),\cdots,H_{n}^{F}(s))^{T}, \\
			\vec{\hat{H}}^{F}(s):=&(e^{l_{1}^{2}\pi^{2}\rho}H_{1}^{F}(s),\cdots,e^{l_{n}^{2}\pi^{2}\rho}H_{n}^{F}(s))^{T}:=\mathbf{Q}\vec{H}^{F}(s), \\
	    	\vec{D}^{F}(s):=&\sqrt{2}(\sum_{i=1}^{n}\sin(l_{1}\pi x_{i})\nabla_{i} f,\cdots,\sum_{i=1}^{n}\sin(l_{n}\pi x_{i})\nabla_{i} f)^{T}:=\vec{\nabla}f,
	    	\end{split}
		\end{equation}
		and $A(s)$ is the matrix defined in Equation~\eqref{eq:matrix}~with elements 
		\begin{align*}
			A_{ij}(s)=\langle  b^{\prime}\left(X\left( s\right)\right)e_{l_{j}},e_{l_{i}}\rangle_{L^{2}}-\frac{(l_{i}\pi)^{2}}{4M^{2}}K\tilde{\delta}_{ij}+2K\tilde{\delta}_{ij},1\leq i,j\leq n.
		\end{align*}
		On the other hand, by definition
		\begin{align*}
			E[D_{\tilde{h}_{l_{j}}}F]&
			=\sum_{i=1}^{n}E[\int_{0}^{T}\dot{h}_{l_{j}}(s)\sin(l_{j}\pi x_{i})\nabla_{i} f ds],1\leq j\leq n.
		\end{align*}
		Then, substituting Equation~\eqref{eq:3}~into Equation~\eqref{eq:martin}, we obtain the following equation in the vector form
		\begin{align*}
			E[\int_{0}^{T}\langle \vec{\dot{h}}(s),\vec{D}^{F}(s)\rangle_{\mathbb{R}^{n}} ds]
			=&E\left[\int_{0}^{T}\langle \vec{\dot{h}}(s)- A(s)\int_{0}^{s}\vec{\dot{h}}(\tau) d\tau,\vec{\hat{H}}^{F}(s)\rangle_{\mathbb{R}^{n}}ds\right] \\
			=&E\left[\int_{0}^{T}\langle(\mathcal{A}\vec{\dot{h}})(s),\vec{\hat{H}}^{F}(s)\rangle_{\mathbb{R}^{n}} ds\right]=E\left[\int_{0}^{T}\langle\vec{\dot{h}}(s),(\mathcal{A}^{\star}\vec{\hat{H}}^{F})(s)\rangle_{\mathbb{R}^{n}} ds\right].
		\end{align*}
		where $\mathcal{A}^{\star}$ is the dual operator of  $\mathcal{A}$ in $L_{a,n}^{2}$.
		Because $\vec{\dot{h}}(s)\in L_{a,n}^{2}$ is arbitrary, we have	
		\begin{align*}
			E\left[(\mathcal{A}^{\star}\vec{\hat{H}}^{F})(s)|\mathcal{F}_{s}\right]=(\mathcal{A}^{\star}\vec{\hat{H}}^{F})(s)
			=& E[\vec{D}^{F}(s)|\mathcal{F}_{s}],
		\end{align*}
		and 
		\begin{align*}	
			(\mathcal{A}^{\star}k)(t)=k(t)-E\left[\int_{t}^{T}A^{*}(s)k(s)ds|\mathcal{F}_{t}\right].
		\end{align*}			
		Finally,
		\begin{align*}
			\vec{H}^{F}(s)
			=&\mathbf{Q}^{-1}\vec{\hat{H}}^{F}(s)=\mathbf{Q}^{-1}(\mathcal{A}^{\star})^{-1} E[\vec{D}^{F}(s)|\mathcal{F}_{s}].
		\end{align*}
	\end{proof}
	\subsection{The log-Sobolev inequality}\label{sec:4.2}
	In this section, we shall prove the log-Sobolev inequality and the Poincar\'e inequality up to the terminal time $T$ using the Clark-Ocone-Haussmann formula derived in Section \ref{sec:4.1}. The constant in the log-Sobolev inequality will be used to determine the renormalization relation below.
	\begin{theorem}\label{thm:logso}
		For the cylindrical function $F\in\text{Cyl}(\mathcal{L})$  defined in~\eqref{eq:cyl}, we have the following log-Sobolev inequality
		\begin{align}\label{eq:Logsob}
			E[F^{2}\log F^{2}]-E[F^{2}]\log E[F^{2}]
			\leq&2\frac{e^{(1-\gamma-\varepsilon+4K)T}-1}{1-\gamma-\varepsilon+4K}\frac{n^{2}}{\hat{c}(\rho)}E\left[\sum_{i=1}^{n}\sum_{j=1}^{n}\mathcal{K}(x_{i},x_{j})\nabla_{i}f\nabla_{j}f\right],
		\end{align}
where $\hat{c}(\rho)$ is defined in Lemma \ref{lem:lowerbound}.
	\end{theorem}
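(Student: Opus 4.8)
The plan is to derive the log-Sobolev inequality from the Clark-Ocone-Haussmann representation of Theorem~\ref{thm:finiteresp} via the classical Wiener-space entropy method (as in Capitaine–Hsu–Ledoux). First I would recall that for a positive random variable of the form $F^{2}$ with martingale representation $F^{2}=E[F^{2}]+\sum_{i}\int_{0}^{T}G_{i}(s)\,dB^{l_{i}}(s)$, the entropy admits the representation
\begin{align*}
	E[F^{2}\log F^{2}]-E[F^{2}]\log E[F^{2}]=\frac{1}{2}E\left[\int_{0}^{T}\frac{1}{M_{s}}\sum_{i=1}^{n}|G_{i}(s)|^{2}\,ds\right],
\end{align*}
where $M_{s}:=E[F^{2}\mid\mathcal{F}_{s}]$, obtained by applying It\^o's formula to $M_{s}\log M_{s}$. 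Then I would write $F^{2}$'s representation in terms of that of $F$ itself: since $F=E[F]+\sum_{i}\int_{0}^{T}H^{F}(x_{l_{i}},s)\,dB^{l_{i}}(s)$ from Theorem~\ref{thm:finiteresp}, It\^o's formula gives $G_{i}(s)=2M_{s}^{1/2}\,\widetilde H_{i}(s)$ for the COH integrand $\widetilde H_{i}$ of the martingale $s\mapsto E[F\mid\mathcal{F}_{s}]$ scaled appropriately, so that $\frac{1}{M_{s}}\sum_{i}|G_{i}(s)|^{2}\leq 4\sum_{i}|H^{F}(x_{l_{i}},s)|^{2}$ after a conditional Cauchy–Schwarz/Jensen step (the standard ``$\le$'' replacing the exact identity, which is where the constant $2$ in front appears). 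This reduces the problem to bounding $E\!\left[\int_{0}^{T}\|\vec H^{F}(s)\|_{\mathbb{R}^{n}}^{2}\,ds\right]$.

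Next I would plug in the explicit formula $\vec H^{F}(s)=\mathbf{Q}^{-1}(\mathcal{A}^{\star})^{-1}E[\vec D^{F}(s)\mid\mathcal{F}_{s}]$ from Theorem~\ref{thm:finiteresp}. The operator $\mathbf{Q}^{-1}$ is diagonal with entries $e^{-l_{i}^{2}\pi^{2}\rho}\le 1$, so it only helps. The crux is to control $(\mathcal{A}^{\star})^{-1}$, and this is exactly what Lemma~\ref{lem:opera} was built for: using the stopping time $\tau_{N}$ and the matrices $J(s)=\hat M(T\wedge\tau_{N})\hat M^{-1}(s)$, the estimate \eqref{eq:opestimat} gives $\|J^{*}(s)y\|_{\mathbb{R}^{n}}^{2}\le e^{(1-\gamma-\varepsilon+4K)(T\wedge\tau_{N}-s)}\|y\|_{\mathbb{R}^{n}}^{2}$. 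One rewrites $(\mathcal{A}^{\star})^{-1}$ acting on the (conditionally-expected) terminal vector $\vec D^{F}$ — which is a pure functional of time $T$, hence ``$\int_{s}^{T}$'' structure collapses — in terms of $J^{*}$, so that $\|(\mathcal{A}^{\star})^{-1}E[\vec D^{F}\mid\mathcal{F}_{s}]\|_{\mathbb{R}^{n}}^{2}\le e^{(1-\gamma-\varepsilon+4K)(T-s)}E[\|\vec D^{F}\|_{\mathbb{R}^{n}}^{2}\mid\mathcal{F}_{s}]$ after letting $N\to\infty$. Integrating $e^{(1-\gamma-\varepsilon+4K)(T-s)}$ over $s\in[0,T]$ produces the factor $\dfrac{e^{(1-\gamma-\varepsilon+4K)T}-1}{1-\gamma-\varepsilon+4K}$.

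It then remains to relate $\|\vec D^{F}(s)\|_{\mathbb{R}^{n}}^{2}$ to the quadratic form $\sum_{i,j}\mathcal{K}(x_{i},x_{j})\nabla_{i}f\nabla_{j}f$ appearing on the right of \eqref{eq:Logsob}. By \eqref{eq:vecdefq}, $\vec D^{F}=\sqrt{2}\big(\sum_{i}\sin(l_{k}\pi x_{i})\nabla_{i}f\big)_{k}$, so $\|\vec D^{F}\|^{2}=\sum_{i,j}\nabla_{i}f\,\nabla_{j}f\cdot 2\sum_{k=1}^{n}\sin(l_{k}\pi x_{i})\sin(l_{k}\pi x_{j})$; the inner sum over $k$ is not $\mathcal{K}(x_i,x_j)$ but a finite truncation, which is where the eigenvalue lower bound $\hat c(\rho)$ of Lemma~\ref{lem:lowerbound} enters — one uses $\hat{\mathcal{K}}\ge \hat c(\rho)\mathrm{I}_{d}$ on the Gram matrix $(\mathcal{K}(x_i,x_j))$ to dominate the truncated trigonometric Gram matrix by $\frac{n}{\hat c(\rho)}$ times $(\mathcal{K}(x_i,x_j))$, contributing the factor $\frac{n^{2}}{\hat c(\rho)}$ (one power of $n$ from the Cauchy–Schwarz over the $n$ summands, one from the eigenvalue comparison). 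Collecting the bound $4\cdot\frac12=2$ from the entropy step, the time integral, and this matrix comparison yields \eqref{eq:Logsob}.

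The main obstacle I expect is the last step: making rigorous the passage from the truncated trigonometric Gram matrix $2\sum_{k=1}^{n}\sin(l_k\pi x_i)\sin(l_k\pi x_j)$ (which depends on the ad hoc choice of the widely-spaced frequencies $l_1<\cdots<l_n$ from Lemma~\ref{lem:opera}) to the fixed kernel $\mathcal{K}(x_i,x_j)$, and tracking that the constant really only involves $\hat c(\rho)$ and $n$ and not the $l_k$'s. The secondary technical point is the stopping-time localization: one must check that $E[\|\vec D^{F}\|^{2}]<\infty$ (it is bounded, since $f$ is smooth and the $x_i$ are fixed, so $\nabla_i f$ is a bounded functional only if we assume $f$ has bounded gradient — otherwise a further truncation/approximation of $f$ is needed) so that dominated convergence justifies removing $\tau_N$ as $N\to\infty$.
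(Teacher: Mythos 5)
Your skeleton (entropy via It\^o's formula for $E[F^{2}\mid\mathcal{F}_{s}]\log E[F^{2}\mid\mathcal{F}_{s}]$, the Jacobian estimate \eqref{eq:opestimat} of Lemma \ref{lem:opera} with the stopping time $\tau_{N}$, the time integral producing $\frac{e^{(1-\gamma-\varepsilon+4K)T}-1}{1-\gamma-\varepsilon+4K}$, and the $\hat{c}(\rho)$--comparison of the sparse sine Gram matrix with $\hat{\mathcal{K}}$) is the paper's, but the reduction in your second paragraph has a genuine gap. You claim $\frac{1}{M_{s}}\sum_{i}|G_{i}(s)|^{2}\leq 4\sum_{i}|H^{F}(x_{l_{i}},s)|^{2}$, i.e.\ that the entropy is controlled by $2E[\int_{0}^{T}\Vert\vec{H}^{F}(s)\Vert^{2}_{\mathbb{R}^{n}}ds]$, which by It\^o's isometry is just $2\,\textbf{Var}(F)$ --- not a bound an LSI argument can pass through. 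Conditional Cauchy--Schwarz only gives $\frac{1}{M_{s}}\Vert \vec{G}(s)\Vert^{2}_{\mathbb{R}^{n}}\leq 4\,E[\Vert\cdot\Vert^{2}\mid\mathcal{F}_{s}]$ with the \emph{square inside} the conditional expectation; Jensen runs in the opposite direction, so you cannot replace $E[|\cdot|^{2}\mid\mathcal{F}_{s}]$ by $|E[\cdot\mid\mathcal{F}_{s}]|^{2}$. The claimed pointwise bound is already false for scalar Brownian motion: with $F=f(W_{T})$, $f(y)=(y-a)^{2}$, at $s=0$ it reads $\frac{|E[4(W_{T}-a)^{3}]|^{2}}{E[(W_{T}-a)^{4}]}\leq 4\,|E[2(W_{T}-a)]|^{2}$, which reduces to $9T^{2}\leq 3T^{2}$. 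Moreover, in this model the COH integrand of $F^{2}$ is $2\mathbf{Q}^{-1}(\mathcal{A}^{\star})^{-1}E[F\vec{D}^{F}(s)\mid\mathcal{F}_{s}]$, and $(\mathcal{A}^{\star})^{-1}$ (which itself involves conditional expectations and the matrices $\hat{M}$) does not commute with the random weight $F$, so there is no route from $\vec{H}^{F^{2}}$ to $\vec{H}^{F}$ at all.

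The paper avoids this by never separating $F$ from the gradient: since $\vec{D}^{F}(\tau)\equiv\vec{\nabla}f$ is constant in $\tau$ and $\frac{d}{d\tau}\hat{M}^{*}(\tau)=\hat{M}^{*}(\tau)A(\tau)$, the tower property collapses
\begin{align*}
(\mathcal{A}^{\star})^{-1}E[F\vec{D}^{F}(s)\mid\mathcal{F}_{s}]=E\left[F(\hat{M}^{*})^{-1}(s)\hat{M}^{*}(T)\vec{\nabla}f\mid\mathcal{F}_{s}\right],
\end{align*}
and only then is conditional Cauchy--Schwarz applied, giving $\frac{\Vert\vec{H}^{F^{2}}(s)\Vert^{2}_{\mathbb{R}^{n}}}{E[F^{2}\mid\mathcal{F}_{s}]}\leq 4\,E[\Vert\mathbf{Q}^{-1}(\hat{M}^{*})^{-1}(s)\hat{M}^{*}(T)\vec{\nabla}f\Vert^{2}_{\mathbb{R}^{n}}\mid\mathcal{F}_{s}]$ --- a conditional expectation of a square, which after \eqref{eq:opestimat}, the time integration and the $\hat{c}(\rho)$ step produces the unconditional quadratic form $E[\sum_{i,j}\mathcal{K}(x_{i},x_{j})\nabla_{i}f\nabla_{j}f]$ on the right of \eqref{eq:Logsob}. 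If you substitute this collapsed-conditional-expectation step (keeping $F$ inside) for your reduction to $\vec{H}^{F}$, the remaining steps of your outline, including the removal of $\tau_{N}$, coincide with the paper's argument.
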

	\begin{proof}
		Let $\phi=F^{2}$ and $\phi_{t}=E[\phi|\mathcal{F}_{t}]$. By Theorem \ref{thm:finiteresp}, we have
		\begin{align*}
			\phi_{t}=E[\phi]+\sum_{i=1}^{n}\int_{0}^{t}  H^{\phi}(x_{l_{i}},s)dB^{l_{i}}(s).
		\end{align*} 
		By It\^{o} formula,
		\begin{equation}\label{eq:ito}
			\begin{split}
				d\phi_{t}\log(\phi_{t})=&(1+\log(\phi_{t}))d\phi_{t}+\frac{1}{2}\frac{d\left \langle \phi \right \rangle_{t}}{\phi_{t}}\\
				=&(1+\log(\phi_{t}))\left(\sum_{k=1}^{n} H^{\phi}(x_{l_{k}},s)dB^{l_{k}}(t)\right)+\frac{1}{2\phi_{t}}\sum_{k=1}^{n}|H^{\phi}(x_{l_{k}},s)|^{2}dt.
			\end{split}
		\end{equation}
		Note that $\nabla\phi=\nabla F^{2}=2F\nabla F$. By Theorem \ref{thm:finiteresp}, we know that
		\begin{align*}
			\vec{H}^{\phi}(s)
			=&2\mathbf{Q}^{-1}(\mathcal{A}^{\star})^{-1} E[F\vec{D}^{F}(s)|\mathcal{F}_{s}].
		\end{align*}
		Then, by Equation~\eqref{eq:inveroperator}~we have
		\begin{align*}
			(\mathcal{A}^{\star})^{-1}E[F\vec{D}^{F}(s)|\mathcal{F}_{s}] =&E[F\vec{D}^{F}(s)|\mathcal{F}_{s}]+E\left[(\hat{M}^{*})^{-1}(s)\int_{s}^{T}\hat{M}^{*}(\tau)A(\tau)E[F\vec{D}^{F}(\tau)|\mathcal{F}_{\tau}]d\tau|\mathcal{F}_{s}\right]\\
			=&E[F\vec{D}^{F}(s)|\mathcal{F}_{s}]+E\left[F(\hat{M}^{*})^{-1}(s)\int_{s}^{T}\hat{M}^{*}(\tau)A(\tau)\vec{D}^{F}(\tau)d\tau|\mathcal{F}_{s}\right]\\
			=&E[F\vec{\nabla}f|\mathcal{F}_{s}]+E\left[F(\hat{M}^{*})^{-1}(s)\int_{s}^{T}\frac{d}{d\tau}\hat{M}^{*}(\tau)d\tau\vec{\nabla}f|\mathcal{F}_{s}\right]\\
			=&E\left[F(\hat{M}^{*})^{-1}(s)\hat{M}^{*}(T)\vec{\nabla}f|\mathcal{F}_{s}\right],
		\end{align*}
		and
		\begin{align*}
			\Vert\mathbf{Q}^{-1}(\mathcal{A}^{\star})^{-1}E[F\vec{D}^{F}(s)|\mathcal{F}_{s}]\Vert^{2}_{\mathbb{R}^{n}} 
			\leq&E[F^{2}|\mathcal{F}_{s}]E\left[\Vert\mathbf{Q}^{-1}(\hat{M}^{*})^{-1}(s)\hat{M}^{*}(T)\vec{\nabla}f\Vert^{2}_{\mathbb{R}^{n}}|\mathcal{F}_{s}\right].
		\end{align*}
		Therefore, we know that
		\begin{align*}
			\frac{\Vert \vec{H}^{\phi}(s)\Vert^{2}_{\mathbb{R}^{n}}}{E[F^{2}\mid \mathcal{F}_{s}]}=\frac{\Vert \vec{H}^{F^{2}}(s)\Vert^{2}_{\mathbb{R}^{n}}}{E[F^{2}\mid \mathcal{F}_{s}]}\leq& \frac{2\Vert\mathbf{Q}^{-1}(\mathcal{A}^{\star})^{-1} E[F\vec{D}^{F}(s)|\mathcal{F}_{s}]\Vert^{2}_{\mathbb{R}^{n}}}{E[F^{2}\mid \mathcal{F}_{s}]}
			\leq2E\left[\Vert\mathbf{Q}^{-1}(\hat{M}^{*})^{-1}(s)\hat{M}^{*}(T)\vec{\nabla}f\Vert^{2}_{\mathbb{R}^{n}}|\mathcal{F}_{s}\right].
		\end{align*}
		Recall the estimate~\eqref{eq:opestimat}~in Lemma \ref{lem:opera} and $\mathbf{Q}^{-1}\leq \text{I}_{d}$, we get
		\begin{align*}
			&E\left[\int_{0}^{T\wedge\tau_{N}}E\left[\Vert\mathbf{Q}^{-1}(\hat{M}^{*})^{-1}(s)\hat{M}^{*}(T)\vec{\nabla}f\Vert^{2}_{\mathbb{R}^{n}}|\mathcal{F}_{s}\right]ds\right]\\
			=&	E\left[\int_{0}^{T}\textbf{I}_{\{s\leq T\wedge\tau_{N}\}}E\left[\Vert\mathbf{Q}^{-1}(\hat{M}^{*})^{-1}(s)\hat{M}^{*}(T)\vec{\nabla}f\Vert^{2}_{\mathbb{R}^{n}}|\mathcal{F}_{s}\right]ds\right]\\		
			=&	E\left[\int_{0}^{ T\wedge\tau_{N}}\Vert\mathbf{Q}^{-1}(\hat{M}^{*})^{-1}(s)\hat{M}^{*}(T)\vec{\nabla}f\Vert^{2}_{\mathbb{R}^{n}}ds\right]\\	
			\leq&E\left[\int_{0}^{T\wedge\tau_{N}}\Vert\mathbf{Q}^{-1}(\hat{M}^{*})^{-1}(s)\hat{M}^{*}(T\wedge\tau_{N})\vec{\nabla}f\Vert^{2}_{\mathbb{R}^{n}}ds\right]\\
			+&E\left[\int_{0}^{T\wedge\tau_{N}}\Vert\mathbf{Q}^{-1}(\hat{M}^{*})^{-1}(s)\left(\hat{M}^{*}(T)-\hat{M}^{*}(T\wedge\tau_{n})\right)\vec{\nabla}f\Vert^{2}_{\mathbb{R}^{n}}ds\right]\\
			\leq&E\left[\int_{0}^{T\wedge\tau_{N}}\Vert J^{*}(s)\vec{\nabla}f\Vert^{2}_{\mathbb{R}^{n}}ds\right]+E\left[\int_{0}^{T\wedge\tau_{n}}\Vert(\hat{M}^{*})^{-1}(s)\int_{T\wedge\tau_{N}}^{T}\frac{d}{dt}\hat{M}^{*}(t)dt\vec{\nabla}f\Vert^{2}_{\mathbb{R}^{n}}ds\right]\\
			\leq&E\left[\int_{0}^{T\wedge\tau_{N}}e^{(1-\gamma-\varepsilon+4K)(T\wedge\tau_{N}-s)}\Vert \vec{\nabla}f\Vert^{2}_{\mathbb{R}^{n}}ds\right]+E\left[\int_{0}^{T}\Vert(\hat{M}^{*})^{-1}(s)\int_{T\wedge\tau_{N}}^{T}\hat{M}^{*}(t)A^{*}(t)dt\vec{\nabla}f\Vert^{2}_{\mathbb{R}^{n}}ds\right]\\
			\leq&E\left[\frac{e^{(1-\gamma-\varepsilon+4K)(T\wedge\tau_{N})}-1}{1-\gamma-\varepsilon+4K}\Vert \vec{\nabla}f\Vert^{2}_{\mathbb{R}^{n}}\right]+E\left[\int_{0}^{T}\Vert(\hat{M}^{*})^{-1}(s)\int_{T\wedge\tau_{N}}^{T}\hat{M}^{*}(t)A^{*}(t)dt\vec{\nabla}f\Vert^{2}_{\mathbb{R}^{n}}ds\right].
		\end{align*}
		Integrating both sides of~\eqref{eq:ito}~ from 0 to $T\wedge\tau_{N}$ and taking expectations, we have
		\begin{align*}
			E[\phi_{T\wedge\tau_{N}}\log\phi_{T\wedge\tau_{N}}]-\phi_{0}\log\phi_{0}&=\frac{1}{2}E[\int_{0}^{T\wedge\tau_{N}}\frac{\Vert \vec{H}^{F^{2}}(s)\Vert^{2}_{\mathbb{R}^{n}}}{E[F^{2}\mid \mathcal{F}_{s}]}dt]\\
			&\leq E\left[\frac{e^{(1-\gamma-\varepsilon+4K)(T\wedge\tau_{N})}-1}{1-\gamma-\varepsilon+4K}\Vert \vec{\nabla}f\Vert^{2}_{\mathbb{R}^{n}}\right]\\
			&+E\left[\int_{0}^{T}\Vert(\hat{M}^{*})^{-1}(s)\int_{T\wedge\tau_{N}}^{T}\hat{M}^{*}(t)A^{*}(t)dt\vec{\nabla}f\Vert^{2}_{\mathbb{R}^{n}}ds\right].
		\end{align*}
		Thanks to
		\begin{align*}
			\Vert \vec{\nabla}f\Vert^{2}_{\mathbb{R}^{n}}=&2 \sum_{k=1}^{n}\left(\sum_{i=1}^{n}\sin(l_{k}\pi x_{i})\nabla_{i} f\right)^{2}
			\leq\frac{2n^{2}}{\hat{c}(\rho)}\sum_{i=1}^{n}\sum_{j=1}^{n}\mathcal{K}(x_{i},x_{j})\nabla_{i} f\nabla_{j} f,
		\end{align*}
		where $\hat{c}(\rho)$ satisfies~\eqref{eq:lowerbound}.
	Let $N\rightarrow+\infty$, then $\tau_{N}\rightarrow T$ and
		\begin{align*}
			E[\phi_{T}\log\phi_{T}]-\phi_{0}\log\phi_{0}=&E[F^{2}\log F^{2}]-E[F^{2}]\log E[F^{2}]\\
			\leq&2\frac{e^{(1-\gamma-\varepsilon+4K)T}-1}{1-\gamma-\varepsilon+4K}\frac{n^{2}}{\hat{c}(\rho)}E\left[\sum_{i=1}^{n}\sum_{j=1}^{n}\mathcal{K}(x_{i},x_{j})\nabla_{i}f\nabla_{j}f\right].
		\end{align*}

	\end{proof}
	
	The log-Sobolev inequality in Theorem \ref{thm:logso} directly implies the following Poincar\'e inequality, which will be used to estimate the behavior of correlation functions in Section \ref{sec:5}.
	\begin{corollary}\label{cor:poinc}
	Consider the cylindrical function $F\in\text{Cyl}(\mathcal{L})$  defined in~\eqref{eq:cyl}, we get the following Poincar\'e inequality
	\begin{align}\label{eq:poin}
		\textbf{Var}(F(X))\leq& \frac{e^{(1-\gamma-\varepsilon+4K)T}-1}{1-\gamma-\varepsilon+4K}\frac{n^{2}}{\hat{c}(\rho)}E\left[\sum_{i=1}^{n}\sum_{j=1}^{n}\mathcal{K}(x_{i},x_{j})\nabla_{i}f\nabla_{j}f\right].
	\end{align}
\end{corollary}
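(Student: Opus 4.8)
The plan is to obtain the Poincar\'e inequality from the log-Sobolev inequality of Theorem \ref{thm:logso} by the classical linearization (tensorization at a constant) argument. Given $F\in\text{Cyl}(\mathcal{L})$ of the form $F(X)=f(X(x_{1},T),\cdots,X(x_{n},T))$, for a small real parameter $r$ I would apply \eqref{eq:Logsob} to the cylindrical function $F_{r}:=1+rF$, which corresponds to the smooth symbol $1+rf$ and hence again lies in $\text{Cyl}(\mathcal{L})$ with the same evaluation points $x_{1},\dots,x_{n}$. Since $\nabla_{i}(1+rf)=r\nabla_{i}f$, Theorem \ref{thm:logso} gives
\begin{align*}
 E[F_{r}^{2}\log F_{r}^{2}]-E[F_{r}^{2}]\log E[F_{r}^{2}]\leq 2r^{2}\frac{e^{(1-\gamma-\varepsilon+4K)T}-1}{1-\gamma-\varepsilon+4K}\frac{n^{2}}{\hat{c}(\rho)}E\Big[\sum_{i=1}^{n}\sum_{j=1}^{n}\mathcal{K}(x_{i},x_{j})\nabla_{i}f\nabla_{j}f\Big].
\end{align*}

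The next step is to expand the left-hand side in powers of $r$. Writing $u:=F_{r}^{2}=1+2rF+r^{2}F^{2}$ and $\eta:=u-1=2rF+r^{2}F^{2}=O(r)$, and using $u\log u=\eta+\tfrac{1}{2}\eta^{2}+O(\eta^{3})$ together with the analogous expansion of $E[u]\log E[u]$, a direct computation yields
\begin{align*}
 E[F_{r}^{2}\log F_{r}^{2}]-E[F_{r}^{2}]\log E[F_{r}^{2}]=2r^{2}\big(E[F^{2}]-(E[F])^{2}\big)+O(r^{3})=2r^{2}\,\textbf{Var}(F(X))+O(r^{3}).
\end{align*}
Substituting this into the displayed inequality, dividing both sides by $2r^{2}$, and letting $r\to0$ produces exactly \eqref{eq:poin}; note that the $T$-dependence of the constant is inherited unchanged from Theorem \ref{thm:logso}, with the factor $2$ absorbed.

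The only point requiring care is the legitimacy of the Taylor expansion, namely that the $O(r^{3})$ remainder is genuinely negligible after division by $r^{2}$; this needs integrability of $F$, $F^{2}$ and $F^{2}\log^{+}F^{2}$. When $f$ (hence $F$) is bounded this is immediate. For general smooth $f$ I would first prove \eqref{eq:poin} for $f$ replaced by a smooth truncation $\chi_{M}\circ f$ with $|\chi_{M}'|\leq1$, for which the chain rule gives $\nabla_{i}(\chi_{M}\circ f)=\chi_{M}'(f)\nabla_{i}f$ so that $\sum_{i,j}\mathcal{K}(x_{i},x_{j})\nabla_{i}(\chi_{M}\circ f)\nabla_{j}(\chi_{M}\circ f)=\chi_{M}'(f)^{2}\sum_{i,j}\mathcal{K}(x_{i},x_{j})\nabla_{i}f\nabla_{j}f\leq\sum_{i,j}\mathcal{K}(x_{i},x_{j})\nabla_{i}f\nabla_{j}f$ by positive semidefiniteness of $\hat{\mathcal{K}}$, and then pass to the limit $M\to\infty$ using dominated convergence for $\textbf{Var}(\chi_{M}\circ f(X))$ and monotone convergence on the right-hand side. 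I expect this truncation bookkeeping to be the only (and mildly technical) obstacle; the algebraic heart of the argument is the one-line expansion of the entropy above.
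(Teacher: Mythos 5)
Your proof is correct and is essentially the paper's argument: the paper simply cites Proposition 5.1.3 of Bakry--Gentil--Ledoux, whose content is exactly the linearization $F\mapsto 1+rF$ of the log-Sobolev inequality of Theorem \ref{thm:logso} that you carry out explicitly (with the factor $2$ absorbed as you note). Your truncation remarks for unbounded $f$ are a reasonable extra precaution but add nothing beyond the standard implication invoked in the paper.
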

\begin{proof}
	According to Proposition 5.1.3 in \cite{Bakry2014document}, the log-Sobolev inequality in Theorem \ref{thm:logso} implies the following Poincar\'e inequality 
\begin{align*}
	\textbf{Var}(F(X))\leq& \frac{e^{(1-\gamma-\varepsilon+4K)T}-1}{1-\gamma-\varepsilon+4K}\frac{n^{2}}{\hat{c}(\rho)}E\left[\sum_{i=1}^{n}\sum_{j=1}^{n}\mathcal{K}(x_{i},x_{j})\nabla_{i}f\nabla_{j}f\right].
\end{align*}
\end{proof}
From the result in Theorem \ref{thm:logso}, the constant in the log-Sobolev inequality depends on the parameters $\varepsilon,\rho,K,\gamma$ and the terminal time $T$. The renormalization relation is determined based on the  principle that the constant in the log-Sobolev inequality remains finite as $T\rightarrow+\infty$.
	\begin{theorem}\label{thm:4.8}
		For the cylindrical  function $F\in\text{Cyl}(\mathcal{L})$  defined in~\eqref{eq:cyl}, based on the principle that the constant in~\eqref{eq:Logsob}~remains finite as $T\rightarrow+\infty$, the renormalization relation can be determined as follows
		\begin{align}\label{eq:ronrel}
			K(T)=\frac{1}{T^{\kappa}},\kappa>0,\varepsilon(T)=\frac{1}{T},\delta(T)=\frac{1}{T},\gamma=n^{2}\gamma^{*},\gamma^{*}>1, \rho(T)=\frac{1}{T},
		\end{align}
		then we obtain
		\begin{align*}
			E[F^{2}\log F^{2}]-E[F^{2}]\log E[F^{2}]
			&\leq C(T) \sum_{j=1}^nE\left[|\nabla_j f|^{2} \right],
		\end{align*}
				and
		\begin{align*}
				 \lim\limits_{T\rightarrow+\infty}C(T)=\frac{2}{\gamma^{*}-1}<+\infty,
		\end{align*}
		where
		\begin{align*}
			 C(T):=&2\frac{Tn^{2}\left(e^{T-n^{2}\gamma^{*}T-1+4T^{1-\kappa}}-1\right)}{n^{2}T-n^{2}\gamma^{*}T-1+4T^{1-\kappa}}\frac{T+\sqrt{8\pi} \max_{i}\{\hat{C}_{x_{i}}\}}{T-\sqrt{8\pi} \max_{i}\{\hat{C}_{x_{i}}\}},
		\end{align*}
		and $\hat{C}_{x_{i}}$ will be defined in~\eqref{eq:hatc}~below. 
	\end{theorem}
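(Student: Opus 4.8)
The plan is to take the log-Sobolev inequality \eqref{eq:Logsob} of Theorem~\ref{thm:logso} as the starting point and, under the scaling \eqref{eq:ronrel}, to reorganize its right-hand side into the asserted form. Write $a:=1-\gamma-\varepsilon+4K$. The first step is to replace the quadratic form $E[\sum_{i,j}\mathcal{K}(x_i,x_j)\nabla_i f\nabla_j f]$ by a multiple of $\sum_{j=1}^{n}E[|\nabla_j f|^{2}]$: since the matrix $\hat{\mathcal{K}}:=(\mathcal{K}(x_i,x_j))_{1\le i,j\le n}$ is symmetric and positive semi-definite, for every realization one has $\sum_{i,j}\mathcal{K}(x_i,x_j)\nabla_i f\nabla_j f\le \lambda_{\max}(\hat{\mathcal{K}})\sum_{j=1}^{n}|\nabla_j f|^{2}$, so it remains to bound $\lambda_{\max}(\hat{\mathcal{K}})$ from above and $\hat{c}(\rho)$ from below. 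By the Gershgorin circle theorem, $\lambda_{\max}(\hat{\mathcal{K}})\le \max_{i}\left(\mathcal{K}(x_i,x_i)+\sum_{j\ne i}|\mathcal{K}(x_i,x_j)|\right)$; combining the diagonal expansion~\eqref{eq:cova} and the off-diagonal estimate of Lemma~\ref{lem:equiva} gives an upper bound of the shape $\frac{1}{\sqrt{8\pi\rho}}+\sqrt{\rho}\,\hat{C}_{x_i}$, where $\hat{C}_{x_i}$ (which we define in~\eqref{eq:hatc}) collects the $\rho^{1/2}$ remainders of Lemma~\ref{lem:equiva} together with crude $\sqrt{\rho}$-bounds, valid for $\rho\in(0,1)$, for the super-exponentially small heat-kernel tails $\rho^{-1/2}e^{-c/\rho}$. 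Reorganizing the very same correction terms in Lemma~\ref{lem:lowerbound} yields symmetrically $\hat{c}(\rho)\ge \frac{1}{\sqrt{8\pi\rho}}-\sqrt{\rho}\,\hat{C}_{x_i}$, so after taking the maximum over $i$,
\begin{align*}
\frac{\lambda_{\max}(\hat{\mathcal{K}})}{\hat{c}(\rho)}\le \frac{1/\sqrt{8\pi\rho}+\sqrt{\rho}\,\max_{i}\hat{C}_{x_i}}{1/\sqrt{8\pi\rho}-\sqrt{\rho}\,\max_{i}\hat{C}_{x_i}}.
\end{align*}

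Next I would substitute the renormalization relation~\eqref{eq:ronrel}. With $\rho=1/T$ one has $1/\sqrt{8\pi\rho}=\sqrt{T/(8\pi)}$, so multiplying the numerator and denominator of the last fraction by $\sqrt{8\pi T}$ turns it into $\frac{T+\sqrt{8\pi}\max_i\hat{C}_{x_i}}{T-\sqrt{8\pi}\max_i\hat{C}_{x_i}}$. With $\gamma=n^{2}\gamma^{*}$, $\varepsilon=1/T$ and $K=T^{-\kappa}$ one gets $aT=T-n^{2}\gamma^{*}T-1+4T^{1-\kappa}$, hence $\frac{e^{aT}-1}{a}=\frac{T(e^{aT}-1)}{aT}$ with $e^{aT}=e^{T-n^{2}\gamma^{*}T-1+4T^{1-\kappa}}$. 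For $T$ large enough, $\gamma^{*}>1$ forces $aT<0$ (the term $(1-n^{2}\gamma^{*})T$ dominates), so $e^{aT}-1<0$; moreover $n^{2}\ge 1$ gives $n^{2}T-n^{2}\gamma^{*}T-1+4T^{1-\kappa}\ge aT$, and for $T$ large this quantity is still negative, so dividing the negative number $Tn^{2}(e^{aT}-1)$ by it can only increase the value:
\begin{align*}
\frac{Tn^{2}(e^{aT}-1)}{aT}\le \frac{Tn^{2}\left(e^{T-n^{2}\gamma^{*}T-1+4T^{1-\kappa}}-1\right)}{n^{2}T-n^{2}\gamma^{*}T-1+4T^{1-\kappa}}.
\end{align*}
Putting the three estimates together, the constant $2\frac{e^{aT}-1}{a}\frac{n^{2}}{\hat{c}(\rho)}\lambda_{\max}(\hat{\mathcal{K}})=2\cdot\frac{Tn^{2}(e^{aT}-1)}{aT}\cdot\frac{\lambda_{\max}(\hat{\mathcal{K}})}{\hat{c}(\rho)}$ in front of $\sum_{j=1}^{n}E[|\nabla_j f|^{2}]$ is at most the claimed $C(T)$, which establishes the stated inequality.

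For the limit, I would let $T\to+\infty$ in $C(T)$. Since $1-n^{2}\gamma^{*}<0$, the exponent $T-n^{2}\gamma^{*}T-1+4T^{1-\kappa}\to-\infty$, so $e^{T-n^{2}\gamma^{*}T-1+4T^{1-\kappa}}\to 0$ and $Tn^{2}\left(e^{T-n^{2}\gamma^{*}T-1+4T^{1-\kappa}}-1\right)\sim -Tn^{2}$; the denominator $n^{2}T-n^{2}\gamma^{*}T-1+4T^{1-\kappa}\sim n^{2}T(1-\gamma^{*})$; and $\frac{T+\sqrt{8\pi}\max_i\hat{C}_{x_i}}{T-\sqrt{8\pi}\max_i\hat{C}_{x_i}}\to 1$. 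Hence $C(T)\to 2\cdot\frac{-Tn^{2}}{n^{2}T(1-\gamma^{*})}\cdot 1=\frac{2}{\gamma^{*}-1}<+\infty$.

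The step I expect to be the crux is the two-sided spectral estimate for $\hat{\mathcal{K}}$: one must package the sub-leading Gaussian tails that appear in both Lemma~\ref{lem:equiva} and Lemma~\ref{lem:lowerbound} into a single quantity $\hat{C}_{x_i}$ so that the upper bound for $\lambda_{\max}(\hat{\mathcal{K}})$ and the lower bound for $\hat{c}(\rho)$ share the leading term $1/\sqrt{8\pi\rho}$ with only opposite-sign $O(\sqrt{\rho})$ corrections; this is precisely what forces $\lambda_{\max}(\hat{\mathcal{K}})/\hat{c}(\rho)\to 1$ and keeps $C(T)$ bounded in the limit. The remaining work --- absorbing the factor $n^{2}$ into the denominator via $n^{2}\ge 1$ (legitimate because the scaling $\gamma=n^{2}\gamma^{*}$ with $\gamma^{*}>1$ keeps $aT$ negative and bounded away from $0$, so that $\frac{e^{aT}-1}{a}$ converges rather than blows up) and the elementary asymptotics of $C(T)$ --- is routine once \eqref{eq:ronrel} is inserted. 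As the sign arguments above indicate, the displayed inequality should be understood for $T$ sufficiently large, which is all that the $T\to+\infty$ conclusion requires.
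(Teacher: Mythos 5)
Your proposal is correct and follows essentially the same route as the paper: the two-sided Gershgorin bounds on $\hat{\mathcal{K}}$ and $\hat{c}(\rho)$ packaged into $\hat{C}_{x_{i}}$, substitution of the renormalization relation~\eqref{eq:ronrel}, and the elementary limit computation giving $\frac{2}{\gamma^{*}-1}$. The only differences are cosmetic: you phrase the quadratic-form step via $\lambda_{\max}(\hat{\mathcal{K}})$ rather than the matrix inequality~\eqref{eq:uplower}, and you make explicit the sign argument behind replacing $T$ by $n^{2}T$ in the denominator and the proviso that the bound is meant for $T$ sufficiently large, both of which the paper leaves implicit.
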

	\begin{proof}
		Recalling that $\hat{\mathcal{K}}$ is an $n\times n$ matrix with elements $\mathcal{K}(x_{i},x_{j}),1\leq i,j\leq n$, and using the estimates in~\eqref{eq:color}, we know that
		\begin{align*}
		\vert\mathcal{K}(x_{i},x_{j}) \vert\leq
		\frac{1}{\sqrt{8\pi\rho}}e^{-\frac{(x_{j}-x_{i})^{2}}{8\rho}}+\frac{\sqrt{2}\pi^{\frac{3}{2}}\rho^{\frac{1}{2}}}{6},x_{i}\neq x_{j}\in(0,1).
		\end{align*}
		For $i=j$, by Equation~\eqref{eq:cova}~
		\begin{align*}
			\vert\sqrt{8\pi\rho}\mathcal{K}(x_{i},x_{i})-1-e^{-\frac{x_{i}^{2}}{2\rho}}-e^{-\frac{(x_{i}-1)^{2}}{2\rho}}\vert=\sum_{k=1}^{+\infty}\left[e^{-\frac{k^{2}}{2\rho}}-e^{-\frac{(k+x_{i})^{2}}{2\rho}}\right]+\sum_{k=-1}^{+\infty}\left[e^{-\frac{k^{2}}{2\rho}}-e^{-\frac{(k-1+x_{i})^{2}}{2\rho}}\right].
		\end{align*}	
		Then
			\begin{align*}
			\vert\sqrt{8\pi\rho}\mathcal{K}(x_{i},x_{i})-1\vert=&e^{-\frac{x_{i}^{2}}{2\rho}}+e^{-\frac{(x_{i}-1)^{2}}{2\rho}}+2\sum_{k=1}^{+\infty}e^{-\frac{k^{2}}{2\rho}}\\
			\leq& e^{-\frac{x_{i}^{2}}{2\rho}}+e^{-\frac{(x_{i}-1)^{2}}{2\rho}}+2\sum_{k=1}^{+\infty}\frac{2\rho}{k^{2}}
			\leq2\rho(\frac{1}{x_{i}^{2}}+\frac{1}{(x_{i}-1)^{2}})+\frac{2}{3}\pi^{2}\rho.
		\end{align*}	
		Let $C_{x_{i}}:=2(\frac{1}{x_{i}^{2}}+\frac{1}{(x_{i}-1)^{2}})+\frac{2}{3}\pi^{2}$,
		we get
		\begin{align}\label{eq:cx}
			\frac{1}{\sqrt{8\pi\rho}}-\frac{C_{x_{i}}}{\sqrt{8\pi}}\sqrt{\rho}\leq\mathcal{K}(x_{i},x_{i})\leq&\frac{1}{\sqrt{8\pi\rho}}+\frac{C_{x_{i}}}{\sqrt{8\pi}}\sqrt{\rho}.
		\end{align}	
		Let
			\begin{align*}
		\mathcal{S}^{+}_{i}:=\mathcal{K}(x_{i},x_{i})+\sum_{j\neq i}\vert\mathcal{K}(x_{i},x_{j}) \vert,\mathcal{S}^{-}_{i}:=\mathcal{K}(x_{i},x_{i})-\sum_{j\neq i}\vert\mathcal{K}(x_{i},x_{j}) \vert.
		\end{align*}	
		Then for $i=1,\cdots,n,x_{i}\in(0,1)$
			\begin{align*}
			\mathcal{S}^{+}_{i}\leq&\frac{1}{\sqrt{8\pi\rho}}+\frac{C_{x_{i}}}{\sqrt{8\pi}}\sqrt{\rho}+\frac{1}{\sqrt{8\pi\rho}}\sum_{j\neq i}e^{-\frac{(x_{j}-x_{i})^{2}}{8\rho}}+\frac{\sqrt{2}\pi^{\frac{3}{2}}}{6}(n-1)\sqrt{\rho}\\
			\leq&\frac{1}{\sqrt{8\pi\rho}}+\frac{C_{x_{i}}}{\sqrt{8\pi}}\sqrt{\rho}+\sum_{j\neq i}\frac{\sqrt{\frac{8}{\pi}}}{(x_{j}-x_{i})^{2}}\sqrt{\rho}+\frac{\sqrt{2}\pi^{\frac{3}{2}}}{6}(n-1)\sqrt{\rho}=\frac{1}{\sqrt{8\pi\rho}}+\hat{C}_{x_{i}}\sqrt{\rho},\\
			\mathcal{S}^{-}_{i}\geq&\frac{1}{\sqrt{8\pi\rho}}-\frac{C_{x_{i}}}{\sqrt{8\pi}}\sqrt{\rho}-\frac{1}{\sqrt{8\pi\rho}}\sum_{j\neq i}e^{-\frac{(x_{j}-x_{i})^{2}}{8\rho}}-\frac{\sqrt{2}\pi^{\frac{3}{2}}}{6}(n-1)\sqrt{\rho}\\
			\geq&\frac{1}{\sqrt{8\pi\rho}}-\frac{C_{x_{i}}}{\sqrt{8\pi}}\sqrt{\rho}-\sum_{j\neq i}\frac{\sqrt{\frac{8}{\pi}}}{(x_{j}-x_{i})^{2}}\sqrt{\rho}-\frac{\sqrt{2}\pi^{\frac{3}{2}}}{6}(n-1)\sqrt{\rho}=\frac{1}{\sqrt{8\pi\rho}}-\hat{C}_{x_{i}}\sqrt{\rho},
		\end{align*}	
		where
		\begin{align}\label{eq:hatc}
			\hat{C}_{x_{i}}:=\frac{C_{x_{i}}}{\sqrt{8\pi}}+\sum_{j\neq i}\frac{\sqrt{\frac{8}{\pi}}}{(x_{j}-x_{i})^{2}}+\frac{\sqrt{2}\pi^{\frac{3}{2}}}{6}(n-1).
		\end{align}	
		By~\eqref{eq:lowerbound}~, we have
			\begin{align}\label{eq:uplower}
		\frac{1}{\sqrt{8\pi\rho}}\text{I}_{d}- \max_{i}\{\hat{C}_{x_{i}}\}\sqrt{\rho}\text{I}_{d}	\leq\hat{c}(\rho)\text{I}_{d}\leq\hat{\mathcal{K}}\leq\frac{1}{\sqrt{8\pi\rho}}\text{I}_{d}+ \max_{i}\{\hat{C}_{x_{i}}\}\sqrt{\rho}\text{I}_{d}.
		\end{align}
		Then 
			\begin{align*}
			\text{I}_{d}\leq\frac{\hat{\mathcal{K}}}{\hat{c}(\rho)}\leq\frac{1+\sqrt{8\pi} \max_{i}\{\hat{C}_{x_{i}}\}\rho}{1-\sqrt{8\pi} \max_{i}\{\hat{C}_{x_{i}}\}\rho}\text{I}_{d},
		\end{align*}
in the order of the non-negative definiteness. According to the log-Sobolev inequality~\eqref{eq:Logsob}~obtained in Theorem \ref{thm:logso}, we choose the following renormalization relation between the parameters and time $T$
	\begin{align*}
			K(T)=\frac{1}{T^{\kappa}},\kappa>0,\varepsilon(T)=\frac{1}{T},\delta(T)=\frac{1}{T},\gamma=n^{2}\gamma^{*},\gamma^{*}>1, \rho(T)=\frac{1}{T}.
	\end{align*}
	Then
	\begin{align*}
	E[F^{2}\log F^{2}]-E[F^{2}]\log E[F^{2}]
	\leq&2\frac{Tn^{2}\left(e^{T-n^{2}\gamma^{*}T-1+4T^{1-\kappa}}-1\right)}{T-n^{2}\gamma^{*}T-1+4T^{1-\kappa}}\frac{T+\sqrt{8\pi} \max_{i}\{\hat{C}_{x_{i}}\}}{T-\sqrt{8\pi} \max_{i}\{\hat{C}_{x_{i}}\}}\sum_{j=1}^nE\left[|\nabla_j f|^{2} \right]\\
	\leq&2\frac{Tn^{2}\left(e^{T-n^{2}\gamma^{*}T-1+4T^{1-\kappa}}-1\right)}{n^{2}T-n^{2}\gamma^{*}T-1+4T^{1-\kappa}}\frac{T+\sqrt{8\pi} \max_{i}\{\hat{C}_{x_{i}}\}}{T-\sqrt{8\pi} \max_{i}\{\hat{C}_{x_{i}}\}}\sum_{j=1}^nE\left[|\nabla_j f|^{2}\right]\\
	=&C(T) \sum_{j=1}^nE\left[|\nabla_j f|^{2} \right],
\end{align*}
and 
\begin{align*}
	\lim\limits_{T\rightarrow+\infty}C(T)=\frac{2}{\gamma^{*}-1}.
\end{align*}
    \end{proof}	
\section{The partition function and correlation functions}\label{sec:5}
In this section, we first prove that the partition function remains invariant under the renormalization relation obtained in Section \ref{sec:4}, a feature consistent with the behavior observed in the renormalization procedure of the 1D Ising model. We then investigate the long time behavior of correlation functions via the Poincaré inequality, proving that the two point
correlation functions of SPDE on lattices converge as $T\rightarrow +\infty$. The limits coincide with the two point correlation functions of the 1D Ising model at the stable fixed point of the RG transformation. Note that the solution to SPDE~\eqref{eq:2.4}~actually depends on parameters $K,\gamma,\varepsilon,\delta,\rho$, for notation's simplicity, we also write $X(x,t):=X(x,t;K,\gamma,\varepsilon,\delta,\rho)$ when these parameters are fixed. For reader’s convenience, when the parameters satisfy the renormalization relation, we denote it as $\hat{X}(x,T):=X(x,T;K(T),\gamma(T),\varepsilon(T),\delta(T),\rho(T))$.
\subsection{The correctness of the renormalization procedure based on the partition function}\label{sec:6.1}
 Given a finite subset $[N]=\{1,\cdots,N\}\subset\mathbb{Z}$, where $ N$ is a multiple of 3. For the 1D Ising model with nearest-neighbor interactions under periodic boundary conditions, the Hamiltonian takes the following form
 \begin{align}\label{eq:ising}
 \mathcal{H}(\sigma)=-K\sum_{i=1}^{N}\sigma_{i}\sigma_{i+1}+\frac{\gamma}{2}\sum_{i=1}^{N}\sigma^{2}_{i},  \sigma=(\sigma_{1},\cdots,\sigma_{N})\in\Omega:=\{+1,-1\}^{N},
 \end{align}	
 where $\sigma_{1}=\sigma_{N+1}$ and $K,\gamma$ are the coupling parameters of the system. In the renormalization procedure, the partition functions of the system at different scales satisfy the following equation
		\begin{equation}\label{eq:parequ}
			\begin{split}
		\mathcal{Z}(N)=&\sum_{\sigma_{1}=\pm1}\cdots\sum_{\sigma_{N}=\pm1}e^{ K\sum_{i=1}^{N}\sigma_{i}\sigma_{i+1}-\frac{\gamma}{2}\sum_{i=1}^{N}\sigma_{i}^{2}}\\
		=&\sum_{S_{1}=\pm1}\cdots\sum_{S_{N/3}=\pm1}e^{G_{N}+ K_{1}\sum_{i=1}^{N/3}S_{i}S_{i+1}-\frac{\gamma_{1}}{2}\sum_{i=1}^{N/3}S_{i}^{2}},
		\end{split}
	\end{equation}
	where $K_{1},\gamma_{1}$ are the coupling parameters of the system at the blocked scale and $G_{N}$ is a constant such that Equation~\eqref{eq:parequ}~holds. The first line of Equation~\eqref{eq:parequ}~corresponds to the partition function at the original scale, and the second line corresponds to the partition function at the blocked scale after the renormalization procedure. As Cardy noted, the partition functions of the system at the original scale and the blocked scale are the same during the renormalization procedure, see the first paragraph on p.33 in \cite{Cardy1996book} for more details. Denote by $(\partial f)_{i}=f_{i}-f_{i+1}$ the discrete gradient of function $f:\mathbb{Z}\rightarrow \mathbb{R}$, the Hamiltonian~\eqref{eq:ising}~can be rewritten as
	\begin{equation}\label{eq:discreteham}
		\begin{split}
			\mathcal{H}(\sigma)
			=&\frac{1}{2}K\sum_{i=1}^{N}\left(\sigma_{i}-\sigma_{i+1}\right)^{2}+\frac{\gamma-2K}{2}\sum_{i=1}^{N}\sigma^{2}_{i}
			=\frac{1}{2}K\sum_{i=1}^{N}\left|(\partial \sigma)_{i}\right|^{2}+\frac{\gamma-2K}{2}\sum_{i=1}^{N}\sigma^{2}_{i}.
		\end{split}
	\end{equation} 
	The continuous version corresponding to Hamiltonian~\eqref{eq:discreteham}~has the following form
	\begin{align*}
		\mathcal{S}(X)=\frac{K}{2}\int_{0}^{1} \left|\partial_{x} X(x)\right|^{2}dx+\frac{\gamma-2K}{2}\int_{0}^{1}X^{2}(x)dx.
	\end{align*}
	Regarding the time $T$ as the renormalization steps, informally, the partition function can be defined as
\begin{equation}\label{eq:conparti}
	\begin{split}
	\mathcal{Z}(T):
	=&E\left[e^{G_{T}-\frac{\hat{K}(T)}{2}\Vert \partial_{x} X(T)\Vert _{L^{2}}^{2}-\frac{\hat{\gamma}(T)-2\hat{K}(T)}{2}\Vert X(T)\Vert _{L^{2}}^{2}}\right],
	\end{split}
\end{equation}
where $X(x,T)$ is the solution to SPDE~\eqref{eq:2.4}~when the parameters are fixed and $\hat{K}(T),\hat{\gamma}(T)$ are the coupling parameters of the system at time $T$. Now, we shall need to verify that the partition function $\mathcal{Z}(T)$ in Equation~\eqref{eq:conparti}~remains invariant as the solution of SPDE evolves over time, which proves the correctness of the renormalization procedure based on the partition function.
	\begin{theorem}\label{thm:6.1}
		Let $\hat{X}(x,T):=X(x,T;K(T),\gamma(T),\varepsilon(T),\delta(T),\rho(T))$ be the solution to SPDE~\eqref{eq:2.4}~under the renormalization relation~\eqref{eq:ronrel}~selected in Theorem \ref{thm:4.8}. If we take the coupling parameters $\hat{K}(T)=K(T)$ and $\hat{\gamma}(T)=\gamma(T)$, where $K(T)$ and $\gamma(T)$ satisfy the renormalization relation~\eqref{eq:ronrel}, then there exists a function $G_{T}$ such that the partition function 
		\begin{align*}
			\mathcal{Z}(T)=E[e^{G_{T}-\frac{K(T)}{2}\Vert \partial_{x} \hat{X}(T)\Vert _{L^{2}}^{2}-\frac{\gamma(T)-2K(T)}{2}\Vert \hat{X}(T)\Vert _{L^{2}}^{2}}],
		\end{align*}
		remains invariant with respect to $T$. 
	\end{theorem}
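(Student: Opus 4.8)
The plan is to reduce the assertion to the elementary fact that, for every $T$ in the range of interest, the quantity that is being exponentiated defines a strictly positive and finite real number, and then to absorb that number into an additive choice of $G_T$. Write
\begin{align*}
	\mathcal{S}_{T}(X):=\frac{K(T)}{2}\Vert \partial_{x} X\Vert_{L^{2}}^{2}+\frac{\gamma(T)-2K(T)}{2}\Vert X\Vert_{L^{2}}^{2},\qquad \Psi(T):=E\left[e^{-\mathcal{S}_{T}(\hat{X}(T))}\right],
\end{align*}
so that $\mathcal{Z}(T)=e^{G_{T}}\,\Psi(T)$. It then suffices to show $0<\Psi(T)<+\infty$; choosing $G_{T}:=c-\log\Psi(T)$ for an arbitrary constant $c$ yields $\mathcal{Z}(T)\equiv e^{c}$, which is precisely the claimed invariance with respect to $T$.

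For finiteness I would use the sign of the exponent under the renormalization relation~\eqref{eq:ronrel}. There one has $K(T)=T^{-\kappa}>0$ and $\gamma(T)-2K(T)=n^{2}\gamma^{*}-2T^{-\kappa}\geq 0$ as soon as $T\geq (2/(n^{2}\gamma^{*}))^{1/\kappa}$, so for such $T$ the functional $\mathcal{S}_{T}$ is nonnegative on $L^{2}(\mathbb{S}^{1})$, whence $0\leq e^{-\mathcal{S}_{T}(\hat{X}(T))}\leq 1$ and $\Psi(T)\leq 1$. For strict positivity I would invoke the regularity estimates of Section~\ref{sec:3}: by Theorem~\ref{thm:61} the map $x\mapsto\hat{X}(x,T)$ is almost surely smooth, and Theorems~\ref{thm:3.2} and~\ref{thm:diffestima} give $E[\Vert\hat{X}(T)\Vert_{L^{2}}^{2}]<+\infty$ and $E[\Vert\partial_{x}\hat{X}(T)\Vert_{L^{2}}^{2}]<+\infty$, so $\mathcal{S}_{T}(\hat{X}(T))<+\infty$ almost surely and $e^{-\mathcal{S}_{T}(\hat{X}(T))}>0$ almost surely; being the expectation of a strictly positive random variable, $\Psi(T)>0$. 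This already proves the theorem with $G_{T}:=-\log\Psi(T)$.

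To present $G_{T}$ in a more explicit form and to make $T\mapsto\Psi(T)$ transparent, I would additionally apply It\^o's formula to $t\mapsto\exp(-\mathcal{S}_{T}(X(\cdot,t)))$ along the solution of~\eqref{eq:2.4} with the renormalized parameters, inserting the expansions of $d\Vert X(t)\Vert_{L^{2}}^{2}$ and $d\Vert\partial_{x}X(t)\Vert_{L^{2}}^{2}$ from the proofs of Theorems~\ref{thm:3.2} and~\ref{thm:diffestima} (the latter through Equation~\eqref{eq:6.2}), together with the It\^o corrections $Tr(Q_{\rho})$ and $\Vert Q_{\partial\rho}^{\frac{1}{2}}\Vert_{2}^{2}$. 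Truncating with the stopping times $\tau_{N}=\inf\{t:\Vert\partial_{x}X(t)\Vert_{L^{2}}>N\}$ of Lemma~\ref{lem:opera} turns the stochastic integral into a genuine martingale, and since the integrand is bounded by $1$ one passes to the limit $N\to+\infty$ by dominated convergence to obtain a representation
\begin{align*}
	\Psi(T)=e^{-\mathcal{S}_{T}(u)}+\int_{0}^{T}E\!\left[e^{-\mathcal{S}_{T}(X(s))}\,\mathcal{R}_{T}(X(s))\right]ds,
\end{align*}
with $\mathcal{R}_{T}$ an explicit expression in $X(s),\partial_{x}X(s),\partial_{xx}X(s),b,b'$ and the noise traces, confirming once more that $\Psi(T)$ is finite and positive.

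The step I expect to require the most care is this last It\^o computation, namely the term $-\tfrac{K(T)}{2M^{2}}\Vert\partial_{xx}X(s)\Vert_{L^{2}}^{2}$ produced by $\langle\partial_{x}X,\Delta\partial_{x}X\rangle$: it is not a priori integrable, but it enters with a favourable negative sign multiplying the bounded factor $e^{-\mathcal{S}_{T}(X(s))}$, so the stopping-time truncation together with monotone/dominated convergence allows one to keep control of it (or to discard it in the limit) without harming the identity. If one forgoes the explicit representation, the proof collapses to the finiteness-and-positivity argument above, whose only input beyond~\eqref{eq:ronrel} is the almost-sure $H^{1}$-regularity of $\hat{X}(T)$ established in Section~\ref{sec:3}; this is the route I would take as the main line, with the It\^o representation relegated to a remark.
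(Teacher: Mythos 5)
Your proposal is correct and takes essentially the same approach as the paper: show $0<\Psi(T)\le 1$ under the renormalization relation and set $G_{T}=-\log\Psi(T)$, so that $\mathcal{Z}(T)$ is constant in $T$. The only minor difference is that the paper gets strict positivity via Jensen's inequality combined with the moment bounds of Theorems \ref{thm:3.2} and \ref{thm:diffestima} (yielding an explicit positive lower bound and $G_{T}\ge 0$), whereas you argue from the almost-sure finiteness of the exponent; your supplementary It\^o-formula representation does not appear in the paper and is not needed.
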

	\begin{proof}
		Recall that the renormalization relation~\eqref{eq:ronrel}~in Theorem \ref{thm:4.8} is determined in the limit as $T\rightarrow+\infty$. In fact, it is equivalent to choosing 
		 $K=\frac{1}{1+T^{\kappa}},\kappa>0$ in the renormalization relation. Let
		 \begin{align*}
		 	g(T):=E[e^{-\frac{K(T)}{2}\Vert \partial_{x} \hat{X}(T)\Vert _{L^{2}}^{2}-\frac{\gamma(T)-2K(T)}{2}\Vert \hat{X}(T)\Vert _{L^{2}}^{2}}].
		 \end{align*}
		  Then, on the one hand, we have
		\begin{align*}
			g(T)=E[e^{-\frac{K(T)}{2}\Vert \partial_{x} \hat{X}(T)\Vert _{L^{2}}^{2}-\frac{\gamma(T)-2K(T)}{2}\Vert \hat{X}(T)\Vert _{L^{2}}^{2}}]\leq 1.
		\end{align*}
		On the other hand, by Jessen's inequality
		\begin{align*}
			g(T)=E[e^{-\frac{K(T)}{2}\Vert \partial_{x} \hat{X}(T)\Vert _{L^{2}}^{2}-\frac{\gamma(T)-2K(T)}{2}\Vert \hat{X}(T)\Vert _{L^{2}}^{2}}]\geq e^{-\frac{K(T)}{2}E[\Vert \partial_{x} \hat{X}(T)\Vert _{L^{2}}^{2}]-\frac{\gamma(T)-2K(T)}{2}E[\Vert \hat{X}(T)\Vert _{L^{2}}^{2}]}.
		\end{align*}
		Under the renormalization relation~\eqref{eq:ronrel}, and by virtue of the estimates in Theorems \ref{thm:3.2}, \ref{thm:diffestima}, as well as~\eqref{eq:trespt}, we know that 
		\begin{align*}
			E[\Vert \hat{X}(T) \Vert^{2}_{L^{2}}]&\leq\Vert u\Vert _{L^{2}}^{2}e^{C_{1}T}+\frac{Tr(Q_{\rho})}{C_{1}}(e^{C_{1}T}-1)\\
			&\leq\Vert u\Vert _{L^{2}}^{2}e^{2T(1-n^{2}\gamma^{*}+T+\frac{2}{T^{\kappa}+1})}+\frac{T(e^{2T(1-n^{2}\gamma^{*}+T+\frac{2}{T^{\kappa}+1})}-1)}{24(1-n^{2}\gamma^{*}+T+\frac{2}{T^{\kappa}+1})},\\
				E[\Vert\partial_{x} \hat{X}(T)\Vert _{L^{2}}^{2}]
				&\leq \Vert\partial_{x} u\Vert _{L^{2}}^{2}+\Vert Q_{\partial\rho}^{\frac{1}{2}}\Vert^{2}_{2}T\leq\Vert\partial_{x} u\Vert _{L^{2}}^{2}+\frac{T^{3}}{48}.
		\end{align*}
		Then
		\begin{align*}
			g(T)&\geq e^{-\frac{K(T)}{2}E[\Vert \partial_{x}\hat{X}(T)\Vert _{L^{2}}^{2}]-\frac{\gamma(T)-2K(T)}{2}E[\Vert \hat{X}(T)\Vert _{L^{2}}^{2}]}\\
			&\geq e^{-\frac{1}{2T^{\kappa}+2}\left(\Vert\partial_{x} u\Vert _{L^{2}}^{2}+\frac{T^{3}}{48}\right)-\frac{n^{2}\gamma^{*}(T^{\kappa}+1)-2}{2T^{\kappa}+2}\left(\Vert u\Vert _{L^{2}}^{2}e^{2T(1-n^{2}\gamma^{*}+T+\frac{2}{T^{\kappa}+1})}+\frac{Te^{2T(1-n^{2}\gamma^{*}+T+\frac{2}{T^{\kappa}+1})}-T}{24(1-n^{2}\gamma^{*}+T+\frac{2}{T^{\kappa}+1})}\right)}
			>0.
		\end{align*}
		Therefore, we can choose $G_{T}=-\log g(T)\geq 0$ such that the partition function $\mathcal{Z}(T)$ remains invariant.
	\end{proof}
	
	\subsection{The long time behavior of correlation functions}\label{sec:5.2}
	For the 1D Ising model, every initial point with a finite coupling parameter $K$ will flow toward the stable fixed point $K^{*}=0$ of the RG transformation under the renormalization procedure, and the two point correlation functions of the 1D Ising model at $K^{*}$ are equal to 0, see p.551 in \cite{Morandi2001document} for example. In this section, we shall need to investigate the two point correlation functions of SPDE on lattices, and prove that they will converge to the two point correlation functions of the 1D Ising model at the stable fixed point of the RG transformation as $T\rightarrow +\infty$ under the renormalization relation.
	\begin{theorem}\label{thm:6.3}
		The correlation functions satisfy that for any $x_{1},x_{2}\in(0,1)$
		\begin{align*}
			\vert \textbf{Cov}(X(x_{1},T);X(x_{2},T))\vert^{2}\leq & \left(\frac{e^{(1-\gamma-\varepsilon+4K)T}-1}{1-\gamma-\varepsilon+4K}\right)^{2}\frac{n^{4}}{\hat{c}^{2}(\rho)}\mathcal{K}(x_{1},x_{1})\mathcal{K}(x_{2},x_{2}),
		\end{align*}
	where $\hat{c}(\rho)$ is defined in Lemma \ref{lem:lowerbound}.
	\end{theorem}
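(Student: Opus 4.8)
The plan is to derive the covariance bound directly from the Poincar\'e inequality in Corollary~\ref{cor:poinc} combined with the Cauchy--Schwarz inequality for covariances. The key observation is that, for the fixed lattice points $x_{1},\cdots,x_{n}\in(0,1)$ entering $\hat{\mathcal{K}}$ and $\hat{c}(\rho)$, both $X(x_{1},T)$ and $X(x_{2},T)$ can be viewed as (limits of) smooth cylindrical functions $F(X)=f(X(x_{1},T),\cdots,X(x_{n},T))$ by taking $f$ to be the coordinate projection onto the first, respectively the second, slot, so that $\nabla_{k}f=\tilde{\delta}_{k1}$, respectively $\tilde{\delta}_{k2}$. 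Since the projection is unbounded, the first step is a routine truncation argument: multiply $f$ by a smooth cutoff, apply Corollary~\ref{cor:poinc}, and pass to the limit. This is justified by the second moment estimates of Theorem~\ref{thm:3.2} and Theorem~\ref{thm:diffestima}, which, together with the a.s.\ spatial smoothness from Theorem~\ref{thm:61} and the one-dimensional Sobolev embedding $H^{1}\hookrightarrow C^{1/2}$, also guarantee that $E[X(x_{i},T)^{2}]<+\infty$, so that the covariance is well defined.

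Second, applying Corollary~\ref{cor:poinc} to $F(X)=X(x_{1},T)$ collapses the double sum to the single diagonal term, yielding
\begin{align*}
	\textbf{Var}(X(x_{1},T))\leq \frac{e^{(1-\gamma-\varepsilon+4K)T}-1}{1-\gamma-\varepsilon+4K}\,\frac{n^{2}}{\hat{c}(\rho)}\,\mathcal{K}(x_{1},x_{1}),
\end{align*}
and likewise with $x_{1}$ replaced by $x_{2}$. Third, the elementary inequality $|\textbf{Cov}(A,B)|\leq\sqrt{\textbf{Var}(A)\,\textbf{Var}(B)}$ with $A=X(x_{1},T)$ and $B=X(x_{2},T)$ gives
\begin{align*}
	|\textbf{Cov}(X(x_{1},T);X(x_{2},T))|\leq \frac{e^{(1-\gamma-\varepsilon+4K)T}-1}{1-\gamma-\varepsilon+4K}\,\frac{n^{2}}{\hat{c}(\rho)}\,\sqrt{\mathcal{K}(x_{1},x_{1})\,\mathcal{K}(x_{2},x_{2})},
\end{align*}
and squaring both sides produces exactly the claimed estimate.

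The only genuine subtlety is the approximation step needed to apply the Poincar\'e inequality to the unbounded coordinate functionals; everything else is a direct substitution followed by one use of Cauchy--Schwarz. One could alternatively bypass the approximation entirely by writing the Clark--Ocone--Haussmann representations of Theorem~\ref{thm:finiteresp} for $F_{1}=X(x_{1},T)$ and $F_{2}=X(x_{2},T)$, evaluating the covariance through the It\^o isometry, and estimating the resulting sum of time integrals by Cauchy--Schwarz together with the operator bound~\eqref{eq:opestimat}; this route reproduces the same constant and may be the cleaner way to present the argument if the truncation is felt to be delicate.
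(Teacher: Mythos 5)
Your proposal follows exactly the paper's argument: apply the Poincar\'e inequality of Corollary~\ref{cor:poinc} to the coordinate functionals $F_{i}(X)=X(x_{i},T)$ (so the double sum reduces to the diagonal term $\mathcal{K}(x_{i},x_{i})$), then conclude by the Cauchy--Schwarz bound $|\textbf{Cov}(A,B)|^{2}\leq\textbf{Var}(A)\,\textbf{Var}(B)$ and squaring. The extra truncation/integrability discussion is harmless additional care (the paper simply takes $F_{i}(X)=X(x_{i},T)$ directly, since cylindrical functions only require $f$ smooth), so the proof is correct and essentially identical to the paper's.
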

	
	\begin{proof}
		Obviously, by Cauchy inequality
		\begin{align*}
			\vert\textbf{Cov}(X(x_{1},T);X(x_{2},T))\vert^{2}&\leq\textbf{Var}(X(x_{1},T))\textbf{Var}(X(x_{2},T)).
		\end{align*}
		Hence by taking the cylindrical functions  $F_{i}(X)=X(x_{i},T)$ for $i=1,2$ in Theorem \ref{thm:logso}, and according to Corollary \ref{cor:poinc}, we have the following Poincar\'e inequalities 
		\begin{align*}
			\textbf{Var}(X(x_{i},T))
			\leq&\frac{e^{(1-\gamma-\varepsilon+4K)T}-1}{1-\gamma-\varepsilon+4K}\frac{n^{2}}{\hat{c}(\rho)}\mathcal{K}(x_{i},x_{i}),i=1,2.
		\end{align*}
		Then
			\begin{align*}
			\vert \textbf{Cov}(X(x_{1},T);X(x_{2},T))\vert^{2}\leq & \left(\frac{e^{(1-\gamma-\varepsilon+4K)T}-1}{1-\gamma-\varepsilon+4K}\right)^{2}\frac{n^{4}}{\hat{c}^{2}(\rho)}\mathcal{K}(x_{1},x_{1})\mathcal{K}(x_{2},x_{2}).
		\end{align*}
	\end{proof}
	We have obtained the renormalization relation and the estimates for the correlation functions of SPDE. Now, we return to the lattices within the interval $(-M,M)$ where $M$ will tend to infinity, and investigate the long time behavior of the two point correlation functions of SPDE on lattices as the terminal time $T\rightarrow+\infty$.
	\begin{corollary}
		If we choose the renormalization relation as follows
		\begin{align}\label{eq:renrela}
			K(T)=\frac{1}{T^{\kappa}},\kappa>0,
			\rho(T)=\frac{1}{T},\gamma^{*}(T)=T,
			\varepsilon(T)=\frac{1}{T},\delta(T)=\frac{1}{T},M(T)=T^{\frac{1}{3}}.
		\end{align}
		Let $\hat{Y}(k,T)$ denote the solution to SPDE~\eqref{eq:2.3}~at the lattice site $k\in\mathbb{Z}$, where the parameters satisfy the renormalization relation~\eqref{eq:renrela}~.
		The two point correlation functions on lattices satisfy
		\begin{align*}
			\textbf{Cov}(\hat{Y}(l,T);\hat{Y}(k,T))&\rightarrow 0,\ as\ T\rightarrow +\infty,
		\end{align*}
		where $l<k\in\mathbb{Z}$.
	\end{corollary}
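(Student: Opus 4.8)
The plan is to substitute the renormalization relation~\eqref{eq:renrela} into the estimate of Theorem~\ref{thm:6.3} and let $T\to+\infty$. First I would pass between the two equations: since $\hat Y$ solves SPDE~\eqref{eq:2.3} on $(-M(T),M(T))$ and SPDE~\eqref{eq:2.4} is obtained from it by the scaling $x=(y+M)/(2M)$, one has $\hat Y(k,T)=\hat X(x_k(T),T)$ with $x_k(T)=\tfrac{k+M(T)}{2M(T)}=\tfrac{1}{2}+\tfrac{k}{2T^{1/3}}$ (and similarly for $l$). Consequently $x_l(T),x_k(T)\to\tfrac{1}{2}$ as $T\to+\infty$, so for all $T$ large enough (depending only on the fixed integers $l<k$) both points lie in the compact subinterval $[\tfrac{1}{4},\tfrac{3}{4}]\subset(0,1)$, and $|x_k(T)-x_l(T)|=\tfrac{k-l}{2T^{1/3}}$. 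Then $\textbf{Cov}(\hat Y(l,T);\hat Y(k,T))=\textbf{Cov}(\hat X(x_l(T),T);\hat X(x_k(T),T))$, and I would apply Theorem~\ref{thm:6.3} with $x_1=x_l(T)$, $x_2=x_k(T)$; note that the bound there carries no explicit $M$, so the only effect of the choice $M(T)=T^{1/3}$ is to control where these two points sit.

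Next I would estimate the three factors in the bound of Theorem~\ref{thm:6.3} under~\eqref{eq:renrela}, i.e.\ with $\gamma=n^2\gamma^*(T)=n^2T$, $\varepsilon(T)=1/T$, $K(T)=T^{-\kappa}$, $\rho(T)=1/T$, where $n$ is the fixed integer from the relation $\gamma=n^2\gamma^*$. Writing $a(T):=1-\gamma-\varepsilon+4K=1-n^2T-T^{-1}+4T^{-\kappa}$, which is negative with $|a(T)|\ge\tfrac{1}{2}n^2T$ for $T$ large, the elementary fact that $\tfrac{e^{aT}-1}{a}=\tfrac{1-e^{aT}}{|a|}\in[0,\tfrac{1}{|a|}]$ when $a<0$ gives $\bigl(\tfrac{e^{a(T)T}-1}{a(T)}\bigr)^2\le\tfrac{4}{n^4T^2}$. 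For $\hat c(\rho)$ I would use the lower bound~\eqref{eq:uplower} (equivalently Lemma~\ref{lem:lowerbound}), $\hat c(\rho)\ge\tfrac{1}{\sqrt{8\pi\rho}}-\max_i\hat C_{x_i}\sqrt\rho$; since $x_l(T),x_k(T)\in[\tfrac{1}{4},\tfrac{3}{4}]$ the boundary term $C_{x_i}=2(x_i^{-2}+(x_i-1)^{-2})+\tfrac{2}{3}\pi^2$ stays bounded, while $\tfrac{1}{(x_k(T)-x_l(T))^2}=\tfrac{4T^{2/3}}{(k-l)^2}$, so by~\eqref{eq:hatc} $\max_i\hat C_{x_i}\le C_1(l,k)\,T^{2/3}$ and hence $\max_i\hat C_{x_i}\sqrt\rho\le C_1T^{1/6}=o\bigl(\tfrac{1}{\sqrt{8\pi\rho}}\bigr)$; therefore $\hat c(\rho)\ge\tfrac{1}{2}\sqrt{\tfrac{T}{8\pi}}$ eventually, so $\tfrac{n^4}{\hat c(\rho)^2}\le\tfrac{32\pi n^4}{T}$. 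Finally, by~\eqref{eq:cx} together with $x_i\in[\tfrac{1}{4},\tfrac{3}{4}]$, $\mathcal K(x_i,x_i)\le\tfrac{1}{\sqrt{8\pi\rho}}+\tfrac{C_{x_i}}{\sqrt{8\pi}}\sqrt\rho\le C_2\sqrt T$ for $i=1,2$.

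Multiplying these three bounds into Theorem~\ref{thm:6.3} would give $|\textbf{Cov}(\hat Y(l,T);\hat Y(k,T))|^2\le\tfrac{4}{n^4T^2}\cdot\tfrac{32\pi n^4}{T}\cdot C_2^2T=\tfrac{128\pi C_2^2}{T^2}$, whence $\textbf{Cov}(\hat Y(l,T);\hat Y(k,T))\to0$ as $T\to+\infty$, which agrees with the vanishing two point correlation function of the 1D Ising model at the stable fixed point $K^*=0$ of the RG transformation. I expect the main obstacle to be the estimate for $\hat c(\rho)$: one has to check that the Gersgorin-type correction $\max_i\hat C_{x_i}\sqrt\rho$ is of strictly lower order than the leading diagonal value $(8\pi\rho)^{-1/2}$, despite the two relevant lattice points being only $O(T^{-1/3})$ apart, which makes $(x_k(T)-x_l(T))^{-2}$ as large as $O(T^{2/3})$. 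This is precisely where the joint scaling $M(T)=T^{1/3}$, $\rho(T)=1/T$ is needed, since $T^{2/3}\cdot T^{-1/2}=T^{1/6}\ll T^{1/2}$; it is also worth noting that the factor $n^4$ in Theorem~\ref{thm:6.3} causes no trouble, being both fixed and cancelled by the $n^4T^2$ growth of $|1-\gamma-\varepsilon+4K|^2$ produced by $\gamma=n^2\gamma^*(T)$.
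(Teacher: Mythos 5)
Your proposal is correct and follows essentially the same route as the paper's proof: transfer $\hat Y(l,T),\hat Y(k,T)$ to $\hat X(x_1,T),\hat X(x_2,T)$ via the scaling $x=(y+M)/(2M)$, apply Theorem \ref{thm:6.3} under the relation \eqref{eq:renrela}, and control $\hat c(\rho)$ and $\mathcal K(x_i,x_i)$ through \eqref{eq:cx}, \eqref{eq:hatc} and \eqref{eq:uplower}, where the key point in both arguments is that $\max_i\hat C_{x_i}\sqrt\rho\sim T^{1/6}$ stays of lower order than $(8\pi\rho)^{-1/2}\sim T^{1/2}$ thanks to $M(T)=T^{1/3}$, $\rho(T)=1/T$. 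Your explicit factor-by-factor bookkeeping even recovers the same $O(1/T)$ decay of the covariance that the paper's limit computation yields.
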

	\begin{remark}
		This result implies that when the coupling parameter $K$ tends to the stable fixed point of the RG transformation of the 1D Ising model, the two point correlation functions of SPDE on lattices also converge to the two point correlation functions of the 1D Ising model at the stable fixed point of the RG transformation as $T\rightarrow+\infty$. Moreover, according to statistical mechanics theory, see the first paragraph of Section 2.3 on p.391 in \cite{Deligne1999document}, the measure induced by SPDE on lattices will converge to the ensemble measure of the 1D Ising model at the stable fixed point of the RG transformation. This demonstrates that using the constant in the log-Sobolev inequality to determine the renormalization relation is an effective method.
	\end{remark}
	\begin{proof}
		For $l<k\in\mathbb{Z}\cap[-M,M]$, let $x_{1}=\frac{l+M}{2M}$ and $x_{2}=\frac{k+M}{2M}$.
		Recall that $\hat{Y}(l,T)=\hat{X}(\frac{l+M}{2M},T)=\hat{X}(x_{1},T)$ and $\hat{Y}(k,T)=\hat{X}(\frac{k+M}{2M},T)=\hat{X}(x_{2},T)$, then
		\begin{align*}
			\vert \textbf{Cov}(\hat{Y}(l,T);\hat{Y}(k,T))\vert=\vert \textbf{Cov}(\hat{X}(x_{1},T);\hat{X}(x_{2},T))\vert,
		\end{align*}
		and the constants $C_{x_{1}},C_{x_{2}},\hat{C}_{x_{1}},\hat{C}_{x_{2}}$ in Theorem \ref{thm:4.8} satisfy 
		\begin{align*}
			C_{x_{1}}=&2\left((\frac{2M}{l+M})^{2}+(\frac{2M}{M-l})^{2}\right)+\frac{2\pi^{2}}{3},\\
			C_{x_{2}}=&2\left((\frac{2M}{k+M})^{2}+(\frac{2M}{M-k})^{2}\right)+\frac{2\pi^{2}}{3},\\
			\hat{C}_{x_{1}}=&\frac{C_{x_{1}}}{\sqrt{8\pi}}+\frac{4M^{2}}{(l-k)^{2}}\sqrt{\frac{8}{\pi}}+\frac{\sqrt{2}\pi^{3/2}}{6},\\
			\hat{C}_{x_{2}}=&\frac{C_{x_{2}}}{\sqrt{8\pi}}+\frac{4M^{2}}{(l-k)^{2}}\sqrt{\frac{8}{\pi}}+\frac{\sqrt{2}\pi^{3/2}}{6}.
		\end{align*}
		Choosing the renormalization relation as follows
		\begin{align*}
			K(T)=\frac{1}{T^{\kappa}},\kappa>0,
			\rho(T)=\frac{1}{T},\gamma^{*}(T)=T,
			\varepsilon(T)=\frac{1}{T},\delta(T)=\frac{1}{T},M(T)=T^{\frac{1}{3}}.
		\end{align*}
		By~\eqref{eq:cx},~\eqref{eq:uplower}~and Theorem \ref{thm:6.3}, we have
		\begin{align*}
			&\lim\limits_{T\rightarrow+\infty}\vert \textbf{Cov}(\hat{Y}(l,T);\hat{Y}(k,T))\vert\\
			&\leq\lim\limits_{T\rightarrow+\infty}\frac{e^{(1-\gamma-\varepsilon+4K)T}-1}{1-\gamma-\varepsilon+4K}\frac{n^{2}}{\hat{c}(\rho)}\sqrt{\mathcal{K}(x_{1},x_{1})\mathcal{K}(x_{2},x_{2})}\\
			&\leq\lim\limits_{T\rightarrow+\infty}\frac{Tn^{2}\left(e^{T-n^{2}T^{2}-1+4T^{1-\kappa}}-1\right)}{T-n^{2}T^{2}-1+4T^{1-\kappa}}\frac{\sqrt{(T+C_{x_{1}})(T+C_{x_{2}})}}{T-\sqrt{8\pi} \max\{\hat{C}_{x_{1}},\hat{C}_{x_{2}}\}}=0.
		\end{align*}

	\end{proof}
	
	\section{Acknowledgements}
	This work was supported by National Natural Science Foundation of China (Grant No.12288201).

\end{document}